\documentclass[a4paper,11pt]{article}
\pdfoutput=1



%
%

\usepackage{mymacros,oitp}
\usepackage{asymptote}

%
%

\newcommand{\dge}{\rotatebox[origin=c]{45}{$\ge$}}
\newcommand{\uge}{\rotatebox[origin=c]{315}{$\ge$}}

\newcommand{\dndots}{\rotatebox[origin=c]{315}{$\cdots$}}

\newcommand{\gcbox}[1]{\hbox to1.65em{$\hfill {#1} \hfill$}}
\newcommand{\ldbox}[1]{\hbox to2.2em{$\hfill {#1} \hfill$}}

\newcommand{\GITl}{\backslash \!\! \backslash}
\newcommand{\GIT}{/\!\!/}

%
%

\title{Potential functions on Grassmannians of planes\\
and cluster transformations}
\author{Yuichi Nohara, Kazushi Ueda}
\date{}
\pagestyle{plain}

%
%

\begin{document}

\maketitle

\begin{abstract}
With a triangulation of a planar polygon with $n$ sides,
one can associate an integrable system on the Grassmannian of 2-planes
in an $n$-space.
In this paper,
we show that the potential functions
of Lagrangian torus fibers of the integrable systems
associated with different triangulations
glue together
by cluster transformations.
We also prove that the cluster transformations coincide with
the wall-crossing formula 
in Lagrangian intersection Floer theory.
\end{abstract}

\section{Introduction}

Quantum cohomologies of Grassmannians
give quantum deformations of the classical Schubert calculus.
It is a fascinating subject,
which is related to many branch of mathematics
such as
moduli of vector bundles on a Riemann surface
\cite{MR1358625},
total positivity
\cite{MR1869115},
and eigenvalue problems
\cite{MR2008438}
to name a few.

Mirror symmetry is a powerful tool
to study quantum cohomologies of symplectic manifolds.
The mirror of a Fano manifold
is a \emph{Landau--Ginzburg model},
i.e.,
a pair $(\Xv, W)$
of an analytic space $\Xv$
and an analytic function
$W \colon \Xv \to \bA^1$
called the \emph{Landau--Ginzburg potential}.
%
Landau--Ginzburg mirrors of flag varieties
are introduced in \cite{MR2397456},
where $\Xv$ are the complements
of anti-canonical divisors
in the flag manifolds
associated with the Langlands dual groups,
and $W$ are regular functions.
In the type $A$ cases,
the restrictions of $W$ to certain open subvarieties 
give mirrors introduced earlier in
\cite{MR1439892,MR1756568}.
For the Grassmannian $\Gr(k,n) = \Gr(k, \bC^n)$ 
of $k$-dimensional subspaces in $\bC^n$, 
Marsh and Rietsch \cite{1307.1085} give a description of
the Landau--Ginzburg mirror
\begin{equation}
  W \colon \Xv = \Gr(n-k, (\bC^n)^*) \setminus D 
  \longrightarrow \bA^1
\end{equation}
in terms of Pl\"ucker coordinates
on the dual Grassmannian $\Gr(n-k, (\bC^n)^*)$
of $\Gr(k,\bC^n)$.

With a Lagrangian submanifold of a symplectic manifold,
one can associate the \emph{potential function},
which is a Floer-theoretic quantity
obtained as the generating function
of numbers of pseudo-holomorphic disks
bounded by Lagrangian submanifolds
\cite{MR2553465}.
In the case of toric manifolds,
the potential functions of Lagrangian orbits of the torus action
can be identified with the Landau--Ginzburg potentials
of the mirrors.

In contrast to the toric cases
where the toric moment maps
give canonical Lagrangian torus fibrations,
there are a priori no preferred Lagrangian torus fibrations
on flag manifolds.
In the case of the Grassmannian $\Gr(2,n)$ of 2-planes,
with any triangulation $\Gamma$ of 
a convex polygon with $n$ sides,
one can associate a completely integrable system 
\begin{equation}
  \Psi_{\Gamma} \colon \Gr(2,n) \longrightarrow 
  \bR^{2n-4}
\end{equation}
whose image 
$\Delta_{\Gamma} = \Psi_{\Gamma}(\Gr(2,n))$ is a convex polytope.
Note that the number of ways
to triangulate a convex $n$-gon is 
given by the Catalan number
$C_{n-2} = \frac{1}{n-1}\binom{2n-4}{n-2}$.
The potential function of Lagrangian torus fibers
of the integrable system $\Psi_{\Gamma}$
is computed in \cite[Theorem 1.6]{MR3211821}.
It is written as a Laurent polynomial
$W_\Gamma$,
which gives a regular function
on a torus $(\bG_m)^{2n-4}$.

If two triangulations $\Gamma$ and $\Gamma'$
are related by a Whitehead move
(see Figure \ref{fg:flip1}),
the corresponding potential functions
$W_{\Gamma}$ and $W_{\Gamma'}$ are related
by a subtraction-free birational change of variables
of the form
\begin{align}
  W_{\Gamma'}(\ldots, y', y_1, y_2, y_3, y_4, \ldots)
 = W_{\Gamma}(\ldots, y, y_1, y_2, y_3, y_4, \ldots)
\end{align}
where
\begin{gather} \label{eq:coord_change}
 y' = \frac{1}{y} \cdot \frac{y_1 y_2 y_3 y_4}{y_1 y_3 + y_2 y_4}.
\end{gather}
Moreover, the tropicalization of this coordinate change 
gives a piecewise-linear transformation on $\bR^{2n-4}$
which maps $\Delta_{\Gamma}$ into $\Delta_{\Gamma'}$.

In what follows we identify the dual Grassmannian 
$\Gr(n-2, (\bC^n)^*)$ with $\Gr(2,n)$ in a canonical way,
and write the Pl\"ucker coordinates as $p_{ij}$
($1 \le i < j \le n$).
The first main result in this paper is the following:

\begin{theorem} \label{th:main}
For any triangulation $\Gamma$,
there is an open embedding
\begin{align}
 \iota_\Gamma \colon \lb \bGm \rb^{2n-4} \hookrightarrow \Xv
\end{align}
such that the restriction of the Landau--Ginzburg potential
coincides with the potential function;
$
 \iota_\Gamma^* W = W_\Gamma.
$
The change of variables
\eqref{eq:coord_change}
can be identified with the Pl\"ucker relation
\begin{align} \label{eq:pluecker}
 p_{ik} p_{jl} = p_{ij} p_{kl} + p_{il} p_{jk}
\end{align}
by a suitable choice of a coordinate on $\lb \bGm \rb^{2n-4}$.
\end{theorem}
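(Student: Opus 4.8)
The plan is to read off the embeddings $\iota_\Gamma$ from the cluster structure on the Pl\"ucker coordinate ring of $\Gr(2,n)$, which is the cluster algebra of type $A_{n-3}$ whose seeds are indexed by the triangulations of the $n$-gon: the cluster variable attached to a diagonal $\{i,k\}$ is $p_{ik}$, and the $n$ frozen variables are the Pl\"ucker coordinates $p_{i,i+1}$ of the sides. Under the identification $\Gr(n-2,(\bC^n)^*)\cong\Gr(2,n)$, the boundary divisor $D$ of the Marsh--Rietsch model \cite{1307.1085} is the union $\bigcup_{i}\{p_{i,i+1}=0\}$ of the frozen divisors, so $\Xv$ is covered by the open cluster tori. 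For a fixed triangulation $\Gamma$, the locus of $\Gr(2,n)$ on which the $2n-3$ Pl\"ucker coordinates indexed by the edges of $\Gamma$ are all nonzero is, after quotienting by the overall rescaling, an algebraic torus sitting inside $\Xv$; with the $2n-4$ coordinates on it chosen as described in the third paragraph below, this is the map $\iota_\Gamma\colon(\bGm)^{2n-4}\hookrightarrow\Xv$, and it is an open embedding because, by the basic theory of cluster algebras, every Pl\"ucker coordinate is a Laurent polynomial in the variables of any single seed.

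To prove $\iota_\Gamma^{*}W=W_\Gamma$ one restricts the Marsh--Rietsch expression for $W$ to the cluster chart $\iota_\Gamma$, rewrites it in the Pl\"ucker coordinates of the seed $\Gamma$, and matches it term by term with \cite[Theorem 1.6]{MR3211821}, each monomial of $W_\Gamma$ occurring as a ratio of products of Pl\"ucker coordinates inside $W$. It is enough to do this for one triangulation and then propagate: for a general $\Gamma$ connected to a base triangulation $\Gamma_0$ by a chain of Whitehead moves, $\iota_\Gamma$ and $\iota_{\Gamma_0}$ are charts of the same space $\Xv$ whose transition is the corresponding composite $\mu$ of cluster mutations, so $\iota_\Gamma^{*}W=\mu^{*}\iota_{\Gamma_0}^{*}W=\mu^{*}W_{\Gamma_0}$ (as rational functions, hence as Laurent polynomials), and $\mu^{*}W_{\Gamma_0}=W_\Gamma$ by the effect of Whitehead moves on $W_\Gamma$ recorded in the introduction. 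A natural base case is the ``fan'' triangulation $\Gamma_0$ joining one vertex to all the others, for which $\Psi_{\Gamma_0}$ is the Gelfand--Cetlin system and $W_{\Gamma_0}$ the Gelfand--Cetlin potential, so that $\iota_{\Gamma_0}^{*}W=W_{\Gamma_0}$ becomes an explicit comparison of two known formulas.

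For the second assertion, a Whitehead move replaces the diagonal $\{i,k\}$ of $\Gamma$ by the diagonal $\{j,l\}$ of $\Gamma'$, where $i,j,k,l$ in cyclic order are the vertices of the quadrilateral cut out by the two triangles of $\Gamma$ adjacent to $\{i,k\}$, and it alters a single cluster variable through its exchange relation, which is exactly the three-term Pl\"ucker relation \eqref{eq:pluecker}, $p_{ik}p_{jl}=p_{ij}p_{kl}+p_{il}p_{jk}$. Dividing \eqref{eq:pluecker} by the product $p_{ij}p_{jk}p_{kl}p_{il}$ of the Pl\"ucker coordinates of the four edges of the quadrilateral turns it into $\frac{1}{yy'}=\frac{1}{y_1y_3}+\frac{1}{y_2y_4}$, which is precisely \eqref{eq:coord_change}, once one sets $y_1,y_2,y_3,y_4$ equal to $p_{ij},p_{jk},p_{kl},p_{il}$ (the four edges are common to $\Gamma$ and $\Gamma'$, hence are coordinates of both charts), takes the coordinate $y$ dual to $\{i,k\}$ to be a monomial in those edge coordinates divided by $p_{ik}$ --- for instance $y=p_{ij}p_{il}/p_{ik}$, so that $y'=p_{jk}p_{kl}/p_{jl}$ on the $\Gamma'$-chart. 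Declaring such ratios, together with appropriate monomials in the frozen coordinates in the remaining directions, to be the coordinates on each torus $(\bGm)^{2n-4}$ is the ``suitable choice'' of the statement; it makes the transition between $\iota_\Gamma$ and $\iota_{\Gamma'}$ equal to \eqref{eq:coord_change} on the nose and identifies that change of variables with the Pl\"ucker relation \eqref{eq:pluecker}.

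I expect the explicit comparison in the second paragraph to be the main obstacle: the Marsh--Rietsch formula for $W$ is written in a fixed reference chart not visibly adapted to any triangulation, so verifying that its restriction equals the combinatorially defined Laurent polynomial of \cite[Theorem 1.6]{MR3211821} exactly --- not merely up to an automorphism of the torus --- requires making the dictionary between the two coordinate systems completely explicit and keeping careful track of the frozen variables. A secondary, bookkeeping-level point is to check that the monomial coordinates introduced for a single flip in the third paragraph can be chosen simultaneously on all the cluster tori and coincide with those in which $W_\Gamma$ is written in \cite{MR3211821}, so that the change of variables recorded there is \eqref{eq:coord_change} with no residual rescaling.
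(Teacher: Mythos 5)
Your proposal follows essentially the same route as the paper's proof of Theorem \ref{th:cluster_transf}: verify $\iota_\Gamma^*W=W_\Gamma$ directly for the caterpillar (fan/Gelfand--Cetlin) triangulation by comparing \eqref{eq:po_caterpillar} with the quiver formula \eqref{eq:ladder}--\eqref{eq:BCKv}, propagate to all triangulations through Whitehead moves via Proposition \ref{pr:geom-lift}, and turn the flip into the Pl\"ucker relation by clearing denominators; the ``suitable choice of coordinate'' whose explicit form you defer as the main obstacle is exactly the paper's chart \eqref{eq:emb_U}, namely $y_{ij}=p_{j,j+1}/p_{ij}$ with the twist $y_{in}=q\,p_{1n}/p_{in}$. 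The only inaccuracy is your side remark that the cluster tori cover $\Xv$: the paper notes after Theorem \ref{th:cluster_transf} that $\bigcup_\Gamma U_\Gamma$ is a proper subset of $\Xv$ already for $\Gr(2,4)$, but this is not needed for the theorem.
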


In other words,
the potential functions for different triangulations glue together
to form an open dense subset of Marsh--Rietsch's mirror.
The Pl\"ucker relation \pref{eq:pluecker}
is a prototypical example
of a \emph{cluster transformation}
in the theory of cluster algebras
\cite{MR1887642}.

\begin{remark}
Rietsch and Williams \cite{1507.07817} also study 
the relation between piecewise-linear transformations 
for ``moment polytopes'' $\Delta_{\Gamma}$
and cluster transformations \pref{eq:pluecker}
from a slightly different view point, where $\Delta_{\Gamma}$
are regarded as Newton--Okounkov bodies.
\end{remark}

For a pair $\Gamma$ and $\Gamma'$
of triangulations related by a Whitehead move,
one can construct a one-parameter family $\Psi_t$ 
($0 \le t \le 1$) of completely integrable systems 
on $\Gr(2,n)$ such that 
$\Psi_0 = \Psi_{\Gamma}$ and $\Psi_1 = \Psi_{\Gamma'}$
(up to coordinate changes on the base spaces). 
For $t \ne 0, 1$, the integrable system $\Psi_t$ has singular fibers
over a codimension two subset in the interior of the base space
$B_t= \Psi_t(\Gr(2,n))$.
The presence of singular fibers leads to a codimension one \emph{wall} in $B_t$
which divides $B_t$ into two \emph{chambers}.
The \emph{SYZ mirror}
in the sense of \cite[Definition 1.2]{MR3502098}
of $\Gr(2,n)$
with respect to this Lagrangian torus fibration
is given by gluing
(open subsets of)
Landau--Ginzburg models
$\lb \lb \bGm \rb^{2n-4}, W_\Gamma \rb$
and $\lb \lb \bGm \rb^{2n-4}, W_{\Gamma'} \rb$
(and then completing).
Each of these Landau--Ginzburg models
comes from the moduli space of objects
of the Fukaya category
supported by Lagrangian torus fibers
above each chamber,
and the gluing is given by
a \emph{wall-crossing formula}
obtained by counting
pseudo-holomorphic disks of Maslov index zero.
The second main result in this paper is the following:

\begin{theorem} \label{th:main2}
For $0 < t <1$, the wall-crossing formula
in the construction of the SYZ mirror of $\Gr(2,n)$
with respect to the Lagrangian torus fibration $\Psi_t$
is given by the coordinate change \pref{eq:coord_change}.
\end{theorem}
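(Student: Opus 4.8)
The plan is to reduce Theorem~\ref{th:main2} to a local statement near the wall, where only the four triangles involved in the Whitehead move contribute to the count of Maslov-index-zero disks. First I would set up the family $\Psi_t$ explicitly: for a Whitehead move replacing the diagonal $e$ of a quadrilateral (with vertices giving the four ``outer'' diagonals/edges corresponding to the variables $y_1,y_2,y_3,y_4$) by the diagonal $e'$, one blows up $\Gr(2,n)$ along an appropriate codimension-two center, or equivalently passes through a degeneration of the toric variety $X_{\Delta_\Gamma}$ to $X_{\Delta_{\Gamma'}}$ through a common blow-up, and equips the total space with a Lagrangian torus fibration $\Psi_t$ whose discriminant locus acquires a codimension-two stratum emanating from the interior; this is the standard picture of Auroux \cite{MR2386535}. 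The wall in $B_t$ is the locus of fibers bounding a nonconstant Maslov-zero disk, and it separates $B_t$ into two chambers on which the Fukaya-category moduli spaces are the two tori $(\bGm)^{2n-4}$ carrying $W_\Gamma$ and $W_{\Gamma'}$.

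The next step is to identify the wall-crossing transformation. Crossing the wall, a holomorphic disk of Maslov index zero bounded by the torus fiber gets counted, and the affine-coordinate (i.e., the $\bGm$-valued function recording the holonomy/symplectic area) jumps by a factor $1 + (\text{disk contribution})$. I would argue that in this situation the only Maslov-zero disk, up to the relevant classes, has boundary class dual to the exceptional/flopped curve, and that its count produces the factor $\dfrac{y_1 y_3 + y_2 y_4}{y_1 y_2 y_3 y_4}$ — more precisely, that the two distinguished holomorphic sections/disks contributing correspond to the two monomials $y_1 y_3$ and $y_2 y_4$ on the right side of \eqref{eq:coord_change}, matching the two terms of the Pl\"ucker relation \eqref{eq:pluecker} via Theorem~\ref{th:main}. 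Thus the variable $y$ attached to the diagonal $e$ and the variable $y'$ attached to $e'$ are related exactly by $y' = \frac{1}{y}\cdot\frac{y_1 y_2 y_3 y_4}{y_1 y_3 + y_2 y_4}$, while the remaining coordinates $y_i$ and the ``far'' coordinates are unaffected because the corresponding cycles do not link the codimension-two discriminant stratum.

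The technical heart is the disk count near the wall. I would localize: near a generic point of the wall, the geometry of $\Psi_t$ looks like $(\text{conifold-type local model}) \times (\text{trivial factor})$, so the problem reduces to the well-understood wall-crossing in the local model $\cO(-1)\oplus\cO(-1)\to\bP^1$ (equivalently, the $\mathbb{A}_1$ cluster transformation), where exactly one Maslov-zero disk family contributes and the transformation is the standard $x\mapsto x(1+w)$. Matching the local invariant $w$ with the ratio of Pl\"ucker monomials requires tracking symplectic areas through the degeneration, which I expect to be the main obstacle: one must show that the ``instanton-corrected'' gluing produces precisely the two monomials $y_1y_3$ and $y_2y_4$ with coefficient $1$ each, with no further corrections from higher Maslov-zero disks. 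I would handle this by a dimension/positivity argument — Maslov-zero disks with nontrivial boundary must have boundary class a positive multiple of the single vanishing cycle, and area considerations rule out multiple covers contributing new monomials — together with a direct comparison to the computation of $W_\Gamma$ in \cite[Theorem 1.6]{MR3211821}, which already encodes the relevant disk numbers. Finally, I would note that the tropicalization/piecewise-linear compatibility of $\Delta_\Gamma$ and $\Delta_{\Gamma'}$ asserted after \eqref{eq:coord_change} is exactly the statement that this wall-crossing glues the two chamber charts into one global SYZ base, completing the identification.
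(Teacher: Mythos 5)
Your overall strategy---reduce to a local model near the wall and identify the wall-crossing factor with the cluster exchange relation---is the same as the paper's, but the proposal defers or misidentifies exactly the steps where the work lies. First, the family $\Psi_t$ is not constructed by blowing up or by passing through a common blow-up of toric varieties: the paper defines it directly on $\Gr(2,n)$ by $\psi_t=(1-t)\varphi_{ac}^2-t\varphi_{bd}^2$ using the bending Hamiltonians, and the Poisson-commutativity of this interpolation with the remaining $\psi_{ij}$ is what makes $\Psi_t$ a completely integrable system at all; your construction is not specified enough to know that it produces a Lagrangian torus fibration on $\Gr(2,n)$ interpolating $\Psi_\Gamma$ and $\Psi_{\Gamma'}$. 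Second, the local model is not the resolved conifold $\cO(-1)\oplus\cO(-1)\to\bP^1$; it is Auroux's example $Y=\{x_1x_2=1+x_3\}\cong\bC^2$ fibred by $(|x_3|,\mu_{S^1})$, and the assertion that ``near a generic point of the wall the geometry splits as (local model) $\times$ (trivial factor)'' is precisely the content one has to prove. The paper does this by degenerating to the central fiber $X_0$ of the partial toric degeneration attached to $\Gamma''$ (so that disk counts are preserved, Corollary \ref{cr:toric_deg2}), proving $X_0\setminus D_0\cong \Gr^{\circ}(2,4)\times(\bCx)^{2(n-4)}$ (Corollary \ref{cr:complement}), and then exhibiting explicit Hamiltonian isotopies, controlled by the projection to the $x_3$-plane, taking the fibers $L_t(\bsu)$ to product tori $T_{r,R}\times T'$ without crossing any Maslov-zero wall. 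None of this is supplied by a generic localization argument.

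Third, the ``technical heart'' you flag---showing the instanton correction is exactly $1+z_\alpha$ with $z_\alpha$ equal to the correct Laurent monomial---is not settled by a dimension/positivity argument ruling out multiple covers; in the paper it follows because the relevant tori are literally products with Auroux's model, where the unique Maslov-zero disk family and the factor $h(z_\alpha)=1+z_\alpha$ are known explicitly, and because the boundary classes and symplectic areas of the lifts $\tilde\beta_1,\tilde\beta_2,\tilde\beta_3,\tilde\alpha$ are computed one by one (Lemmas \ref{lm:beta12}, \ref{lm:beta3}, \ref{lm:alpha}) via the toric degenerations $X_\Gamma$, $X_{\Gamma'}$ and the area formula $\omega(\tilde\beta_i)=\ell_i(\bsu)$. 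These computations are what pin down $z_{\tilde\alpha}=y_{ab}y_{cd}\prod_{i=b}^{c-1}y_{i,i+1}\,/\,(y_{ad}y_{bc})$, including the product $\prod_{i=b}^{c-1}y_{i,i+1}$ that distinguishes \eqref{eq:geom-lift} from the naive formula; your sketch gives no mechanism for recovering this factor. Finally, note that the two monomials of the exchange relation do not come from two distinct Maslov-zero disks: there is a single wall-crossing class $\alpha=\beta_1-\beta_2$, and the two terms arise as $z_{\tilde\beta_3}=z_{\tilde\beta_2}(1+z_{\tilde\alpha})=z_{\tilde\beta_2}+z_{\tilde\beta_1}$, i.e.\ from the two Maslov-index-two classes on the Clifford side. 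So the proposal captures the intended picture but, as written, has a genuine gap at each of the three places where the proof actually has content.
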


\pref{th:main2}
is proved by reduction
to the case of $\Gr(2,4)$
by a degeneration argument,
which is then handled directly
along the lines of \cite{MR2386535, MR2537081}.

It is suggested in \cite{MR2386535, MR2537081}
that the mirror of a Fano manifold is obtained by
first taking a special Lagrangian torus fibration
on the complement of an anti-canonical divisor,
and then equipping its Strominger--Yau--Zaslow mirror
with the potential function of the fiber.
The integrable system $\Psi_t$
does not restrict to a Lagrangian torus fibration
on the complement of an anti-canonical divisor, and
it is an interesting problem to find a Lagrangian torus fibration
on the complement of an anti-canonical divisor,
which allows one to fit Rietsch's mirror
into this framework.
Another interesting question
is whether there are other mirrors
associated with other Lagrangian torus fibrations
on the complement of other anti-canonical divisors.


This paper is organized as follows.
After fixing notation for triangulations of convex polygons
in Section 2,
we give  in Section 3
the construction of completely integrable systems
on $\Gr(2,n)$.
In Section 4 we recall toric degenerations of $\Gr(2,n)$
associated with triangulations of a convex $n$-gon,
which enables us to compute potential functions of
Lagrangian torus fibers of $\Psi_{\Gamma}$.
In Section 5 we show that the potential functions 
for different triangulations
are related by the coordinate change  \pref{eq:coord_change}.
Section 6 is a quick review of cluster algebras.
Theorem \ref{th:main} is proved in Section 7.
In Section 8 we recall the wall-crossing formula 
given by Auroux in
\cite{MR2386535, MR2537081},
which is enough for our purpose since the integrable system
$\Psi_t$ has only one wall.
In Section 9 we prove Theorem \ref{th:main2} in the case 
of $\Gr(2,4)$.
The proof for general $\Gr(2,n)$ is given in Section 10.

\textit{Acknowledgment}:
We thank Yank\i\ Lekili
for collaboration
at an early stage of this work;
it is originally conceived as a joint project with him.
We thank River Chiang
for organizing a workshop in Tainan
in July 2014,
where this project has been initiated.
We also thank the anonymous referee
for reading the manuscript carefully,
pointing out mistakes, and
suggesting a number of improvements.
Y.~N. is supported
by  Grant-in-Aid for Scientific Research (15K04847).
K.~U. is supported
by Grant-in-Aid for Scientific Research
 (24740043, 15KT0105, 16K13743, 16H03930).


\section{Triangulations}

Fix an integer $n$ greater than 2,
and let $P$ be a convex planar polygon with $n$ sides 
called the \emph{reference polygon}.
We order the vertices of $P$
in such a way that respects the natural cyclic order
on the boundary of $P$,
and define the side vectors $e_i \in \bR^2$ for $i=1, \ldots, n$
as the difference between the $i$-th vertex
and the $(i+1)$-st vertex.
Setting $I(i,j) = \{i, i+1, i+2, \dots , j-1 \}$,
we can write the diagonal
connecting the $i$-th vertex and the $j$-th vertex as
\begin{align}
 d_{ij} = \sum_{k \in I(i,j)} e_k.
\end{align}
Take a subdivision $\Gamma$ of $P$ given by 
a set of diagonals which are pairwise non-crossing
in the interior of $P$.
Note that the non-crossing condition for diagonals $d_{ij}$, $d_{kl}$
 is equivalent to 
\begin{equation}
  I(i,j) \subset I(k,l) \  \text{or} \ 
  I(k,l) \subset I(i,j) \  \text{or} \ 
  I(i,j) \cap I(k,l) = \emptyset.
  \label{eq:non-crossing}
\end{equation}
We consider the dual graph of the subdivision $\Gamma$,
which is a tree with $n$ leaves.
Let $\edg(i,j)$ denote an edge in the graph 
intersecting a diagonal $d_{ij}$ or a side $e_i = d_{i,i+1}$ of $P$
connecting the $i$-th and $j$-th vertices,
where we assume $\edg(n, n+1) = \edg(1,n)$.
\begin{figure}[h]
  \centering
  \includegraphics[bb=0 0 100 95]{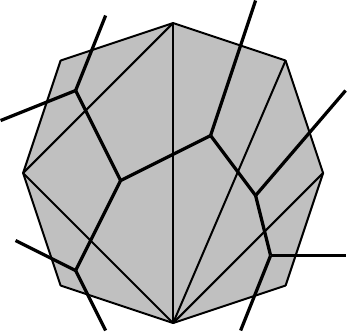}
  \caption{A triangulation of a convex polygon and  its dual graph.}
  \label{fg:trianglation1}
\end{figure}
In what follows we regard $\Gamma$ as the set of edges
in the dual graph by abuse of notation,
and let
\begin{align}
  \Int \Gamma &= \{ \edg(i,j) \in \Gamma \mid |i-j| \ge 2 \}, \\
  \partial \Gamma &= \{ \edg(i, i+1) \mid i=1, \dots, n \}
\end{align}
be the sets of interior edges and 
leaves of $\Gamma$, respectively.
We also consider a ``pruned'' tree obtained from $\Gamma$ 
by removing the $n$-th leaf $\edg(1,n)$, and let
\begin{equation}
  \Prn \Gamma = \Gamma \setminus \{ \edg(1,n) \}
\end{equation}
be the set of its edges.
If $\Gamma$ is a triangulation of $P$,
which is given by $n-3$ diagonals, 
then we have
\begin{align}
  \# \Gamma &= 2n-3, \\
  \# \partial \Gamma &= n = \dim \bT_{U(n)}, \\
  \# \Prn \Gamma &= 2n-4 = \dim_{\bC} \Gr(2,n), \\
  \# \Int \Gamma &= n-3
    = \dim_{\bC} \Gr(2,n) \GIT \bT_{U(n)},
\end{align}
where $\bT_{U(n)} \subset U(n)$ is a maximal torus
consisting of diagonal matrices.


\section{Integrable systems on $\Gr(2,n)$} 

Fix a constant $\lambda >0$,
and identify the Grassmannian $\Gr (2,n)$ of 2-planes in $\bC^n$ 
with 
the adjoint orbit 
\begin{equation}
  \scO_{\lambda} = \{ x \in \sqrt{-1} \fraku(n)
  \mid \text{ eigenvalues of $x$ are $\lambda, \lambda, 0, \dots, 0$} \}
\end{equation}
of a diagonal matrix 
$\diag (\lambda, \lambda, 0, \dots ,0)$
in the space $\sqrt{-1}\fraku (n)$ of Hermitian matrices.
For each $1 \le i < j \le n+1$, 
we consider the adjoint action on $\scO_{\lambda}$ of the subgroup
\begin{equation}
  G(i,j) = \begin{pmatrix}
  \bsone_{i-1} \\
  & U(j-i) \\
  & & \bsone_{n-j+1}
  \end{pmatrix}
  \cong U(j-i)
\end{equation}
of $U(n)$.
Its moment map is given by
\begin{equation}
  \mu_{G(i,j)} \colon \scO_{\lambda} \to \sqrt{-1} \fraku (j-i),
  \quad
  x = (x_{kl}) \mapsto 
  \mu_{G(i,j)}(x) = (x_{kl})_{ k,l \in I(i, j)},
\end{equation}
where we identify the dual space of 
the Lie algebra $\Lie G(i,j) \cong \fraku(j-i)$ of $G(i,j)$ with 
$\sqrt{-1} \fraku(j-i)$ by an invariant inner product.
Since
\begin{equation}
  \rank \mu_{G(i,j)}(x) \le \rank x \le 2,
  \quad x \in \scO_{\lambda}
\end{equation}
for each $(i,j)$ with  $|i-j| \ge 2$,
each Hermitian matrix $\mu_{G(i,j)}(x)$ has 
at most two nonzero eigenvalues
$\lambda^{(i,j)}_1(x) \ge \lambda^{(i,j)}_2(x) \ge 0$.
For each $1 \le i < j \le n$,
we define a function $\psi_{ij}$
on $\scO_{\lambda}$ by
\begin{equation}
  \psi_{ij}(x) = \begin{cases}
      \lambda^{(i,j)}_1(x) , & \text{if $|i-j| \ge 2$},\\
      \mu_{G(i, i+1)}(x) = x_{ii}, \quad &  j=i+1.
  \end{cases} 
\end{equation}
Note that the the moment map
$\mu_{\bT_{U(n)}} \colon \scO_{\lambda} \to \bR^n$ 
of the action of the maximal torus 
$\bT_{U(n)} = \prod_{i=1}^{n} G(i,i+1)$ is given by
\begin{equation}
  \mu_{\bT_{U(n)}} = (\psi_{12}, \psi_{23}, \psi_{34}, \dots, \psi_{n,n+1}),
\end{equation}
and hence  $\{\psi_{i, i+1}\}_{1 \le i \le n}$ satisfies one relation
\begin{equation}
  \psi_{12}(x) + \psi_{23}(x) + \dots + \psi_{n-1,n}(x) + \psi_{n,n+1}(x)
  = \tr x = 2 \lambda.
\end{equation}
In general, 
for any $n \times n$ Hermitian matrix $x = (x_{ij})$,
the mini-max principle implies that 
the eigenvalues $\lambda_1 \ge \dots \ge \lambda_n$ of $x$ and
those $\mu_1 \ge \dots \ge \mu_{n-1}$ of the submatrix
$(x_{ij})_{1 \le i, j \le n-1}$ satisfy
\begin{equation}
  \lambda_1 \ge \mu_1 \ge \lambda_2 \ge \mu_2 \ge \lambda_3 \ge
  \dots \ge \lambda_{n-1} \ge \mu_{n-1} \ge \lambda_n.
\end{equation}
In our situation, each $x \in \scO_{\lambda}$ has eigenvalues 
$\lambda, \lambda, 0, \dots, 0$, and hence
the largest eigenvalue $\psi_{1n}(x) = \lambda^{(1,n)}_1(x) $
of $\mu_{G(1,n)}(x) = (x_{ij})_{1 \le i,j \le n-1}$ is constant:
\begin{equation}
\psi_{1n} = \lambda.
\end{equation}
For a triangulation $\Gamma$ of the reference $n$-gon $P$,
the non-crossing condition \eqref{eq:non-crossing}
implies that
$G(i,j) \subset G(k,l)$ or $G(k,l) \subset G(i,j)$
or the actions of $G(i,j)$ and $G(k,l)$ on $\scO_{\lambda}$ 
commute
for each pair $\edg(i,j), \edg(k,l) \in \Int \Gamma$ of 
interior edges.
By applying the construction of completely integrable systems in 
\cite{MR705993},
we obtain the following:

\begin{theorem}[{\cite[Section 4]{MR3211821}}]
For a triangulation $\Gamma$ of the reference polygon, 
the map
\begin{equation}
 \Psi_\Gamma
  =  (\psi_{ij})_{\edg(i,j) \in \Prn \Gamma}
  \colon \scO_{\lambda} \to 
  \bR^{\Prn \Gamma} = \bR^{2n-4}
\label{eq:def_Psi_Gamma}
\end{equation}
is a completely integrable on $\Gr(2,n) \cong \scO_{\lambda}$
with respect to the Kostant-Kirillov form.
The natural coordinate $(u_{ij})_{\edg(i,j) \in \Prn \Gamma}$ 
on $\bR^{\Prn \Gamma}$ gives an action coordinate,
and the image 
$\Delta_{\Gamma} = \Psi_{\Gamma}(\scO_{\lambda})$
is a convex polytope. 
\end{theorem}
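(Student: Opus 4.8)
The plan is to deduce the statement from the general construction of completely integrable systems out of collective functions due to Guillemin and Sternberg \cite{MR705993}, applied to the family of block subgroups $\{G(i,j)\}_{\edg(i,j)\in\Gamma}$ indexed by the dual tree of $\Gamma$, rather than to a single chain $G(1,2)\subset G(1,3)\subset\dots\subset G(1,n)$ as in the Gelfand--Cetlin case. First I would establish Poisson-commutativity of the components of $\Psi_\Gamma$. For a Hamiltonian action of a compact subgroup $K\subset U(n)$ on $\scO_\lambda$ with moment map $\mu_K$ and any smooth function $g$ on $\sqrt{-1}\Lie K$, the Hamiltonian vector field of $g\circ\mu_K$ at $x$ is the fundamental vector field of $dg_{\mu_K(x)}\in\Lie K$; combining equivariance of $\mu_K$ with the elementary fact that the differential at a point of an $\mathrm{Ad}$-invariant function lies in the stabilizer of that point (Thimm's trick), one gets $\{f\circ\mu_{G(k,l)},\,g\circ\mu_{G(i,j)}\}=0$ whenever $f$ is $\mathrm{Ad}$-invariant and $G(i,j)\subseteq G(k,l)$, and $\{f\circ\mu_{G(i,j)},\,g\circ\mu_{G(k,l)}\}=0$ whenever the two actions commute. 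By the non-crossing condition \eqref{eq:non-crossing} any two of the groups $G(i,j)$, $\edg(i,j)\in\Gamma$, are nested or act commutingly, so the top-eigenvalue functions $\psi_{kl}=\lambda^{(k,l)}_1$ (which are $\mathrm{Ad}$-invariant and smooth wherever that eigenvalue is simple) and the linear functions $\psi_{i,i+1}=x_{ii}$ (components of $\mu_{\bT_{U(n)}}$) Poisson-commute pairwise; hence $\Psi_\Gamma$ has Poisson-commuting components.

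Since $\#\Prn\Gamma=2n-4=\tfrac12\dim_{\bR}\scO_\lambda$, the next and main step is to prove that these components are functionally independent on a dense open subset. I would take $U\subset\scO_\lambda$ to be the locus on which, for every interior edge $\edg(k,l)\in\Int\Gamma$, the eigenvalue $\lambda^{(k,l)}_1(x)$ is simple and positive; its complement is closed of positive codimension. On $U$ the differential of $\psi_{kl}$ is proportional to $\sqrt{-1}$ times the rank-one orthogonal projector onto the top eigenline of $\mu_{G(k,l)}(x)$, so its Hamiltonian flow is periodic and defines a Hamiltonian circle action; these $n-3$ circle actions commute with one another and with the effective $T^{n-1}$-part of the $\bT_{U(n)}$-action, giving a Hamiltonian action of a $(2n-4)$-torus on $U$ whose moment map is $\Psi_\Gamma|_U$. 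Effectiveness of this action, equivalently linear independence of the $d\psi_{ij}$ on $U$, I would check by induction, cutting the reference polygon along an interior diagonal of $\Gamma$: the corresponding level set of $\psi_{kl}$, after symplectic reduction, is built from integrable systems of the same type on Grassmannians of two-planes attached to the two sub-polygons, so the base case $n=3$ (where $\Psi_\Gamma$ is the torus moment map on $\Gr(2,3)$, the projective plane) propagates. Alternatively, independence can be read off from the toric degeneration of $(\Gr(2,n),\Psi_\Gamma)$ recalled in Section~4, under which $\Psi_\Gamma$ becomes the moment map of a $(2n-4)$-dimensional toric variety. Granting this, $\Psi_\Gamma$ is a completely integrable system, and the circle and torus actions above exhibit $(u_{ij})$ as action coordinates.

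Finally, for the image: on $U$ the map $\Psi_\Gamma$ is the moment map of an effective Hamiltonian $(2n-4)$-torus action on a connected manifold, so $\Psi_\Gamma(U)$ is the relative interior of a convex polytope, concretely the bounded polyhedron cut out by the interlacing (mini-max) inequalities among the $\psi$'s forced by the nesting of the $G(i,j)$, together with the eigenvalue constraint defining $\scO_\lambda$. Since $\scO_\lambda$ is compact and $\Psi_\Gamma$ is continuous with $U$ dense, $\Delta_\Gamma=\Psi_\Gamma(\scO_\lambda)=\overline{\Psi_\Gamma(U)}$ is exactly that convex polytope. I expect the genuine obstacle to be the functional-independence step: because the eigenvalue functions $\lambda^{(k,l)}_1$ are only piecewise smooth, one is forced to work on the open locus of simple top eigenvalues and to verify there that the associated torus action is effective, and for a tree of subgroups (rather than the single chain of the Gelfand--Cetlin system) this requires the inductive analysis over the triangulation sketched above.
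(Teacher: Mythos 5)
Your proposal is correct and follows essentially the same route the paper takes: the paper deduces the theorem from the non-crossing condition \eqref{eq:non-crossing} (which forces any two of the groups $G(i,j)$ to be nested or to act commutingly) together with the Guillemin--Sternberg collective-function construction \cite{MR705993}, deferring the details of functional independence and the polytope description to \cite{MR3211821}, which is exactly the Thimm-trick argument you spell out.
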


\begin{remark}
One can apply this construction for general 
partial flag manifolds of type A to obtain 
several completely integrable systems.
\end{remark}

\begin{remark}
In \cite{MR3211821},
we defined $\psi_{ij} = \lambda^{(i,j)}_2$ for $|i-j| \ge 2$
instead of $\lambda_1^{(i,j)}$.
Since
\begin{equation}
  \lambda_1^{(i,j)} + \lambda_2^{(i,j)}
  = \tr \mu_{G(i,j)}
  = \sum_{k=i}^{j-1} \psi_{k, k+1},
\end{equation}
the completely integrable system $\Psi_{\Gamma}$ 
in \eqref{eq:def_Psi_Gamma} and that in \cite{MR3211821}
are related by a linear transformation on the base space,
and hence fibers of these integrable systems are the same.
\end{remark}

To describe the polytope $\Delta_{\Gamma}$ explicitly,
we recall \emph{bending Hamiltonians}
on \emph{polygon spaces} introduced by
Kapovich and Millson \cite{Kapovich-Millson_SGPES}
and Klyachko \cite{Klyachko_SP}. 
For an $n$-tuple 
$\bsr = (r_1, \dots, r_n) \in (\bR^{>0})^n$
of positive numbers,
the polygon space $\scM_{\bsr}$ is defined to be 
a moduli space of $n$-gons in $\bR^3$ with fixed 
side lengths $r_1, \dots, r_n$:
\begin{equation}
  \scM_{\bsr} \cong 
  \biggl\{ \bsxi = (\xi_1, \dots, \xi_n) \in \prod_{i=1}^n S^2(r_i)
  \biggm| \sum_{i=1}^n \xi_i = 0 \biggm\} \biggr/ SO(3),
\end{equation}
where $S^2(r) \subset \bR^3$ is a 2-sphere of radius $r$
centered at the origin.
For each $1 \le i < j \le n$, 
let $\varphi_{ij} \colon \scM_{\bsr} \to \bR$ be the function
which measures  the length of the diagonal or the side 
of each polygon $\bsxi$ connecting the $i$-th and $j$-th vertices:
\begin{equation}
  \varphi_{ij}(\bsxi) = |\xi_i + \xi_{i+1} + \dots + \xi_{j-1}|.
\end{equation}
Note that $\varphi_{i,i+1} = r_i$ are constant functions.
The function $\varphi_{ij}$ for $|i-j|\ge 2$ is called 
a bending Hamiltonian, since its Hamiltonian flow bends polygons
$\bsxi \in \scM_{\bsr}$ along the diagonal
connecting the $i$-th and $j$-th vertices.

\begin{theorem}
[Kapovich and Millson \cite{Kapovich-Millson_SGPES},
Klyachko \cite{Klyachko_SP}]
For each triangulation $\Gamma$ of $P$,
the map
\begin{equation}
 \Phi_\Gamma
  =  (\varphi_{ij})_{\edg(i,j) \in \Int \Gamma}
  \colon \scM_{\bsr} \to \bR^{n-3}
\end{equation}
is a completely integrable system on $\scM_{\bsr}$.
The image 
$\Phi_{\Gamma}(\scM_{\bsr})$
is a convex polytope defined by triangle inequalities
\begin{equation}
  |\varphi_{ij} - \varphi_{jk}| \le \varphi_{ik}
  \le \varphi_{ij} + \varphi_{jk}
\end{equation}
for each triangle with vertices 
$1 \le i < j < k \le n$ in the triangulation $\Gamma$.
\end{theorem}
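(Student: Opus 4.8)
\emph{Plan of proof.} The plan is to realize $\scM_{\bsr}$ as a symplectic quotient and to reduce everything to the geometry of gluing triangles along the dual tree of $\Gamma$. Present $\scM_{\bsr}$ as the reduction of $M = \prod_{i=1}^{n} S^2(r_i)$, each factor carrying its area form, by the diagonal Hamiltonian $SO(3)$-action at the zero level of $\bsxi \mapsto \sum_i \xi_i$; then $\dim \scM_{\bsr} = 2n-6 = 2(n-3)$, which already matches $\#\Int\Gamma = n-3$. For a diagonal $d_{ij}$ put $\eta_{ij} = \sum_{k \in I(i,j)} \xi_k$: this is the moment map of the diagonal $SO(3)$-action on the factors indexed by $I(i,j)$, so $\varphi_{ij}^2 = |\eta_{ij}|^2$ is a collective Hamiltonian in the sense of \cite{MR705993}, with Hamiltonian flow the rigid rotation of exactly those factors about the axis $\eta_{ij}$. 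First I would prove Poisson-commutativity of $\{\varphi_{ij} : \edg(i,j) \in \Int\Gamma\}$. By the non-crossing condition \eqref{eq:non-crossing}, for two such diagonals the index sets $I(i,j)$ and $I(k,l)$ are either disjoint or nested; if disjoint the two rotations move disjoint blocks of factors and so commute, and if $I(i,j) \subset I(k,l)$ then $\eta_{ij}$, hence $\varphi_{ij}$, is invariant under the flow of $\varphi_{kl}$, which rotates the block $I(k,l)$ rigidly. In either case $\{\varphi_{ij}^2, \varphi_{kl}^2\} = 0$ on $M$; these functions are $SO(3)$-invariant, so they descend to $\scM_{\bsr}$ with vanishing Poisson brackets, and on the locus where the two diagonals have positive length each $\varphi_{ij}$ is a smooth function of $\varphi_{ij}^2$, so the length functions commute there.

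The technical core is a reconstruction statement: on the dense open set $\scM_{\bsr}^{\circ} \subset \scM_{\bsr}$ of polygons for which every diagonal of $\Gamma$ has positive length and every triangle of $\Gamma$ is nondegenerate, the lengths $\varphi_{ij}$ ($\edg(i,j)\in\Int\Gamma$) together with the dihedral angles $\theta_{ij}$ along those diagonals, each defined modulo $2\pi$, form a coordinate system, so in particular the $d\varphi_{ij}$ are everywhere linearly independent on $\scM_{\bsr}^{\circ}$. I would prove this by building a polygon from prescribed edge lengths and dihedral angles one triangle at a time along the dual tree of $\Gamma$: realize a root triangle in $\bR^3$ with the prescribed side lengths (possible since they satisfy the triangle inequality), then attach each further triangle across a tree edge along the already-placed diagonal with the prescribed dihedral angle, and recurse; since the dual graph is a tree each new triangle meets the built part in exactly one edge, so there is no closing-up constraint and all the $\theta_{ij}$ are free, while the counts $n-3$ lengths plus $n-3$ angles $= 2(n-3)$ match the dimension. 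One also checks that the Hamiltonian flow of $\varphi_{ij}$ is precisely the bending flow about $d_{ij}$, which advances $\theta_{ij}$ at unit rate and is $2\pi$-periodic. Granting the reconstruction, the $\varphi_{ij}$ are functionally independent on a dense open set, which together with the previous paragraph shows $\Phi_\Gamma$ is a completely integrable system.

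Finally, the image. One inclusion is immediate: the side and diagonal lengths of each triangle of $\Gamma$, realized inside an honest polygon in $\bR^3$, satisfy the triangle inequalities. For the reverse, run the same triangle-by-triangle construction: any assignment of lengths to the edges of $\Gamma$ with $\varphi_{i,i+1} = r_i$ satisfying the triangle inequality for every triangle of $\Gamma$ (degenerate triangles included, for equality) is realized by some $\bsxi \in \scM_{\bsr}$, on choosing the dihedral angles arbitrarily. Hence $\Phi_\Gamma(\scM_{\bsr})$ is exactly the region in $\bR^{n-3}$ cut out by $|\varphi_{ij}-\varphi_{jk}| \le \varphi_{ik} \le \varphi_{ij}+\varphi_{jk}$ over the triangles of $\Gamma$; these are finitely many linear inequalities in the $\varphi$'s and the region is bounded since $\varphi_{ij} \le \sum_k r_k$, so it is a convex polytope. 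The step I expect to require the most care is the reconstruction itself — verifying that the tree structure genuinely makes all dihedral angles independent, that the flow of $\varphi_{ij}$ is the bending flow with the stated period, and that degenerate configurations are accounted for correctly at the boundary of the polytope — whereas the Poisson-commutativity is a formal consequence of the collective-Hamiltonian formalism together with the non-crossing condition.
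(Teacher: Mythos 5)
The paper does not prove this statement; it is quoted with attribution to Kapovich--Millson and Klyachko, so the only comparison available is with the argument in those references --- and your proposal is essentially that argument: commutativity of the bending Hamiltonians from the disjoint-or-nested dichotomy forced by the non-crossing condition, functional independence via the length/dihedral-angle (action--angle) coordinates built triangle-by-triangle along the dual tree, and the image identified with the product of triangle-inequality intervals. The structure is sound. One wording slip should be fixed: in the nested case $I(i,j) \subset I(k,l)$, the bending flow of $\varphi_{kl}$ rigidly rotates the block of vectors $\{\xi_k\}_{k \in I(k,l)}$ about the axis $\eta_{kl}$, so $\eta_{ij}$ itself is \emph{not} invariant (it is rotated); what is invariant is its norm $\varphi_{ij} = |\eta_{ij}|$, which is all you need for $\{\varphi_{kl}, \varphi_{ij}\} = 0$. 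You correctly restrict to the locus where the diagonals are nonzero (where $|\eta_{ij}|$ is smooth) and where the triangles are nondegenerate; that is the standard caveat, and the remaining steps (realizability of any length assignment satisfying the triangle inequalities, boundedness, and convexity because the inequalities are linear in the $\varphi$'s) are correct.
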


The Grassmannian $\Gr(2,n)$ is obtained as 
a symplectic reduction of the space 
$\Mat_{n \times 2}(\bC) \cong (\bC^2)^n$
of $n \times 2$ matrices by the right $U(2)$-action, 
and hence 
the Gelfand-MacPherson correspondence 
\cite{Gelfand-MacPherson_GGGD} 
gives an isomorphism between
the polygon space and a symplectic reduction of 
$\Gr(2,n)$ by the $\bT_{U(n)}$-action.
For a later use, we describe the isomorphism explicitly.
Since the moment map of of the right $U(2)$-action 
on $\Mat_{n \times 2}(\bC)$ is given by
\begin{equation} \label{eq:U(2)-moment}
  \mu_{U(2)} \colon \Mat_{n \times 2}(\bC) \to \sqrt{-1} \fraku (2),
  \quad
  \begin{pmatrix}
    z_1 & w_1 \\
    \vdots & \vdots \\
    z_n & w_n
  \end{pmatrix}
  \mapsto \frac 12
  \sum_{i=1}^n \begin{pmatrix}
                  |z_i|^2 & \overline{z}_i w_i\\
                  z_i \overline{w}_i & |w_i|^2
               \end{pmatrix},
\end{equation}
the level set 
 $\mu_{U(2)}^{-1}(\lambda \bsone_2)$ 
of $\mu_{U(2)}$ consists of 
$(z_i, w_i)_i \in \Mat_{n \times 2}(\bC)$ satisfying
\begin{equation}
  \sum_{i=1}^n |z_i|^2 = \sum_{i=1}^n |w_i|^2 =  2 \lambda,
  \quad
  \sum_{i=1}^n z_i \overline{w}_i = 0,
  \label{eq:ONB}
\end{equation}
and hence
\begin{equation}
 Z = (z_i, w_i)_i \longmapsto 
  \frac 12 Z Z^* = \frac 12 (z_i \overline{z_j} + w_i \overline{w_j})_{i,j}
\end{equation}
gives an isomorphism 
$\mu_{U(2)}^{-1}(\lambda \bsone_2) / U(2)
\to \scO_{\lambda} \cong \Gr(2,n)$.
The moment map
$\mu_{\bT_{U(n)}} \colon  \Mat_{n \times 2}(\bC) \to \bR^n$ 
of the left $\bT_{U(n)}$-action 
on $\Mat_{n \times 2}(\bC)$ is given by
\begin{equation} \label{eq:T-moment}
  \begin{pmatrix}
    z_1 & w_1 \\
    \vdots & \vdots \\
    z_n & w_n
  \end{pmatrix}
  \longmapsto
  \left( \frac{|z_1|^2 + |w_1|^2}2, \dots ,
  \frac{|z_n|^2 + |w_n|^2}2 \right),
\end{equation}
and thus the projection
\begin{equation}
  \mu_{\bT_{U(n)}}^{-1}(2 \bsr) 
  \cong \prod_{i=1}^n S^3(2 \sqrt{r_i})
  \longrightarrow 
  \bT_{U(n)} \backslash \mu_{\bT_{U(n)}}^{-1}(2 \bsr) 
  \cong \prod_{i=1}^n S^2(r_i)
\end{equation}
is written as
\begin{equation}
    \begin{pmatrix}
    z_1 & w_1 \\
    \vdots & \vdots \\
    z_n & w_n
  \end{pmatrix}
  \longmapsto
  (\nu(z_1, w_1), \dots, \nu(z_n, w_n) )
  \label{eq:GIT_S^2}
\end{equation}
by using the Hopf fibration 
\begin{equation}
  \nu \colon S^3(2 \sqrt{r}) \to S^2(r), \quad
  (z, w) \mapsto 
  \left( \frac{z \overline{w}}{2}, \frac{|z|^2-|w|^2}{4} \right),
  \label{eq:Hopf}
\end{equation}
where we regard $S^3(2 \sqrt{r}) \subset \bC^2$ and
$S^2(r) \subset \bC \times \bR$.
Since the condition \eqref{eq:ONB} implies 
$\sum_i \nu(z_i, w_i) = 0$, 
the map \eqref{eq:GIT_S^2} induces an isomorphism 
\begin{align}
  \bT_{U(n)} \GITl_{2\bsr} \Gr(2,n)
  & \cong  
  \bT_{U(n)} \backslash 
   \bigl( \mu_{U(2)}^{-1}(\lambda \bsone_2)
  \cap \mu_{\bT_{U(n)}}^{-1} (2\bsr) \bigr)  / U(2) \\
  &\cong  \left( \prod_{i=1}^n S^2(r_i) \right) 
  \biggr/ \!\!\!\!\! \biggr/_{\hspace{-.5em} 0} SU(2) 
  = \scM_{\bsr}.
\end{align}

Let $\varphi_{ij}$ also denote the pull-back 
to $\Gr(2,n)$ of the bending Hamiltonian.

\begin{proposition}[{\cite[Proposition 4.6]{MR3211821}}]
Two completely integrable systems 
$(\psi_{ij})_{\edg(i,j) \in \Prn \Gamma}$ and 
$(\varphi_{ij})_{\edg(i,j) \in \Prn \Gamma}$ are related by
\begin{equation}
  \varphi_{ij} = 
    \psi_{ij} - \frac 12 \sum_{k=i}^{j-1} \psi_{k,k+1},
\label{eq:GC-bending}
\end{equation}
and thus $\Psi_{\Gamma}$
induces $\Phi_{\Gamma}$ on each polygon space $\scM_{\bsr}$
under the symplectic reduction.
\end{proposition}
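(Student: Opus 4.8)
The plan is to verify the identity \eqref{eq:GC-bending} as an equality of functions on $\Gr(2,n)\cong\scO_\lambda$ using the Gelfand--MacPherson presentation, and then to read off the statement about the reduced systems. First I would represent a point of $\scO_\lambda$ as $x=\tfrac12 ZZ^*$ with $Z=(z_i,w_i)_i\in\mu_{U(2)}^{-1}(\lambda\bsone_2)$, so that $x_{kl}=\tfrac12(z_k\overline{z_l}+w_k\overline{w_l})$. For $\edg(i,j)\in\Int\Gamma$ and $I=I(i,j)$, the submatrix $\mu_{G(i,j)}(x)=(x_{kl})_{k,l\in I}$ equals $\tfrac12 Z_I Z_I^*$, where $Z_I$ is the $(j-i)\times 2$ matrix formed by the rows of $Z$ indexed by $I$; its nonzero eigenvalues are those of the $2\times2$ Gram matrix $\tfrac12 Z_I^*Z_I$, whose diagonal entries are $a=\tfrac12\sum_{k\in I}|z_k|^2$ and $d=\tfrac12\sum_{k\in I}|w_k|^2$ and whose off-diagonal entry is $b=\tfrac12\sum_{k\in I}z_k\overline{w_k}$. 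Hence $\psi_{ij}=\lambda_1^{(i,j)}=\tfrac12(a+d)+\tfrac12\sqrt{(a-d)^2+4|b|^2}$, and since $a+d=\sum_{k\in I}x_{kk}=\sum_{k=i}^{j-1}\psi_{k,k+1}$, the left-hand side of \eqref{eq:GC-bending} equals $\tfrac12\sqrt{(a-d)^2+4|b|^2}$.

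Next I would identify this with the bending length. Under the Hopf map \eqref{eq:Hopf} the vector $\xi_k=\nu(z_k,w_k)$ has $\bC$-component $z_k\overline{w_k}/2$ and $\bR$-component $(|z_k|^2-|w_k|^2)/4$, so $\sum_{k\in I}\xi_k$ has $\bC$-component $b$ and $\bR$-component $(a-d)/2$; therefore $\varphi_{ij}(x)=|\sum_{k\in I}\xi_k|=\sqrt{|b|^2+(a-d)^2/4}$, which equals the expression above, proving \eqref{eq:GC-bending} for $|i-j|\ge 2$. The degenerate case $j=i+1$ reduces to $\varphi_{i,i+1}=|\nu(z_i,w_i)|=\tfrac14(|z_i|^2+|w_i|^2)=\tfrac12 x_{ii}=\tfrac12\psi_{i,i+1}$. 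Along the way I would check that $\varphi_{ij}$ is genuinely well defined on $\scO_\lambda$ and not just on a polygon space: right multiplication of $Z$ by $U(2)$ rotates $\sum_{k\in I}\xi_k$ by the associated element of $SO(3)$, the central $U(1)\subset U(2)$ being annihilated by $\nu$, so the norm is unchanged. This yields \eqref{eq:GC-bending} on all of $\Gr(2,n)$.

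For the second assertion, I would observe that on $\mu_{\bT_{U(n)}}^{-1}(2\bsr)$ the functions $\psi_{k,k+1}=x_{kk}$ take the constant values $2r_k$, so $\tfrac12\sum_{k=i}^{j-1}\psi_{k,k+1}=\sum_{k=i}^{j-1}r_k$ is constant on the reduction. Hence \eqref{eq:GC-bending} shows that on $\bT_{U(n)}\GITl_{2\bsr}\Gr(2,n)\cong\scM_{\bsr}$ the $\bT_{U(n)}$-invariant function $\psi_{ij}$ agrees with the bending Hamiltonian $\varphi_{ij}$ up to an additive constant, while the boundary coordinates $\psi_{k,k+1}$ in $\Prn\Gamma$ descend to constants. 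Thus on each reduced space $\Psi_\Gamma$ factors through $\Phi_\Gamma$ up to an affine transformation of the base (a translation, after discarding the coordinates that have become constant), and in particular the two systems have the same fibers.

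I do not expect a genuine obstacle: the argument is essentially the bookkeeping above. The points that require care are that $\psi_{ij}=\lambda_1^{(i,j)}$ selects the eigenvalue with the plus sign of the square root (using $\lambda_1^{(i,j)}\ge\lambda_2^{(i,j)}\ge 0$), the well-definedness check for $\varphi_{ij}$ on $\scO_\lambda$, and keeping the factor-of-two conventions among $\bsr$, $2\bsr$ and $S^3(2\sqrt{r})$ consistent so that the additive constant in the last step comes out to $\sum_{k=i}^{j-1} r_k$.
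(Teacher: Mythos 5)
Your proposal is correct and follows essentially the same route as the paper: pass to the Gelfand--MacPherson presentation $x=\tfrac12 ZZ^*$, replace $\mu_{G(i,j)}(x)=\tfrac12 Z_IZ_I^*$ by the $2\times2$ Gram matrix $\tfrac12 Z_I^*Z_I$, split off the trace $\sum_{k}\psi_{k,k+1}$, and identify the remaining (traceless) contribution to the top eigenvalue with $\lvert\sum_{k\in I}\nu(z_k,w_k)\rvert=\varphi_{ij}$ via the Hopf map. The only cosmetic difference is that the paper reads off the eigenvalues of the traceless part directly from $\sqrt{-1}\,\mathfrak{su}(2)$, whereas you use the explicit quadratic formula; the content is identical, and your added checks (well-definedness on $\scO_\lambda$ and the reduction to constants of $\psi_{k,k+1}$ on $\mu_{\bT_{U(n)}}^{-1}(2\bsr)$) are consistent with the paper's conventions.
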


Note that $\varphi_{i,i+1} = (1/2)\psi_{i,i+1}$ is not an action coordinate,
since its Hamiltonian flow has period $\pi (\ne 2 \pi)$.
We give a proof of this proposition for readers' convenience.

\begin{proof}
We first note that the function $\psi_{i, i+1}$ associated to 
the leaf $\epsilon(i, i+1)$ is given by
\begin{equation}
  \psi_{i, i+1}([z_k, w_k])  = \frac{|z_i|^2 + |w_i|^2}{2},
  \quad [z_k, w_k]_k \in \Gr(2,n),
\end{equation}
which coincides with $2 \varphi_{i, i+1}([z_k, w_k])$.
Recall that, for $|i-j| \ge 2$, 
\begin{equation}
\psi_{ij}([z_k, w_k]) 
= \lambda_1^{(i,j)}([z_k, w_k]) \ge \lambda_2^{(i,j)}([z_k, w_k]) \ge 0
\end{equation}
are the first and second eigenvalues of 
\begin{equation}
\frac 12
\begin{pmatrix}
  z_i & w_i \\
  \vdots & \vdots \\
  z_{j-1} & w_{j-1}
\end{pmatrix}
\begin{pmatrix}
  \overline{z}_i & \hdots & \overline{z}_{j-1} \\
  \overline{w}_i & \hdots & \overline{w}_{j-1}
\end{pmatrix}.
\end{equation}
Then the $2 \times 2$ Hermitian matrix
\begin{multline}
\frac 12 
\begin{pmatrix}
  \overline{z}_i & \hdots & \overline{z}_{j-1} \\
  \overline{w}_i & \hdots & \overline{w}_{j-1}
\end{pmatrix}
\begin{pmatrix}
  z_i & w_i \\
  \vdots & \vdots \\
  z_{j-1} & w_{j-1}
\end{pmatrix}
= \frac 12 \sum_{k=i}^{j-1}
\begin{pmatrix}
  |z_k|^2 & \overline{z}_k w_k \\
  z_k \overline{w}_k & |w_k|^2
\end{pmatrix} \\
=
\frac 14 \sum_{k=i}^{j-1}
\begin{pmatrix}
  |z_k|^2 - |w_k|^2 & 2 \overline{z}_k w_k \\
  2 z_k \overline{w}_k & |w_k|^2 - |z_k|^2
\end{pmatrix}
+
\sum_{k=i}^{j-1} \frac{|z_k|^2 + |w_k|^2}{4} 
\begin{pmatrix} 1 & 0 \\ 0 & 1 \end{pmatrix}
\end{multline}
has eigenvalues $ \lambda_1^{(i,j)} \ge \lambda_2^{(i,j)} $.
Since 
\begin{equation}
\frac 14 \sum_{k=i}^{j-1}
\begin{pmatrix}
  |z_k|^2 - |w_k|^2 & 2 \overline{z}_k w_k \\
  2 z_k \overline{w}_k & |w_k|^2 - |z_k|^2
\end{pmatrix}
\in \sqrt{-1} \mathfrak{su} (2)
\end{equation}
has eigenvalues 
$\pm \| \sum_{k=i}^{j-1} \nu (z_k, w_k) \| = \pm \varphi_{ij} ([z_k, w_k])$, 
we have
\begin{align}
  \lambda_1^{(i,j)} &= \varphi_{ij} + 
  \sum_{k=i}^{j-1} \frac{|z_k|^2 + |w_k|^2}{4} ,\\
  \lambda_2^{(i,j)} &= - \varphi_{ij} + 
  \sum_{k=i}^{j-1} \frac{|z_k|^2 + |w_k|^2}{4},
\end{align}
which prove the proposition.
\end{proof}

We introduce another coordinate 
$(u(i,j))_{\edg(i,j) \in \Prn \Gamma}$ 
on $\bR^{2n-4}$ corresponding to 
$(\varphi_{ij})_{\edg (i,j) \in \Prn \Gamma}$ defined by
\begin{equation}
  u(i,j) = 
    u_{ij} - \frac 12 \sum_{k=i}^{j-1} u_{k,k+1}.
  \label{eq:bending_coord}
\end{equation}

\begin{corollary}
The moment polytope
$\Delta_{\Gamma} = \Psi_{\Gamma}(\scO_{\lambda})$
is  defined by triangle inequalities
\begin{equation}
  |u(i,j) - u(j,k)| \le u(i,k) \le u(i,j) + u(j,k)
  \label{eq:triangle_ineq}
\end{equation}
for each triangle with vertices 
$1 \le i < j < k \le n$ in the triangulation $\Gamma$.
\end{corollary}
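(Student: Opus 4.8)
The plan is to transfer the description of $\Phi_{\Gamma}(\scM_{\bsr})$ given by the Kapovich--Millson--Klyachko theorem to $\Delta_{\Gamma}$ via the Gelfand--MacPherson correspondence, fibered over the choice of side lengths. Concretely, fix a triangulation $\Gamma$. The leaves $\edg(i,i+1)$, $i=1,\dots,n-1$, of $\Prn \Gamma$ together with the relation $\sum_i \psi_{i,i+1} = 2\lambda$ determine the side lengths $2\bsr = (\psi_{12},\dots,\psi_{n,n+1})$ of the associated polygon space, so $\Psi_{\Gamma}$ factors as: first record $\bsr$, then record the bending Hamiltonians $\varphi_{ij}$ on $\scM_{\bsr}$ for $\edg(i,j)\in\Int\Gamma$. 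More precisely, by \eqref{eq:GC-bending} the interior components $\psi_{ij}$ are recovered from $(\varphi_{ij})_{\edg(i,j)\in\Int\Gamma}$ and $(\psi_{k,k+1})_{k}$ by an invertible affine change, so the map sending an action coordinate point $(u_{ij})_{\edg(i,j)\in\Prn\Gamma}$ to $((u_{i,i+1})_i, (u(i,j))_{\edg(i,j)\in\Int\Gamma})$ is an affine isomorphism $\bR^{2n-4}\to\bR^{n-1}\times\bR^{n-3}$ carrying $\Delta_{\Gamma}$ onto $\{(2\bsr', (v_{ij})) : \Phi_{\Gamma}(\scM_{\bsr'}) \ni (v_{ij})\}$, where $\bsr'$ runs over all positive $n$-tuples with $\sum 2r_i' \le$ (no constraint beyond positivity, as $\psi_{n,n+1}$ absorbs the slack)\footnote{one should be careful here: $\psi_{n,n+1} = 2\lambda - \sum_{i<n}\psi_{i,i+1} > 0$ is an inequality that is itself a triangle inequality for the triangle containing the edge $\edg(1,n)$, so it is already among the listed constraints.}.

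The first step is therefore to make precise that $\Delta_{\Gamma}$ is the union over admissible $\bsr$ of the polytopes $\Phi_{\Gamma}(\scM_{\bsr})$, assembled along the $\bsr$-directions; this follows from the surjectivity of $\Psi_{\Gamma}$ onto its image together with the fact, established in the proof of Proposition 4.6 and the preceding discussion, that $\Psi_{\Gamma}$ induces $\Phi_{\Gamma}$ on each symplectic reduction $\bT_{U(n)}\GITl_{2\bsr}\Gr(2,n)\cong\scM_{\bsr}$. The second step is to write out the Klyachko triangle inequalities $|\varphi_{ij}-\varphi_{jk}|\le\varphi_{ik}\le\varphi_{ij}+\varphi_{jk}$ for each triangle $(i,j,k)$ of $\Gamma$ --- including the degenerate ``triangles'' one gets when one of the three sides is actually a side $e_i$ of $P$, i.e. when $\varphi$ is the constant $r_i$ --- and to substitute \eqref{eq:bending_coord}, i.e. $u(i,j) = u_{ij} - \frac12\sum_{k=i}^{j-1}u_{k,k+1}$, thereby re-expressing everything in the $u(i,j)$ coordinates. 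The point is that after this substitution the inequality for the triangle with vertices $1\le i<j<k\le n$ reads exactly \eqref{eq:triangle_ineq}, since $u(i,j), u(j,k), u(i,k)$ are by definition the bending-coordinate values of the three sides of that triangle.

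The remaining, and I expect main, obstacle is the bookkeeping: checking that the collection of triangle inequalities \eqref{eq:triangle_ineq} ranging over triangles of $\Gamma$ is genuinely a complete (non-redundant or at least sufficient) list of facet inequalities for $\Delta_{\Gamma}$, rather than only necessary conditions. This amounts to verifying that (a) the Klyachko inequalities for a single $\scM_{\bsr}$ do cut out $\Phi_{\Gamma}(\scM_{\bsr})$ with no hidden constraints --- which is exactly the cited Kapovich--Millson--Klyachko theorem --- and (b) letting $\bsr$ vary introduces no further inequalities beyond positivity of the $\psi_{i,i+1}$ and the trace relation, both of which are themselves instances of \eqref{eq:triangle_ineq} for the boundary triangles of $\Gamma$ (the ones involving a side $e_i$ or the diagonal $d_{1n}$, using $\psi_{1n}=\lambda$). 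One should spell out how the constant bending Hamiltonians $\varphi_{i,i+1}=r_i$ and the special value $\psi_{1n}=\lambda$ enter: e.g. for a triangle of $\Gamma$ with one side equal to $e_k$, the inequality \eqref{eq:triangle_ineq} specializes with $u(k,k+1) = u_{k,k+1} - \frac12 u_{k,k+1} = \frac12 u_{k,k+1}$, matching $\varphi_{k,k+1} = \frac12\psi_{k,k+1} = r_k$. Once this dictionary is laid out, the corollary is immediate from the two cited theorems; I would present it in essentially that way, keeping the verification of the coordinate substitution explicit but brief.
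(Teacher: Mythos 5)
Your proposal is correct and follows essentially the same route the paper intends: the corollary is stated there without proof as an immediate consequence of the Kapovich--Millson--Klyachko description of $\Phi_{\Gamma}(\scM_{\bsr})$, the preceding proposition relating $\psi_{ij}$ to the bending Hamiltonians under symplectic reduction, and the coordinate change \eqref{eq:bending_coord}. Your extra care about the boundary triangles (absorbing positivity of the $\psi_{i,i+1}$ and the relation $\psi_{1n}=\lambda$ into instances of \eqref{eq:triangle_ineq}) is exactly the bookkeeping the paper leaves implicit.
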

In terms of the action coordinate 
$(u_{ij})_{\edg(i,j) \in \Prn \Gamma}$,
the inequalities \eqref{eq:triangle_ineq}   are written as
\begin{align}
  u_{ik} & \ge u_{ij} - u_{jk} + \sum_{l=j}^{k-1} u_{l, l+1}, 
    \label{eq:triangle_ineq1} \\
  u_{ik} & \ge u_{jk} - u_{ij} + \sum_{l=i}^{j-1} u_{l, l+1}, 
    \label{eq:triangle_ineq2} \\
  u_{ik} & \le u_{ij} + u_{jk}.
    \label{eq:triangle_ineq3}
\end{align}

\begin{example}[Hausmann and Knutson 
\cite{Hausmann-Knutson_PSG}]
\begin{figure}[h]
\centering
\includegraphics[bb=0 0 96 100]{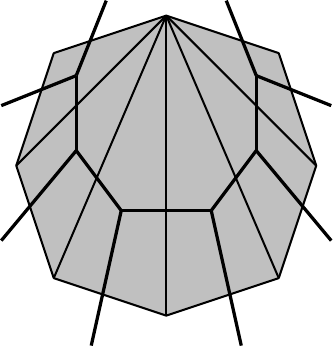}
\caption{The caterpillar}
\label{fg:caterpillar}
\end{figure}
For the triangulation $\Gamma_\cat$ given by
\begin{align} \label{eq:caterpillar}
 \Int \Gamma_\cat
  := \lc \edg(1,3), \edg(1,4), \ldots, \edg(1,n-2) \rc
\end{align}
shown in \pref{fg:caterpillar}
called the \emph{caterpillar},
the integrable system $\Psi_\GC := \Psi_{\Gamma_\cat}$
gives the {\em Gelfand-Cetlin system}
introduced by Guillemin and Sternberg
\cite{MR705993}, 
%
and the triangle inequalities \eqref{eq:triangle_ineq} 
give the Gelfand-Cetlin pattern
\begin{align} \label{eq:GC}
\begin{alignedat}{17}
  \lambda &&&& \gcbox{\sum_{i=1}^{n-1} \psi_{i,i+1} - \lambda} \\
  & \uge && \dge && \uge &&&&&&&&&&& \\
  && \gcbox{\psi_{1,n-1}} &&&& 
     \gcbox{\sum_{i=1}^{n-2}\psi_{i,i+1}-\psi_{1,n-1}} \\
  &&& \uge && \dge && \uge &&&&&&&&& \\
  &&&& \gcbox{\psi_{1,n-2}} &&&& 
    \gcbox{\sum_{i=1}^{n-3} \psi_{i,i+1} - \psi_{1,n-2}} \\
  &&&&& \uge && \dge && \uge &&&&&&& \\
  &&&&&& \dndots &&&& \dndots &&&& 0 && \\
  &&&&&&& \uge && \dge && \uge && \dge &&& \\
  &&&&&&&& \gcbox{\psi_{13}} &&&& 
    \gcbox{\sum_{i=1}^{2}\psi_{i,i+1}-\psi_{13}} &&&&& \\
  &&&&&&&&& \uge && \dge &&&&&& \\
  &&&&&&&&&& \gcbox{\psi_{12}} &&&&&&& \\
\end{alignedat}.
\end{align}

\end{example}

Suppose that we have two triangulations $\Gamma$, $\Gamma'$ 
of $P$ which are related by a \emph{Whitehead move}
in a quadrilateral $P_0$ 
with vertices  $1 \le a<b<c<d \le n$
(see Figure \ref{fg:flip1}, where $P_0$ is unshaded).
\begin{figure}[h]
\centering
\includegraphics[bb=0 0 216 88]{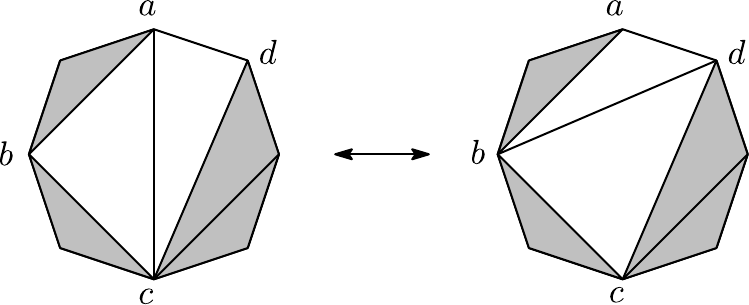}
\caption{A whitehead move}
\label{fg:flip1}
\end{figure}
Let $\Gamma''$ be the subdivision of $P$ given by common diagonals
in $\Gamma$ and $\Gamma'$  (see Figure \ref{fg:flip2});
its dual graph is obtained 
from that of $\Gamma$ (resp. $\Gamma'$)
by contracting the edge $\edg(a,c) \in \Gamma$
(resp. $\edg(b,d) \in \Gamma'$).
\begin{figure}[h]
\centering
\includegraphics[bb=0 0 102 103]{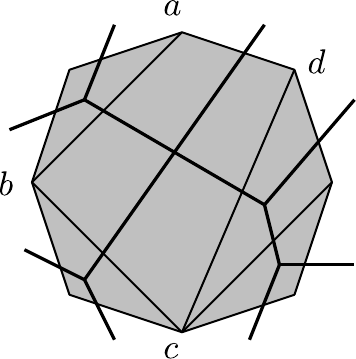}
\caption{A subdivision of $P$ given by common diagonals
           in two triangulations in Figure \ref{fg:flip1}.}
\label{fg:flip2}
\end{figure}
Note that the action coordinates on 
$\Delta_{\Gamma} \subset \bR^{\Prn \Gamma}$ and 
$\Delta_{\Gamma'} \subset \bR^{\Prn \Gamma'}$ are written as
\begin{equation}
  \bsu = ((u_{ij})_{\edg(i,j) \in \Prn \Gamma''}, u_{ac}),
  \quad
  \bsu' = ((u_{ij})_{\edg(i,j) \in \Prn \Gamma''}, u_{bd}),
\end{equation}
respectively.

\begin{proposition}
Let $\Gamma, \Gamma'$ be two triangulations of $P$ as above.
Then the piecewise linear transform
$\bsu \mapsto \bsu'$ defined by
\begin{equation}
  u_{bd} = - u_{ac} + u_{ab} + u_{bc} + u_{cd} + u_{ad}
  - \min \left( u_{ab}+u_{cd}, u_{ad} + u_{bc} 
  - \sum_{i=b}^{c-1} u_{i, i+1}
    \right)
\label{eq:PL-transform}
\end{equation}
gives a bijection between 
$\Delta_{\Gamma}$ and $\Delta_{\Gamma'}$.
\end{proposition}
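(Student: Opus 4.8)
The plan is to reduce the statement to a one-dimensional claim about intervals, using the triangle-inequality description of the moment polytopes from the preceding Corollary, and then to simplify the transformation \eqref{eq:PL-transform} by passing to the bending coordinates \eqref{eq:bending_coord}, in which it becomes transparent.

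First I would note that $\bsu \mapsto \bsu'$ fixes every coordinate $u_{ij}$ with $\edg(i,j) \in \Prn\Gamma''$ and only replaces $u_{ac}$ by $u_{bd}$. Since the common subdivision $\Gamma''$ is already a triangulation outside the quadrilateral $P_0$, the triangle inequalities cutting out $\Delta_\Gamma$ split into those attached to triangles of $\Gamma''$, which involve only the common coordinates, and those attached to the two triangles $abc$, $acd$ of $\Gamma$ inside $P_0$, which involve $u_{ac}$ as well; the same holds for $\Delta_{\Gamma'}$ with $abc$, $acd$, $u_{ac}$ replaced by $abd$, $bcd$, $u_{bd}$, and the first family of inequalities is literally identical on the two sides. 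Writing $p = u(a,b)$, $q = u(b,c)$, $r = u(c,d)$, $s = u(a,d)$, $e = u(a,c)$, $f = u(b,d)$ for the bending coordinates \eqref{eq:bending_coord}, it follows from the Corollary that, once the common coordinates are fixed, the slice of $\Delta_\Gamma$ is the interval $e \in \bigl[\max(|p-q|,|r-s|),\ \min(p+q,r+s)\bigr]$ and the slice of $\Delta_{\Gamma'}$ is $f \in \bigl[\max(|p-s|,|q-r|),\ \min(p+s,q+r)\bigr]$. A short computation (or the observation that each slice is a fibre of a bending Hamiltonian on the quadrilateral $P_0$) shows that these two intervals are nonempty under exactly the same condition on the common coordinates, namely that each of $p,q,r,s$ be at most the sum of the other three, which is the condition for a closed spatial quadrilateral with these side lengths to exist. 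Hence it suffices to treat the one-dimensional statement for a fixed admissible $(p,q,r,s)$.

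Next I would rewrite \eqref{eq:PL-transform} using $u_{ij} = u(i,j) + \tfrac12\sum_{k=i}^{j-1} u_{k,k+1}$ together with the additivity of $\sum_{k=i}^{j-1} u_{k,k+1}$ along $a<b<c<d$. All the perimeter contributions cancel and the transformation collapses to
\[
  f = \max(p+r,\ q+s) - e ,
\]
which is the tropicalization of the Pl\"ucker relation $p_{ac}p_{bd} = p_{ab}p_{cd} + p_{bc}p_{ad}$, consistent with Theorem~\ref{th:main}. Being an orientation-reversing affine map of $\bR$, it carries $\bigl[\max(|p-q|,|r-s|),\min(p+q,r+s)\bigr]$ onto the interval with endpoints $\max(p+r,q+s) - \min(p+q,r+s)$ and $\max(p+r,q+s) - \max(|p-q|,|r-s|)$, so it remains to check the two elementary identities
\begin{align*}
  \max(p+r,q+s) - \min(p+q,r+s) &= \max(|p-s|,|q-r|), \\
  \max(p+r,q+s) - \max(|p-q|,|r-s|) &= \min(p+s,q+r),
\end{align*}
valid for all $p,q,r,s \ge 0$ obeying the quadrilateral condition; each follows from a short case analysis according to the signs of $p-q$, $r-s$, $p-s$, $q-r$ and to which term attains the outer $\max$ or $\min$.

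I expect the main obstacle to be bookkeeping rather than mathematics: one must check carefully that the triangle inequalities for $\Delta_\Gamma$ and $\Delta_{\Gamma'}$ genuinely agree away from the coordinate being changed, including the boundary case where one of $\edg(a,b)$, $\edg(b,c)$, $\edg(c,d)$, $\edg(a,d)$ is the omitted leaf $\edg(1,n)$, for which the corresponding $u_{ij}$ is to be read as the constant $\lambda$; and one must track the perimeter corrections without error when converting \eqref{eq:PL-transform} to bending coordinates. The concluding case analysis is routine but should be organized so that the quadrilateral-existence hypothesis is invoked exactly where it is needed.
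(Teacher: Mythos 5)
Your proposal is correct and follows essentially the same route as the paper: fix the four side coordinates of the quadrilateral, pass to bending coordinates where \eqref{eq:PL-transform} becomes the orientation-reversing affine map $u(b,d) = -u(a,c) + \max\{u(a,b)+u(c,d),\, u(b,c)+u(a,d)\}$, and match the endpoints of the two triangle-inequality intervals by elementary min/max identities (your two identities are exactly the paper's single identity together with its image under swapping $u(b,c) \leftrightarrow u(a,d)$, and in fact hold unconditionally, not only under the quadrilateral condition). The only difference is bookkeeping order, plus your (welcome) extra care about the reduction step and the $\edg(1,n)$ boundary case, which the paper leaves implicit.
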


\begin{remark}
The piecewise linear transformation \eqref{eq:PL-transform}
is different from the one
given in \cite[Proposition 3.5]{MR3211821}.
\end{remark}

\begin{proof}
For each fixed 
$(\varphi_{ab}, \varphi_{bc}, \varphi_{cd}, \varphi_{ad})
= (r_1, r_2, r_3, r_4) \in (\bR^{>0})^4$, 
the ranges of the bending Hamiltonians 
$\varphi_{ac}$ and $\varphi_{bd}$ are given by
\begin{align}
  \max \{ |r_1-r_2|, |r_3-r_4| \}
  \le u(a,c) \le 
  \min \{r_1+r_2, r_3+r_4 \}, \\
  \max \{ |r_1-r_4|, |r_2-r_3| \}
  \le u(b,d) \le 
  \min \{r_1+r_4, r_2+r_3 \},
\end{align}
respectively.
Since 
\begin{align}
  &- \max \{ |r_1 - r_2|, |r_3 - r_4| \} \\
  & \quad = \min \{ \min\{ r_1 - r_2, r_3 - r_4 \}, 
                  \min \{r_2 - r_1, r_4 - r_3 \} \} \\
  & \quad = \min \{ \min \{r_1 + r_4, r_2 + r_3 \} - r_2- r_4, 
                 \min \{r_1 + r_4, r_2 + r_3 \} - r_1- r_3 \} \\
  & \quad =  \min \{r_1 + r_4, r_2 + r_3 \}
      + \min \{  - r_2- r_4,- r_1- r_3 \} \\
  & \quad = \min \{r_1 + r_4, r_2 + r_3 \} + \min \{ r_1 + r_3, r_2 + r_4 \}
      - ( r_1 + r_2 + r_3 + r_4 ),
\end{align}
the lengths of the ranges are the same;
\begin{multline}
  \min \{r_1+r_2, r_3+r_4 \} 
  - \max \{ |r_1-r_2|, |r_3-r_4| \} \\
  = 
  \min \{r_1+r_4, r_2+r_3 \}
  - \max \{ |r_1-r_4|, |r_2-r_3| \}.
\end{multline}
Thus the map $u(a,c) \mapsto u(b,d)$ defined by
\begin{align}
  u(b,d) &= - u(a,c) +  \min \{r_1 +r_2 , r_3 +r_4 \} 
          + \max \{ |r_1-r_4|, |r_2-r_3| \} 
  \notag \\
  &= - u(a,c) + r_1 + r_2 + r_3 + r_4 
       - \min \{ r_1+r_3, r_2 + r_4 \}
  \label{eq:PL-transform2}
\end{align} 
gives a bijection between the ranges of 
$\varphi_{ac}$ and $\varphi_{bd}$.
One can easily check that
this map is written as \eqref{eq:PL-transform}
under the coordinate change \eqref{eq:bending_coord}.
\end{proof}

\section{Degenerations of Grassmannians}

Speyer and Sturmfels \cite{Speyer-Sturmfels_TG} have shown that 
toric degenerations of the Grassmannian $\Gr(2,n)$ are
parametrized by the tropical Grassmannian, 
and its top dimensional cells are in one-to-one correspondence 
with the set of trivalent trees $\Gamma$ with $n$-leaves.
For each $\Gamma$, the corresponding toric degeneration
$f_{\Gamma} \colon \frakX_{\Gamma} \to \bC^{n-3}$ 
of $\Gr(2,n)$ can be constructed as follows.
Let $\bsp = [p_{\lf{i}\lf{j}}]_{1 \le \lf{i} < \lf{j} \le n}$ be 
a homogeneous coordinate on
$\bP(\bigwedge^2 \bC^n)$ so that 
$\Gr(2,n) \subset \bP(\bigwedge^2 \bC^n)$ is given by
the Pl\"ucker relations
\begin{equation}
  F_{\lf{i}\lf{j}\lf{k}\lf{l}}(\bsp) = 
  p_{\lf{i} \lf{j}}p_{\lf{k} \lf{l}} - p_{\lf{i}\lf{k}}p_{\lf{j}\lf{l}} 
  + p_{\lf{i}\lf{l}}p_{\lf{j}\lf{k}} = 0
\end{equation}
for $1 \le i < j < k < l \le n$.\footnote{%
Note that the indices of the the Pl\"ucker coordinates 
are labels of leaves of $\Gamma$ (or equivalently, 
sides of the reference polygon $P$), while
the indices of the Hamiltonians $\psi_{ij}$
(and hence, those of coordinates on the SYZ mirror) are 
labels of vertices of $P$.
}
For each $1 \le i , j \le n$, let $\gamma(\lf{i}, \lf{j})$ denote 
the path in the tree $\Gamma$ connecting the $i$-th and 
$j$-th leaves $\edg(i, i+1)$, $\edg(j, j+1)$.
\begin{figure}[h]
  \centering
  \includegraphics[bb=0 0 147 105]{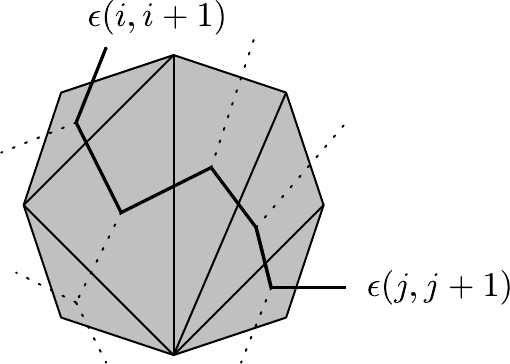}
  \caption{A path $\gamma(i,j)$ connecting the $i$-th and $j$-th leaves}
\end{figure}
We introduce $n-3$ deformation parameters 
$\bst = (t_{kl})_{\edg(k,l) \in \Int \Gamma} 
\in \bC^{\Int \Gamma}$.
Define a weight 
$\bsw (\lf{i}, \lf{j}) = (w_{kl}(\lf{i}, \lf{j}))_{\edg(k,l) \in \Int \Gamma} 
\in \bQ^{\Int \Gamma}$
of a Pl\"ucker coordinate $p_{\lf{i} \lf{j}}$ by
\begin{equation}
  w_{kl}(\lf{i}, \lf{j}) = \begin{cases}
    1/2 \quad 
      & \text{if $\gamma (\lf{i}, \lf{j})$ contains the edge $\edg(k,l)$,}\\
    0 & \text{otherwise},
  \end{cases}
\end{equation}
and consider an action of $(\bCx)^{\Int \Gamma}$ on 
$\bigwedge^2 \bC^n$ given by
\begin{equation}
  \bst \cdot \bsp = 
  ( \bst^{\bsw(\lf{i}, \lf{j})} p_{\lf{i} \lf{j}})
  = \left( \prod_{\edg(k,l)} t_{kl}^{w_{kl}(\lf{i}, \lf{j})}
     p_{\lf{i} \lf{j}} \right)
  = \left( \prod_{\edg(k,l) \subset \gamma(\lf{i}, \lf{j})}
     t_{kl}^{1/2} p_{\lf{i} \lf{j}} \right). 
\end{equation}
For a polynomial
\begin{equation}
  F(\bsp) = \sum_{I= (i_1, j_1, \dots, i_m, j_m)} 
    c_I p_{\lf{i_1} \lf{j_1}} \dots p_{\lf{i_m} \lf{j_m}}
\end{equation} 
in Pl\"ucker coordinates,
let
\begin{equation}
  w_{kl}(F) = \max \{ 
  w_{kl}(\lf{i_1}, \lf{j_1}) + \dots + w_{kl}(\lf{i_m}, \lf{j_m}) 
  \mid  c_I  \ne 0 \}
\end{equation}
be the maximum of weights of monomials in $F(\bsp)$
with respect to $t_{kl}$, and define
$F^{\Gamma}(\bsp, \bst) \in \bC[ \bsp, \bst]$ by
\begin{equation}
  F^{\Gamma} (\bsp, \bst) = 
  \bst^{\bsw(F)}
  F ( \bst^{-\bsw(\lf{i}, \lf{j})}p_{\lf{i} \lf{j}}),
\end{equation}
where 
\begin{equation}
  \bst^{\bsw(F)}  = \prod_{\edg(k,l) \in \Int \Gamma} 
  t_{kl}^{w_{kl}(F)}.
\end{equation}
Then the degenerating family 
$f_{\Gamma} \colon \frakX_{\Gamma} \to \bC^{n-3}$
associated with $\Gamma$ is given by
\begin{equation}
\frakX_{\Gamma} = 
\{ (\bsp, \bst) \in \bP( \textstyle{\bigwedge^2} \bC^n) 
\times \bC^{n-3}
\mid F^{\Gamma}_{\lf{i} \lf{j} \lf{k} \lf{l}}(\bsp, \bst) = 0, \ i<j<k<l \},
\label{eq:family_X_Gamma}
\end{equation}
whose central fiber 
$X_{\Gamma} = f_{\Gamma}^{-1}(0, \dots, 0)$ is a toric variety 
with moment polytope $\Delta_{\Gamma}$
(see \cite[Example 5.2]{MR3211821} for the case $n=5$).
\cite[Theorem 1.2]{MR3211821}
combined with \cite[Theorem 5.4]{MR3425384}
gives the following.

\begin{theorem}
\label{th:toric_deg}
The completely integrable system $\Psi_{\Gamma}$ 
on $\Gr(2,n)$ can be deformed into 
a toric moment map $\mu_{\bT_{\Gamma}}$ on $X_{\Gamma}$
with moment polytope $\Delta_{\Gamma}$.
There exists a map $\phi \colon \Gr(2,n) \to X_{\Gamma}$
which sends each Lagrangian torus fiber 
$L_{\Gamma}(\bsu) = \Psi_{\Gamma}^{-1}(\bsu)$ 
diffeomorphically to the fiber $\mu_{\bT_{\Gamma}}^{-1}(\bsu)$
of $\mu_{\bT_{\Gamma}}$ 
over the same point $\bsu \in \Int \Delta_{\Gamma}$.
\end{theorem}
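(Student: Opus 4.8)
The plan is to deduce the statement from the gradient-Hamiltonian transport of Ruan and Harada--Kaveh applied to the degenerating family $f_\Gamma \colon \frakX_\Gamma \to \bC^{n-3}$ constructed above, so the work divides into (a) verifying the hypotheses of \cite[Theorem 5.4]{MR3425384} for this family, and (b) identifying the integrable system it produces with $\Psi_\Gamma$. For (a), the key observation is that the weight system $\bsw$ entering \eqref{eq:family_X_Gamma} corresponds to a point in the relative interior of the maximal cone of the tropical Grassmannian of $\Gr(2,n)$ indexed by the trivalent tree $\Gamma$ (Speyer--Sturmfels \cite{Speyer-Sturmfels_TG}); hence the associated initial ideal of the Pl\"ucker ideal is the prime toric ideal of a lattice polytope, which one identifies with $\Delta_\Gamma$ by matching its facets with the triangle inequalities \eqref{eq:triangle_ineq}. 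Having the same Hilbert function as the Pl\"ucker ideal, this initial ideal defines a flat degeneration with general fiber $\Gr(2,n)$ and central fiber the normal projective toric variety $X_\Gamma$ with moment polytope $\Delta_\Gamma$; this is \cite[Theorem 1.2]{MR3211821}. To fit the one-parameter framework of \cite{MR3425384} one restricts $f_\Gamma$ to a generic line through the origin in $\bC^{n-3}$ whose nonzero fibers are all $\Gr(2,n)$, or equivalently iterates the $n-3$ one-step degenerations indexed by the interior edges of $\Gamma$.

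Granting this, I would equip the total space with the restriction of a K\"ahler form --- Fubini--Study on $\bP(\bigwedge^2 \bC^n)$ plus the standard form on the base --- and take the gradient-Hamiltonian vector field $V$ of the projection to the base. Although $V$ is only defined where $f_\Gamma$ is submersive, \cite[Theorem 5.4]{MR3425384} shows that the flow starting from the general fiber has a well-defined continuous limit as one reaches the central fiber, producing a surjection $\phi \colon \Gr(2,n) \to X_\Gamma$ that restricts to a symplectomorphism over the open torus orbit of $X_\Gamma$; the non-submersive locus, of real codimension at least two in each fiber, is collapsed onto the toric boundary and the singular locus. Consequently $\mu_{\bT_\Gamma} \circ \phi$ is a completely integrable system on $\Gr(2,n)$ with image $\Delta_\Gamma$, and $\phi$ sends each of its fibers over a point of $\Int \Delta_\Gamma$ diffeomorphically onto the corresponding Lagrangian torus fiber of $\mu_{\bT_\Gamma}$; these are the $\phi$ and $\mu_{\bT_\Gamma}$ of the statement.

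For (b) it remains to see that $\mu_{\bT_\Gamma} \circ \phi$ and $\Psi_\Gamma$ have the same fibers, i.e.\ that the flow of $V$ deforms $\Psi_\Gamma$ into $\mu_{\bT_\Gamma}$. The $\bT_{U(n)}$-action extends over $\frakX_\Gamma$ and commutes with $V$, which handles the leaf Hamiltonians $\psi_{i,i+1}$; for an interior edge $\edg(i,j)$ the Hamiltonian $\psi_{ij} = \lambda^{(i,j)}_1$ equals, up to the affine shift \eqref{eq:GC-bending}, the bending Hamiltonian $\varphi_{ij}$, which is the norm of an $SU(2)$-moment map built from the block $I(i,j)$ of coordinates (cf.\ \cite[Proposition 4.6]{MR3211821}), and since by \eqref{eq:non-crossing} the subgroups $G(i,j)$ for $\edg(i,j) \in \Int \Gamma$ are pairwise nested or commuting one checks, compatibly with the iterated symplectic reductions, that $V$ carries each $\psi_{ij}$ to the corresponding toric coordinate. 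This compatibility of the Speyer--Sturmfels degeneration with the iterated reduction structure underlying $\Psi_\Gamma$ is exactly what \cite[Theorem 1.2]{MR3211821} provides, and combining it with the previous paragraph gives the theorem.

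I expect the main obstacle to be this identification together with the completeness assertion: one needs not merely that the gradient-Hamiltonian flow reaches the central fiber --- which fails pointwise on the critical locus and survives only after passing to a continuous limit --- but also that the limiting system is \emph{exactly} $\Psi_\Gamma$ and not just some integrable system with the same moment polytope $\Delta_\Gamma$. The first is the technical core of \cite[Theorem 5.4]{MR3425384}, the second that of \cite[Theorem 1.2]{MR3211821}; the present theorem is their combination.
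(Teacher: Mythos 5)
Your proposal follows essentially the same route as the paper, which derives this theorem directly by combining \cite[Theorem 1.2]{MR3211821} (the flat toric degeneration with central fiber $X_\Gamma$ and moment polytope $\Delta_\Gamma$, compatible with the iterated reduction structure of $\Psi_\Gamma$) with \cite[Theorem 5.4]{MR3425384} (the extension of Ruan's gradient-Hamiltonian flow across the singular locus, yielding $\phi$). Your write-up merely spells out in more detail how these two ingredients are assembled, so there is nothing to correct.
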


The map $\phi \colon \Gr(2,n) \to X_{\Gamma}$ is
given by the \textit{gradient-Hamiltonian flow} for $f_{\Gamma}$
introduced by Ruan \cite{Ruan_I}.
Harada and Kaveh  \cite[Theorem 5.4]{MR3425384}
show that the gradient-Hamiltonian flow extends to 
singular loci of $X_{\Gamma}$.

Let $(X, \omega)$ be a symplectic manifold,
and fix a compatible almost complex structure $J$.
For a Lagrangian submanifold $L$ in $X$ 
and a relative homotopy class $\beta \in \pi_2(X, L)$,
let $\overline{\scM}_1(X,L; \beta)$ 
denote the  the moduli space of stable $J$-holomorphic maps of degree $\beta$ 
from a bordered Riemann surface of genus zero 
with one marked point and
with Lagrangian boundary condition.
Fix
$\bst \in (\bC^{\times})^{n-3} \subset \bC^{n-3}$
sufficiently close to the origin, 
and let $J_{\bst}$ be the complex structure on the fiber
$X_{\bst} = f^{-1}_{\Gamma}(\bst)$ 
of the family \eqref{eq:family_X_Gamma}.
By using a symplectomorphism $\Gr(2,n) \to X_{\bst}$
given by the gradient-Hamiltonian flow,
we regard $J_{\bst}$ as a complex structure on $\Gr(2,n)$.
The fact that the toric variety $X_{\Gamma}$ is  Fano and 
admits a small resolution \cite[Section 8]{MR3211821}
enable us to apply the argument in  
\cite[Section 9]{MR2609019}
to obtain the following:

\begin{theorem}[{\cite[Proposition 9.16]{MR2609019}}] 
\label{th:toric_deg1}
For  each $\bsu \in \Int \Delta_{\Gamma}$ 
and 
$\beta \in \pi_2(\Gr(2,n), L_{\Gamma}(\bsu))$ of Maslov index two,
there exists a diffeomorphism 
\begin{equation}
  \overline{\scM}_1(\Gr(2,n), L_{\Gamma}(\bsu); \beta)
  \overset{\sim}{\to}  
  \overline{\scM}_1(X_{\Gamma}, 
    \mu_{\bT_{\Gamma}}^{-1}(\bsu); \beta)
  \label{eq:diff_moduli}
\end{equation}
such that
\begin{equation}
  \begin{CD}
    H_*(\overline{\scM}_1(\Gr(2,n), L_{\Gamma}(\bsu); \beta)) 
    @>{\ev_*}>> 
   H_*(L_{\Gamma}(\bsu)) \\
   @VVV @VV{\phi_*^{-1}}V \\
    H_*( \overline{\scM}_1(X_{\Gamma}, 
    \mu_{\bT_{\Gamma}}^{-1}(\bsu); \beta) )
    @>{\ev_*}>> H_*(\mu_{\bT_{\Gamma}}^{-1}(\bsu))
  \end{CD}
\end{equation}
commutes,
where $\ev \colon \overline{\scM}_1(X, L; \beta) \to L$ is the
evaluation map at boundary marked points,
and $\beta$ in the right hand side of \eqref{eq:diff_moduli} 
is regarded as a homotopy class
in $X_{\Gamma}$ via the isomorphism 
$\phi_* \colon \pi_2(\Gr(2,n), L_{\Gamma}(\bsu)) \to 
  \pi_2(X_{\Gamma}, \mu_{\bT_{\Gamma}}^{-1}(\bsu))$.
\end{theorem}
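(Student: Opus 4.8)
The plan is to deduce this as an instance of the comparison theorem for holomorphic disks under toric degenerations established in \cite[Section 9]{MR2609019}, the one new ingredient being the control of the singular locus of $X_{\Gamma}$. First I would fix $\bsu \in \Int \Delta_{\Gamma}$ and, using the gradient-Hamiltonian flow of \pref{th:toric_deg} to identify $\Gr(2,n)$ with the fiber $X_{\bst} = f_{\Gamma}^{-1}(\bst)$ for $\bst \in (\bCx)^{n-3}$ near the origin, transport the integrable complex structure $J_{\bst}$ of $X_{\bst}$ to a complex structure on $\Gr(2,n)$ under which $L_{\Gamma}(\bsu)$ is carried to a Lagrangian that degenerates, as $\bst \to 0$, to the toric fiber $\mu_{\bT_{\Gamma}}^{-1}(\bsu)$ of the central fiber $X_{\Gamma}$. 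Since this fiber lies in the smooth locus of $X_{\Gamma}$, holomorphic disks of Maslov index two bounded by it in the central fiber are classified exactly as in the smooth toric case.

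The heart of the matter is then to identify the moduli space $\overline{\scM}_1(\Gr(2,n), L_{\Gamma}(\bsu); \beta)$ with its toric counterpart, uniformly for small $\bst$. By Gromov compactness, a sequence of $J_{\bst}$-holomorphic disks in class $\beta$ bounded by $L_{\Gamma}(\bsu)$ subconverges, as $\bst \to 0$, to a stable disk in the central fiber; since $X_{\Gamma}$ is Fano, every non-constant holomorphic sphere or disk has positive Maslov index, so a Maslov-index-two limit cannot develop a bubble and must be an honest holomorphic disk of Maslov index two in $(X_{\Gamma}, J_0)$. Conversely, such a disk, lying in the smooth toric locus, deforms back to a unique $J_{\bst}$-holomorphic disk for small $\bst$; here the existence of a small resolution of $X_{\Gamma}$ from \cite[Section 8]{MR3211821} is used to guarantee that the deformation theory near the singular locus is unobstructed and that the Fredholm index is unchanged. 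Repeating this with one interior marked point and tracking the boundary evaluation map produces the diffeomorphism \eqref{eq:diff_moduli} together with commutativity of the square relating $\ev_*$ and $\phi_*$.

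The main obstacle, and the only genuine departure from \cite{MR2609019}, is that $X_{\Gamma}$ is singular, so one must both give meaning to the disk count on the central fiber and rule out disks escaping into the singular stratum as $\bst \to 0$. Both are handled by passing to the small resolution $\widetilde{X}_{\Gamma} \to X_{\Gamma}$: its exceptional locus has complex codimension at least two and therefore supports no holomorphic disk of Maslov index two with boundary on $L_{\Gamma}(\bsu)$, so the resolution map induces a diffeomorphism of disk moduli spaces, and $\widetilde{X}_{\Gamma}$, being a smooth Fano toric variety, places us in the exact setting of \cite[Proposition 9.16]{MR2609019}. With this reduction in place, the remaining points — convergence of complex structures under the gradient-Hamiltonian flow, which by \cite[Theorem 5.4]{MR3425384} extends over the singular locus, together with transversality and orientation compatibility — are routine and carried out as in \emph{loc.\ cit.}
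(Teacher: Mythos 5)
Your proposal follows the same route as the paper, which offers no independent proof of this statement: it simply notes that $X_{\Gamma}$ is Fano and admits a small resolution, and on that basis invokes the argument of \cite[Section 9]{MR2609019}, exactly as you do. One caveat on your reconstruction of that cited argument: a small resolution is crepant, so $\widetilde{X}_{\Gamma}$ is in general \emph{not} Fano, and its role in \emph{loc.\ cit.}\ is not to place you "in the exact setting" of a smooth Fano toric variety but to control degenerating disks that meet the singular locus --- the exceptional set is disjoint from $\mu_{\bT_{\Gamma}}^{-1}(\bsu)$ and its curves have Chern number zero, so Maslov indices can be computed on the resolution and bubbling into the singular stratum is excluded; with that correction your sketch matches the intended argument.
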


Suppose that we have two triangulations 
$\Gamma$ and $\Gamma'$ related by 
a Whitehead move as in Figure \ref{fg:flip1}.
Then the subdivision $\Gamma''$ defined by
common diagonals in $\Gamma$ and $\Gamma'$ 
gives  a subfamily 
\begin{equation}
  f_{\Gamma''} \colon \frakX_{\Gamma''} \longrightarrow \bC^{n-4} 
  = \bC^{\Int \Gamma''}
  \label{eq:partial_deg}
\end{equation}
of 
$f_{\Gamma} \colon \frakX_{\Gamma} \to \bC^{\Int \Gamma}$ 
(resp. 
$f_{\Gamma'} \colon \frakX_{\Gamma'} \to \bC^{\Int \Gamma'}$)
defined by $t_{ac} = 1$ (resp. $t_{bd}=1$),
on which 
$\Psi_{\Gamma} = (\psi_{ij})_{\edg(i,j) \in \Prn \Gamma}$ 
(resp. 
$\Psi_{\Gamma'} = (\psi_{ij})_{\edg(i,j) \in \Prn \Gamma'}$) 
can be deformed into 
a completely integrable system 
$\Psi_{\Gamma}^0 
  = (\psi_{ij}^0)_{\edg(i,j) \in \Prn \Gamma}$ 
(resp. 
$\Psi_{\Gamma'} = (\psi_{ij}^0)_{\edg(i,j) \in \Prn \Gamma'}$) 
on the central fiber $X_0 = f_{\Gamma''}^{-1}(0, \dots, 0)$.
We write fibers of $\Psi_{\Gamma}^0$ as
$L_{\Gamma}^0(\bsu) = (\Psi_{\Gamma}^0)^{-1}(\bsu)$.

\begin{corollary} \label{cr:toric_deg2}
For each $\bsu \in \Int \Delta_{\Gamma}$ and 
$\beta \in \pi_2(\Gr(2,n), L_{\Gamma}(\bsu))$ of Maslov index two,
there exists a diffeomorphism 
\begin{equation}
  \overline{\scM}_1(\Gr(2,n), L_{\Gamma}(\bsu); \beta)
  \overset{\sim}{\to}  
  \overline{\scM}_1(X_0, L^0_{\Gamma}(\bsu); \beta)
\end{equation}
which commutes with evaluation maps on the homology groups.
The same is true for $\Gamma'$.
\end{corollary}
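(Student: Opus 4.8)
The plan is to obtain Corollary~\ref{cr:toric_deg2} from Theorem~\ref{th:toric_deg1} by a two-step degeneration argument, inserting the partial degeneration \eqref{eq:partial_deg} as an intermediate stage between $\Gr(2,n)$ and the fully-degenerated toric variety $X_{\Gamma}$. First I would observe that the central fiber $X_0 = f_{\Gamma''}^{-1}(0,\dots,0)$ of the partial degeneration is itself a partial toric degeneration that admits a further toric degeneration to $X_{\Gamma}$: indeed, restricting the one remaining parameter $t_{ac}$ (the one set to $1$ in \eqref{eq:partial_deg}) recovers the full family $f_{\Gamma}$, so letting $t_{ac}\to 0$ degenerates $X_0$ to $X_{\Gamma}$. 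The gradient-Hamiltonian flow of Ruan \cite{Ruan_I}, which by Harada--Kaveh \cite[Theorem 5.4]{MR3425384} extends across the singular loci, then furnishes a map $\phi_0 \colon \Gr(2,n) \to X_0$ carrying each fiber $L_{\Gamma}(\bsu)$ diffeomorphically to $L^0_{\Gamma}(\bsu)$, as well as a map $X_0 \to X_{\Gamma}$ compatible with $\phi$; the key point is that the composition of these gradient-Hamiltonian flows agrees with the one for $f_\Gamma$, since both families sit inside the single ambient family \eqref{eq:family_X_Gamma}.

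Next I would transport the complex structure: fix $\bst'' \in (\bCx)^{\Int\Gamma''}$ close to the origin together with $t_{ac}=1$, let $J''$ be the corresponding complex structure on $X_0$ (pulled back to $\Gr(2,n)$ via the symplectomorphism induced by the gradient-Hamiltonian flow), and run the argument of \cite[Section~9]{MR2609019} (which requires only Fano-ness of the toric limit and the existence of a small resolution, both recorded in \cite[Section~8]{MR3211821}) to produce the diffeomorphism of moduli spaces $\overline{\scM}_1(\Gr(2,n), L_{\Gamma}(\bsu); \beta) \overset{\sim}{\to} \overline{\scM}_1(X_0, L^0_{\Gamma}(\bsu); \beta)$ commuting with evaluation maps. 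The point is that Theorem~\ref{th:toric_deg1} already carries out exactly this analysis for the full family; replacing $f_\Gamma$ by the subfamily $f_{\Gamma''}$ (equivalently, by the slice $t_{ac}=1$) changes neither the hypotheses nor the steps of the proof, because the central fiber of the further degeneration of $X_0$ is still the Fano toric variety $X_\Gamma$ with its small resolution. The statement for $\Gamma'$ is identical after exchanging the roles of $\edg(a,c)$ and $\edg(b,d)$ and setting $t_{bd}=1$.

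The main obstacle I anticipate is verifying that the moduli-space comparison in \cite[Section~9]{MR2609019} genuinely applies to the \emph{partially} degenerated fiber $X_0$ rather than only to $X_\Gamma$ itself: one must check that $X_0$, together with its residual degeneration parameter $t_{ac}$, satisfies the regularity and compactness inputs of that argument, in particular that no disk bubbling off into the singular locus of $X_0$ occurs for classes of Maslov index two. This follows because $X_0$ degenerates further to $X_\Gamma$ (which is Fano with a small resolution), so the energy and index bounds that rule out boundary bubbling for $X_\Gamma$ persist for $X_0$ by semicontinuity along the family; but making this precise requires being slightly careful that the gradient-Hamiltonian flow for the two-step degeneration composes correctly, i.e.\ that the map $\Gr(2,n)\to X_0\to X_\Gamma$ equals the one-step map $\Gr(2,n)\to X_\Gamma$ of Theorem~\ref{th:toric_deg}. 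Once this compatibility is in hand, the corollary is essentially a matter of restating Theorem~\ref{th:toric_deg1} for the intermediate family.
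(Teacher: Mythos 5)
Your proposal is correct and follows essentially the route the paper intends: the corollary is stated without separate proof as an application of the argument of Theorem \ref{th:toric_deg1} to the subfamily $f_{\Gamma''}$, using the gradient-Hamiltonian flow to match $L_{\Gamma}(\bsu)$ with $L^0_{\Gamma}(\bsu)$ and the analysis of \cite[Section 9]{MR2609019} for the moduli comparison. The one input you justify by ``semicontinuity'' --- that $X_0$ satisfies the hypotheses of that analysis --- is handled more directly in the paper, which records (in the remark following Corollary \ref{cr:complement}) that $X_0$ itself admits a small resolution as a tower of projective planes over $\Gr(P_0)$, so the argument applies to the partial degeneration verbatim.
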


We observe defining equations for $X_0$ are:
\begin{equation}
  F^{\Gamma''}_{\lf{i} \lf{j} \lf{k} \lf{l}} (\bsp, \bszero)
= F^{\Gamma}_{\lf{i} \lf{j} \lf{k} \lf{l}} 
  (\bsp, (0, \dots, 0, t_{ac}=1)) = 0.
\end{equation}
If the paths $\gamma(\lf{i}, \lf{k})$ and $\gamma(\lf{j}, \lf{l})$
intersect transversally in the interior of the quadrilateral $P_0$
as in Figure \ref{fg:X-path},
then all monomials in $F_{\lf{i} \lf{j} \lf{k} \lf{l}}$ have the same weight,
and thus
the Pl\"ucker relation is unchanged:
\begin{equation}
  F^{\Gamma''}_{\lf{i} \lf{j} \lf{k} \lf{l}} 
    (\bsp, \bszero)
  = F_{\lf{i} \lf{j} \lf{k} \lf{l}} (\bsp)
  = p_{\lf{i} \lf{j}} p_{\lf{k} \lf{l}} - p_{\lf{i} \lf{k}} p_{\lf{j} \lf{l}}
     + p_{\lf{i} \lf{l}} p_{\lf{j} \lf{k}}.
\end{equation}
In the case where $\gamma(\lf{i}, \lf{k})$ and $\gamma(\lf{j}, \lf{l})$
share at least one interior edge, the Pl\"ucker relation is 
deformed into a binomial
\begin{equation}
  F^{\Gamma''}_{\lf{i} \lf{j} \lf{k} \lf{l}} 
    (\bsp, \bszero)
  =  - p_{\lf{i} \lf{k}} p_{\lf{j} \lf{l}} + p_{\lf{i} \lf{l}} p_{\lf{j} \lf{k}},
\label{eq:pluecker-binomial}
\end{equation}
where we assume that 
$\gamma(\lf{i}, \lf{j})$ and $\gamma(\lf{k}, \lf{l})$
do not share any edge in $\Gamma''$
(see Figure \ref{fg:H-path}).
\begin{figure}[h]
  \centering
  \begin{minipage}{5cm}
    \centering
    \includegraphics[bb=0 0 108 97]{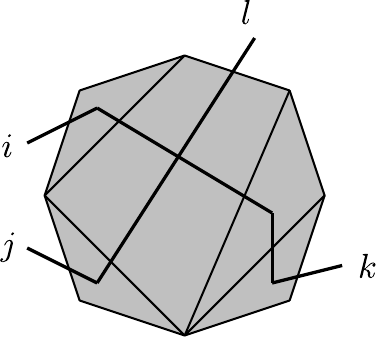}
    \caption{$\gamma(\lf{i}, \lf{k})$ and $\gamma(\lf{j}, \lf{l})$
                intersect transversally.}
    \label{fg:X-path}
  \end{minipage}
  \hspace{1cm}
  \begin{minipage}{5cm}
    \centering
    \includegraphics[bb=0 0 103 96]{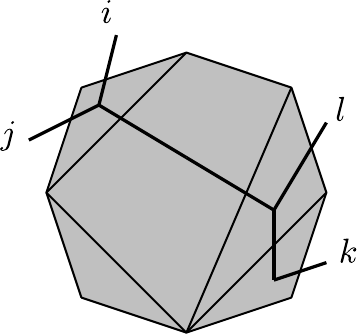}
    \caption{$\gamma(\lf{i}, \lf{k})$ and $\gamma(\lf{j}, \lf{l})$
                share an interior edge in $\Gamma''$.}
    \label{fg:H-path}
  \end{minipage}
\end{figure}

We give a description of $X_0$ following an idea 
in \cite{Howard-Manon-Millson}.
By cutting the reference polygon $P$ 
along the diagonals in $\Gamma''$,
we obtain a subdivision of $P$
into one quadrilateral $P_0$ and 
$n-4$ triangles $P_1, \dots , P_{n-4}$,
and the corresponding forest (i.e., a set of trees)
$\Gamma_0, \Gamma_1, \dots, \Gamma_{n-4}$.
\begin{figure}[h]
  \centering
    \includegraphics[bb=0 0 121 116]{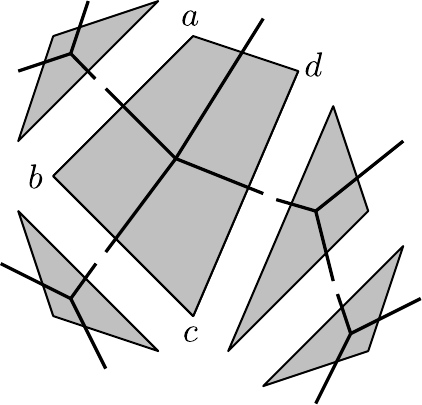}
  \caption{A subdivision of $P$ and
              the corresponding forest.}
  \label{fg:forest}
\end{figure}
For each subpolygon $P_{\alpha}$, we associate 
a cone 
$\widetilde{\Gr}(P_{\alpha})
\subset \bigwedge^2 \bC^{\Gamma_{\alpha}}$ over
the Grassmannian
\begin{equation}
  \Gr(P_{\alpha})
  := \Gr(2, \bC^{\Gamma_{\alpha}})
  \cong \begin{cases}
    \Gr(2,4) \quad &\alpha = 0,\\
    \bP^2 & \alpha = 1, \dots, n-4,
  \end{cases}
\end{equation}
on which the Pl\"ucker coordinates 
$(p^{P_{\alpha}}_{\edg, \edg'})_{\edg, \edg' \in \Gamma_{\alpha}}$
are indexed by pairs of (boundary) edges of $\Gamma_{\alpha}$.
We consider an action of a torus
$(\bCx)^{\amalg_{\alpha} \Gamma_{\alpha}} 
\cong  (\bCx)^{3n-8}$
on $\prod_{\alpha=0}^{n-4} \widetilde{\Gr}(P_{\alpha})$ defined  by
\begin{equation}
  \bstau \cdot (p_{\edg, \edg'}^{P_{\alpha}})_{\edg, \edg'}
  = ( \tau_{\edg} \tau_{\edg'} p_{\edg, \edg'}^{P_{\alpha}})_{\edg, \edg'}
  \label{eq:T^forest}
\end{equation}
for $\bstau = (\tau_{\edg})_{\edg \in \amalg_{\alpha} \Gamma_{\alpha}}
\in (\bCx)^{\amalg_{\alpha} \Gamma_{\alpha}}$.
We regard 
$(\bCx)^{\partial \Gamma''} = \prod_{i=1}^n\bCx_{\edg_{\lf{i}}}$  
as a subgroup of $(\bCx)^{\amalg_{\alpha} \Gamma_{\alpha}}$
by identifying leaves $\edg_{\lf{i}} = \edg(i,i+1)$ of $\Gamma''$
with corresponding edges in the forest $\amalg_{\alpha} \Gamma_{\alpha}$,
and define $\bCx_{\partial P} \cong \bCx$
to be the diagonal subgroup of 
$(\bCx)^{\partial \Gamma''}  ( \subset 
(\bCx)^{\amalg_{\alpha} \Gamma_{\alpha}})$.
For an interior edge $\edg = \edg(i,j) \in \Int \Gamma''$, 
let $\edg^+, \edg^-$ be two copies of $\edg$ in 
$\amalg_{\alpha} \Gamma_{\alpha}$, and
define 
\begin{equation}
  \bCx_{\edg^+, \edg^-} = \{ (\tau, \tau^{-1}) \in 
  \bCx_{\edg^+} \times \bCx_{\edg^-}
  \mid \tau \in \bCx \}
  \cong \bCx
\end{equation}
to be the anti-diagonal subgroup of 
$\bCx_{\edg^+} \times \bCx_{\edg^-}
(\subset (\bCx)^{\amalg_{\alpha} \Gamma_{\alpha}})$.
Then the torus action \eqref{eq:T^forest}
induces an action of 
the ($n-3$)-dimensional subtorus
of $(\bCx)^{\amalg_{\alpha} \Gamma_{\alpha}}$
\begin{equation}
  \bCx_{\partial P} \times 
  \prod_{\edg \in \Int \Gamma''} \bCx_{\edg^+, \edg^-}
  \cong \bCx \times (\bCx)^{n-4}
\end{equation}
on $\prod_{\alpha} \widetilde{\Gr}(P_{\alpha})$.
We define
\begin{equation}
 \bT_{\Gamma''}^{\bC} = 
  (\bCx)^{\amalg_{\alpha} \Gamma_{\alpha}}  \left/
  \prod_{\edg \in \Int \Gamma''} \bCx_{\edg^+, \edg^-} \right.
  \cong (\bCx)^{\Gamma''} 
  \cong (\bCx)^{2n-4}.
\end{equation}

\begin{proposition}
The central fiber $X_0$ of the family \eqref{eq:partial_deg}
is given by the GIT quotient
\begin{equation}
  X_0 \cong \biggl.\biggl.
    \prod_{\alpha=0}^{n-4} \widetilde{\Gr}(P_{\alpha}) 
  \biggr/ \!\!\!\!\! \biggr/
  \Bigl( \bCx_{\partial P} \times 
  \prod_{\edg \in \Int \Gamma''} \bCx_{\edg^+, \edg^-} \Bigr),
\end{equation}
and the inclusion $X_0 \hookrightarrow \bP(\bigwedge^2 \bC^n)$ 
is given by
\begin{equation}
  p_{\lf{i} \lf{j}} = \prod_{\edg, \edg' \subset \gamma(\lf{i}, \lf{j})} p_{\edg, \edg'}^{P_{\alpha}},
\end{equation}
where the product in the right hand side is taken over 
a sequence of edges of $\amalg_{\alpha} \Gamma_{\alpha}$ 
contained in the path $\gamma(\lf{i}, \lf{j} )$.
Furthermore, the induced action of 
$\bT^{\bC}_{\Gamma''}/ \bCx_{\partial P} \cong (\bCx)^{2n-5}$
on $X_0$ is the complexification of the Hamiltonian torus 
action of $(\psi_{ij}^0)_{\edg (i,j) \in \Prn \Gamma''}$.
\end{proposition}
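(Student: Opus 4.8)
The strategy is to identify $X_0$ with the described GIT quotient by unwinding the degeneration $f_{\Gamma''}$ and matching it with the ``cut-and-glue'' picture for the forest $\amalg_\alpha \Gamma_\alpha$. First I would recall from \eqref{eq:family_X_Gamma} (with $t_{ac}=1$, $\bst$ otherwise $\bszero$) that $X_0$ is cut out in $\bP(\bigwedge^2\bC^n)$ by the equations $F^{\Gamma''}_{\lf i\lf j\lf k\lf l}(\bsp,\bszero)=0$, which by the discussion preceding Figure~\ref{fg:X-path} are either genuine Pl\"ucker relations (when $\gamma(\lf i,\lf k)$ and $\gamma(\lf j,\lf l)$ cross transversally, which happens exactly inside $P_0$) or the binomial relations \eqref{eq:pluecker-binomial} (when the two paths share an interior edge of $\Gamma''$). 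The plan is to show that these are precisely the relations defining the image of $\prod_\alpha \widetilde{\Gr}(P_\alpha)$ under the monomial map $p_{\lf i\lf j}=\prod_{\edg,\edg'\subset\gamma(\lf i,\lf j)} p^{P_\alpha}_{\edg,\edg'}$, after taking the quotient by the anti-diagonal tori $\bCx_{\edg^+,\edg^-}$ and the diagonal $\bCx_{\partial P}$.

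The key steps, in order: (1) Show the monomial map $\prod_\alpha\widetilde{\Gr}(P_\alpha)\dashrightarrow \bP(\bigwedge^2\bC^n)$ is $\bT^\bC_{\Gamma''}$-equivariant and factors through the GIT quotient by $\bCx_{\partial P}\times\prod_{\edg}\bCx_{\edg^+,\edg^-}$; the anti-diagonal torus acts trivially on each monomial $p_{\lf i\lf j}$ because an interior edge $\edg$ appears in the path $\gamma(\lf i,\lf j)$ with both its copies $\edg^+,\edg^-$ (one from each adjacent subpolygon) whenever it is traversed, so $\tau_{\edg^+}\tau_{\edg^-}=1$ kills it; the diagonal $\bCx_{\partial P}$ accounts for the overall scaling of homogeneous coordinates. (2) Verify that the relations among the $p_{\lf i\lf j}$ coming from the Pl\"ucker relations on each $\Gr(P_\alpha)$, together with the ``path-composition'' identities $p^{P_\alpha}_{\edg,\edg}$-type splittings at shared edges, generate exactly the ideal $(F^{\Gamma''}_{\lf i\lf j\lf k\lf l}(\bsp,\bszero))$. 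Concretely, for a quadruple $i<j<k<l$: if all four leaves lie in (paths through) a single subpolygon the Pl\"ucker relation on that $\Gr(P_\alpha)$ maps to the honest relation; if the paths split the relation as a product of a Pl\"ucker relation in $\Gr(P_0)=\Gr(2,4)$ with monomials in the triangle factors, which reproduces either the full or the binomial form. (3) A dimension/degree count: $\dim X_0 = 2n-4$; $\dim\prod_\alpha\widetilde{\Gr}(P_\alpha) = (1+4) + (n-4)\cdot 3 = 3n-7$ after projectivizing suitably, minus $\dim(\bCx_{\partial P}\times\prod\bCx_{\edg^+,\edg^-}) = 1+(n-4) = n-3$, giving $3n-7-(n-3)=2n-4$, so the map is generically finite onto its image and, being equivariant with matching tori, an isomorphism onto $X_0$. (4) For the last sentence: identify the residual torus $\bT^\bC_{\Gamma''}/\bCx_{\partial P}\cong(\bCx)^{2n-5}$ with the complexification of the action generated by $(\psi^0_{ij})_{\edg(i,j)\in\Prn\Gamma''}$ by checking on moment maps --- the Hamiltonian $\psi^0_{ij}$ on $X_0$ is the limit of $\psi_{ij}$ under the gradient-Hamiltonian flow, and by Proposition on bending Hamiltonians (and the cut decomposition of Howard--Manon--Millson) the function $\varphi_{ij}$ on $\scM_{\bsr}$ restricted to a subpolygon is the length of the corresponding diagonal, matching the weight of $p^{P_\alpha}_{\edg,\edg'}$ under \eqref{eq:T^forest}.

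The main obstacle I expect is step (2): verifying that the binomial and honest Pl\"ucker relations arising from the forest decomposition generate the \emph{entire} toric ideal of $X_0$ and not just a subideal --- i.e., that no further relations are needed and the GIT quotient is reduced and irreducible of the expected dimension. This requires a careful combinatorial argument tracking, for each Pl\"ucker quadruple $i<j<k<l$, how the four paths $\gamma(\lf i,\lf j)$, etc., decompose across $P_0, P_1, \dots, P_{n-4}$, and showing the relation $F^{\Gamma''}_{\lf i\lf j\lf k\lf l}(\bsp,\bszero)$ is the pushforward of a single Pl\"ucker relation on one $\Gr(P_\alpha)$ times a common monomial factor; one must also confirm flatness of $f_{\Gamma''}$ so that $X_0$ has the right Hilbert polynomial. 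I would handle this by induction on $n-4$ (the number of triangles), peeling off one triangle $P_\alpha$ at a time and using that gluing a $\bP^2$ along one edge corresponds to a single $\bCx_{\edg^+,\edg^-}$-quotient, reducing to the already-understood case of $\Gr(2,4)$ for the quadrilateral $P_0$ together with the known toric structure of the triangle pieces.
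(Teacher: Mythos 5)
The paper does not prove this proposition in place: after constructing the forest $\amalg_\alpha \Gamma_\alpha$, the gluing tori, and the monomial map, it refers the reader to Sections 5 and 6 of \cite{MR3211821} and credits the idea to \cite{Howard-Manon-Millson}. Your plan is, in substance, that same argument. Your step (1) is correct and complete as stated: a crossed interior diagonal contributes exactly one factor indexed by $\edg^+$ and one indexed by $\edg^-$ to the monomial $p_{\lf{i}\lf{j}}$, so the anti-diagonal tori act trivially, while the two leaf endpoints make $\bCx_{\partial P}$ act by the overall projective scaling. Your step (2) correctly locates where the honest three-term relations survive (quadruples whose paths meet only at the quadrivalent vertex dual to $P_0$, pulling back from $\Gr(P_0)\cong\Gr(2,4)$, the triangles $\Gr(2,3)\cong\bP^2$ having no Pl\"ucker relations) versus where they degenerate to binomials, and your triangle-peeling induction is exactly the mechanism used later in the paper to prove Proposition \ref{pr:GIT_complement}. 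The last sentence of the proposition is, as you say, a weight comparison once the embedding is in place, of the kind carried out explicitly for $\psi^0_{13}$ in the proof of Lemma \ref{lm:beta12}.

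The one step that does not hold up as written is (3): a torus-equivariant, generically finite dominant map between varieties of the same dimension need not be an isomorphism --- it could be a normalization or have degree greater than one --- so the dimension count cannot substitute for completing step (2). The isomorphism must come from identifying the invariant subring $\bigl(\bigotimes_\alpha \bC[\widetilde{\Gr}(P_\alpha)]\bigr)^{\bCx_{\partial P}\times\prod_{\edg}\bCx_{\edg^+,\edg^-}}$ with the quotient of the polynomial ring by the ideal generated by the $F^{\Gamma''}_{\lf{i}\lf{j}\lf{k}\lf{l}}(\bsp,\bszero)$, which also forces the two issues you flag (flatness of $f_{\Gamma''}$, and that these forms generate the \emph{entire} initial ideal with no extraneous or embedded components). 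Rather than re-deriving this combinatorially, note that it is precisely the content of \cite{Speyer-Sturmfels_TG}, already invoked in the paper's setup: the tree weight vectors lie in cones of the tropical Grassmannian and the three-term Pl\"ucker relations form a tropical basis for $\Gr(2,n)$, so the degeneration is a flat Gr\"obner degeneration with the stated central fiber. With that substitution your outline closes up into a proof along the same lines as the one the paper cites.
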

See \cite[Section 5, 6]{MR3211821} for more detail.
Note that the $(\bCx)^{\partial \Gamma''}$-action on $X_0$
coincides with the complexification of the 
$\bT_{U(n)}$-action.
Define a subgroup 
$\bT^{\bC}_{\Gamma'' \setminus \Gamma_0} 
 \cong (\bCx)^{2n-8}$ of 
$\bT_{\Gamma''}^{\bC} 
\cong (\bCx)^{\Gamma''}$ 
by
\begin{align}
  \bT^{\bC}_{\Gamma'' \setminus \Gamma_0}
  &= \{ (\tau_{\edg})_{\edg \in \Gamma''}
      \mid \tau_{\edg} = 1 
      \text{ for $\edg =  \edg(a,b), \edg(b,c), \edg(c,d), \edg(a,d)$} \} \\
  &\cong (\bCx)^{\Gamma'' \setminus 
    \{ \edg(a,b), \edg(b,c), \edg(c,d), \edg(a,d)\}},
\end{align}
and set
\begin{equation}
  \bT_{\Gamma_0}^{\bC} 
  = \bT_{\Gamma''}^{\bC} / 
    \bT_{\Gamma'' \setminus \Gamma_0}^{\bC}
  \cong (\bCx)^4.
\end{equation}

We consider an anti-canonical divisor of $X_0$ given by
\begin{equation}
  D_0 = \bigcup_{i=1}^n \{ p_{\lf{i}, \lf{i+1}} = 0 \}.
  \label{eq:anti-cacnonical_0}
\end{equation}
For each $\alpha = 0, \dots, n-4$, we define
\begin{equation}
  \widetilde{\Gr}^{\circ}(P_{\alpha})
  = \{ [p_{\edg, \edg'}^{P_{\alpha}}] \in \widetilde{\Gr}(P_{\alpha}) 
  \mid p_{\edg, \edg'}^{P_{\alpha}} \ne 0 
  \text{ for adjacent leaves $\edg, \edg'$} \}
\end{equation}
so that its projection image 
$\Gr^{\circ}(P_{\alpha})$ in $\Gr(P_{\alpha})$ 
is a complement of 
an anti-canonical divisor in $\Gr(P_{\alpha})$.
Since
$\widetilde{\Gr}^{\circ}(P_{\alpha}) \cong (\bCx)^3$
for $\alpha = 1, \dots, n-4$,
the torus 
$(\bCx)^{\amalg_{\alpha=0}^{n-4} \Gamma_{\alpha}}$
acts freely on 
$\prod_{\alpha=0}^{n-4} \widetilde{\Gr}^{\circ}(P_{\alpha})$.

\begin{proposition} \label{pr:GIT_complement}
The complement $X_0 \setminus D_0$ 
is isomorphic to the geometric quotient
\begin{equation}
  X_0 \setminus D_0 
  \cong \biggl. \prod_{\alpha=0}^{n-4} \widetilde{\Gr}^{\circ}(P_{\alpha})
  \biggr/ \Bigl(\bCx_{\partial P}
     \times \prod_{\edg \in \Int \Gamma''} \bCx_{\edg^+, \edg^-} \Bigr)
  \label{eq:quotient_complement}
\end{equation}
of $\prod_{\alpha} \widetilde{\Gr}^{\circ}(P_{\alpha})$.
\end{proposition}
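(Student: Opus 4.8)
The plan is to read off $X_0 \setminus D_0$ from the description of $X_0$ as a GIT quotient in the preceding proposition. Write $G = \bCx_{\partial P} \times \prod_{\edg \in \Int \Gamma''} \bCx_{\edg^+, \edg^-}$ and let $\pi$ denote the quotient map from the semistable locus of $\prod_{\alpha} \widetilde{\Gr}(P_{\alpha})$ onto $X_0$. Since the embedding $X_0 \hookrightarrow \bP(\bigwedge^2 \bC^n)$ sends $p_{\lf{i} \lf{j}}$ to the monomial $\prod_{\edg, \edg' \subset \gamma(\lf{i}, \lf{j})} p^{P_{\alpha}}_{\edg, \edg'}$ in the $p^{P_{\alpha}}_{\edg, \edg'}$, and $D_0 = \bigcup_{i=1}^n \{ p_{\lf{i}, \lf{i+1}} = 0 \}$, the preimage $\pi^{-1}(X_0 \setminus D_0)$ is the intersection of the semistable locus with $\bigcap_{i=1}^n \{ p_{\lf{i}, \lf{i+1}} \ne 0 \}$. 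It therefore suffices to show that this set equals $\prod_{\alpha} \widetilde{\Gr}^{\circ}(P_{\alpha})$, and then that $\pi$ restricts there to a geometric quotient.

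The heart of the matter is a combinatorial description of the monomials $p_{\lf{i}, \lf{i+1}}$. The path $\gamma(\lf{i}, \lf{i+1})$ joins the two leaves $\edg(i,i+1)$ and $\edg(i+1,i+2)$, i.e.\ the two sides of $P$ meeting at its $(i+1)$-st vertex $v$. The subpolygons of the subdivision $\Gamma''$ containing $v$ form a fan, which is an (induced) path in the dual tree, and $\gamma(\lf{i}, \lf{i+1})$ runs along exactly this sub-path, entering and leaving each subpolygon $P_{\alpha}$ of the fan through the two sides of $P_{\alpha}$ incident to $v$. Hence $p_{\lf{i}, \lf{i+1}}$ is the product, over the subpolygons $P_{\alpha}$ that contain $v$, of the Pl\"ucker coordinate $p^{P_{\alpha}}_{\edg, \edg'}$ for the pair of adjacent leaves of $\Gamma_{\alpha}$ cut out by those two sides. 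Conversely, any pair of adjacent leaves of any $\Gamma_{\alpha}$ corresponds to two sides of $P_{\alpha}$ meeting at some vertex $m$ of $P$, and thus occurs as a factor of $p_{\lf{m-1}, \lf{m}}$ (using the cyclic convention $\edg(n,n+1) = \edg(1,n)$). So all the $p_{\lf{i}, \lf{i+1}}$ are nonzero precisely when all Pl\"ucker coordinates attached to adjacent leaves of all the $\Gamma_{\alpha}$ are nonzero, i.e.\ precisely on $\prod_{\alpha} \widetilde{\Gr}^{\circ}(P_{\alpha})$. Finally, this locus lies in the semistable locus: writing $p_{\lf{1}, \lf{2}}$ as a monomial in the $p^{P_{\alpha}}_{\edg, \edg'}$, every interior edge of $\Gamma''$ occurring in it does so through both of its copies $\edg^+, \edg^-$, so the monomial is invariant under every $\bCx_{\edg^+, \edg^-}$ and of positive $\bCx_{\partial P}$-weight; it is thus a $G$-semi-invariant of the type defining the quotient that is nowhere zero on $\prod_{\alpha} \widetilde{\Gr}^{\circ}(P_{\alpha})$. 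Putting these together, $\pi^{-1}(X_0 \setminus D_0) = \prod_{\alpha} \widetilde{\Gr}^{\circ}(P_{\alpha})$.

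To conclude, recall from the discussion preceding the proposition that $(\bCx)^{\amalg_{\alpha} \Gamma_{\alpha}}$, and hence its subtorus $G$, acts freely on $\prod_{\alpha} \widetilde{\Gr}^{\circ}(P_{\alpha})$; in particular the $G$-orbits there are closed, so every point of this locus is stable. Therefore $\pi$ restricts to a geometric quotient $\prod_{\alpha} \widetilde{\Gr}^{\circ}(P_{\alpha}) \to X_0 \setminus D_0$, which is the assertion of the proposition.

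I expect the combinatorial identity in the middle paragraph to be the step needing care: one must verify that $\gamma(\lf{i}, \lf{i+1})$ meets precisely the subpolygons at the $(i+1)$-st vertex and traverses each of them through its two edges at that vertex, so that $p_{\lf{i}, \lf{i+1}}$ is exactly the product of the ``cyclically adjacent'' Pl\"ucker coordinates of those subpolygons, and that as $i$ ranges over $1, \dots, n$ these exhaust all such coordinates. Once this dictionary between the equations $p_{\lf{i}, \lf{i+1}} = 0$ and the anti-canonical divisors of the $\Gr(P_{\alpha})$ is in place, locating the semistable locus and invoking freeness of the torus action is routine.
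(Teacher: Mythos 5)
Your proof is correct, and for the substantive half of the statement it takes a genuinely different route from the paper's. The easy inclusion (the quotient of $\prod_{\alpha} \widetilde{\Gr}^{\circ}(P_{\alpha})$ lands in $X_0 \setminus D_0$) is proved the same way in both: a path between adjacent leaves never traverses $P_0$ through a pair of opposite sides. For the reverse inclusion, however, the paper works downstairs in $X_0$: it renumbers the triangles so that each can be peeled off in turn, and uses the degenerate binomial Pl\"ucker relations $p_{\lf{i-1}, \lf{i}} p_{\lf{i+1}, \lf{i+2}} = p_{\lf{i-1}, \lf{i+1}} p_{\lf{i}, \lf{i+2}}$ holding on $X_0$ to show by induction that non-vanishing of the boundary coordinates $p_{\lf{i}, \lf{i+1}}$ forces non-vanishing of every $p_{\lf{j} \lf{k}}$ whose path avoids the opposite sides of $P_0$. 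You instead work entirely upstairs: your combinatorial dictionary --- the edges of $\gamma(\lf{i}, \lf{i+1})$ are dual precisely to the diagonals of $\Gamma''$ through the $(i+1)$-st vertex, so $p_{\lf{i}, \lf{i+1}}$ is the product of the adjacent-leaf coordinates of the fan of subpolygons at that vertex, and as $i$ varies these factors exhaust all adjacent-leaf coordinates --- identifies $\pi^{-1}(X_0 \setminus D_0)$ with $\prod_{\alpha} \widetilde{\Gr}^{\circ}(P_{\alpha})$ in one stroke, with no induction and without invoking the equations of $X_0$ at all. This dictionary is correct (a diagonal $d_{kl}$ separates $e_i$ from $e_{i+1}$ exactly when $i+1 \in \{k, l\}$), so your argument is shorter; what the paper's induction buys in exchange is an explicit list of all Pl\"ucker coordinates that are non-vanishing on $X_0 \setminus D_0$, not just the boundary ones. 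Two small points on your final step: stability requires orbits closed in the semistable locus, not merely in the open set itself, but this follows from your own observation that the locus is the non-vanishing set of positive-weight semi-invariants (hence saturated and covered by basic affine opens), where freeness does force all orbits to be closed since boundary orbits would have strictly smaller dimension; the paper's proof does not address the geometric-quotient clause at all, so you are if anything more careful here.
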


\begin{proof}
First note that the image in $X_0 \subset \bP (\bigwedge^2 \bC^n)$
of the right hand side of 
\eqref{eq:quotient_complement} is the complement of 
a subvariety in $X_0$ defined by
$p_{\lf{i} \lf{j}} = 0$ for any $i < j$ such that 
$\gamma(\lf{i}, \lf{j})$ contains no path 
$\edg(a,b) \cup \edg(c,d)$, $\edg(a,d) \cup \edg(b,c)$
in $\Gamma_0$ connecting opposite sides of $P_0$.
Since 
each path $\gamma (\lf{i}, \lf{i+1})$ connecting
adjacent leaves contains neither  
$\edg(a,b) \cup \edg(c,d)$ nor $\edg(a,d) \cup \edg(b,c)$,
we have
$p_{\lf{i}, \lf{i+1}} = \prod p_{\edg, \edg'}^{P_{\alpha}} \ne 0$
for $i=1, \dots, n$,
which means that 
the complement $X_0 \setminus D_0$ contains 
the right hand side of \eqref{eq:quotient_complement}.
To show the converse,
we renumber $P_1, \dots , P_{n-4}$ if necessary
 in such a away that, for each $\alpha = 1, \dots, n-4$, 
the $\alpha$-th triangle 
$P_{\alpha}$ shares two sides with the polygon
$P \setminus (P_1 \cup \dots \cup P_{\alpha -1})$
with $n-(\alpha -1)$ sides.
Suppose that $\Gamma_1$ contains 
the $i$-th and $(i+1)$-st leaves of $\Gamma''$.
\begin{figure}[h]
  \centering
    \includegraphics[bb=0 0 97 82]{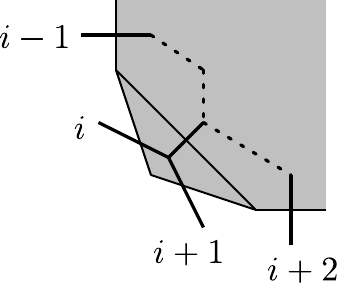}
  \caption{Paths connecting adjacent leaves}
  \label{fg:induction}
\end{figure}
Then the paths 
$\gamma(\lf{i-1}, \lf{i})$ and $\gamma(\lf{i+1}, \lf{i+2})$
share at least one interior edge 
(see Figure \ref{fg:induction}),
which implies from \eqref{eq:pluecker-binomial}
that the defining equation 
$F^{\Gamma''}_{\lf{i-1}, \lf{i}, \lf{i+1}, \lf{i+2}}(\bsp, \bszero) =0$
is a binomial
\begin{equation}
 p_{\lf{i-1}, \lf{i}} p_{\lf{i+1}, \lf{i+2}} 
  = p_{\lf{i-1}, \lf{i+1}} p_{\lf{i}, \lf{i+2}},
\end{equation}
and consequently we obtain
$p_{\lf{i-1}, \lf{i+1}},  p_{\lf{i}, \lf{i+2}} \ne 0$.
In other words, we  have $p_{\lf{j} \lf{k}} \ne 0$
for each pair $(\lf{j}, \lf{k})$ such that
the path $\gamma(\lf{j}, \lf{k})$ 
induces one in $\Gamma'' \setminus \Gamma_1$ 
connecting adjacent leaves in  
the polygon $P \setminus P_1$ with $n-1$ sides.
By repeating this process inductively, 
we obtain \eqref{eq:quotient_complement}.
\end{proof}

The $\bT^{\bC}_{\Gamma''}$-action on the quotient
$X_0 \setminus D_0$ is free, 
and the projection
$\prod_{\alpha=0}^{n-4} \widetilde{\Gr}^{\circ}(P_{\alpha})
  \to \widetilde{\Gr}^{\circ}(P_0)$ 
induces the quotient
\begin{equation}
  X_0 \setminus D_0 \longrightarrow 
  (X_0 \setminus D_0) / \bT_{\Gamma''}^{\bC}
  \cong \Gr^{\circ} (P_0) / \bT_{\Gamma_0}^{\bC}
  \cong \bCx,
\end{equation}
which extends to the GIT quotient
\begin{equation}
  X_0^{\mathrm{ss}} \longrightarrow X_0 \GIT \bT_{\Gamma''}^{\bC}
  \cong \Gr(P_0) \GIT \bT_{\Gamma_0}^{\bC}
  \cong \bP^1.
\label{eq:GIT_X_0}
\end{equation}
Note that the functions $\psi_{ac}^0$, $\psi_{bd}^0$ 
descend to bending Hamiltonians
$\varphi_{ac}$, $\varphi_{bd}$ (up to additive constants) 
on a 1-dimensional polygon space 
$\Gr(P_0) \GIT \bT_{\Gamma_0}^{\bC} \cong 
\Gr(P_0) \GIT_{\bsr} \bT_{\Gamma_0}$
parameterizing spatial quadrilaterals.
It follows from Proposition \ref{pr:GIT_complement}
that the GIT quotient of $X_0$ by the action of the subtorus
$\bT_{\Gamma'' \setminus \Gamma_0}^{\bC}$
induces a torus bundle
\begin{equation}
  X_0 \setminus D_0 \longrightarrow
  (X_0 \setminus D_0)
    / \bT_{\Gamma'' \setminus \Gamma_0}^{\bC}
  \cong \Gr^{\circ}(P_0)
  \label{eq:torus-bundle}
\end{equation}
over $\Gr^{\circ}(P_0)$.
The inclusion 
$\widetilde{\Gr}^{\circ}(P_0) \hookrightarrow 
\prod_{\alpha} \widetilde{\Gr}^{\circ}(P_{\alpha})$
defined by $p^{P_{\alpha}}_{\edg, \edg'} =1$
for all $\alpha \ne 0$ and $\edg, \edg'$
induces a section 
$\Gr^{\circ}(P_0) \to X_0 \setminus D_0$ of 
the torus bundle \eqref{eq:torus-bundle},
and thus we obtain the following.

\begin{corollary} \label{cr:complement}
The complement $X_0 \setminus D_0$ 
of the anti-canonical divisor $D_0$ is isomorphic 
to 
$\Gr^{\circ}(P_0) \times 
\bT_{\Gamma'' \setminus \Gamma_0}^{\bC}
\cong \Gr^{\circ}(2,4) \times (\bCx)^{2(n-4)}$.
\end{corollary}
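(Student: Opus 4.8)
The plan is to deduce the statement from \pref{pr:GIT_complement}, the torus bundle \eqref{eq:torus-bundle}, and the section exhibited just above the corollary: once one knows that \eqref{eq:torus-bundle} is a \emph{principal} $\bT_{\Gamma'' \setminus \Gamma_0}^{\bC}$-bundle admitting an algebraic section, it is trivial, and the corollary follows by identifying its base and fibre.

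First I would note that the $\bT_{\Gamma''}^{\bC}$-action on $X_0 \setminus D_0$ is free (as recorded before \eqref{eq:torus-bundle}), hence so is the action of its subtorus $\bT_{\Gamma'' \setminus \Gamma_0}^{\bC}$; therefore the quotient map \eqref{eq:torus-bundle} presents $X_0 \setminus D_0$ as a principal $\bT_{\Gamma'' \setminus \Gamma_0}^{\bC}$-bundle over $\Gr^{\circ}(P_0)$. The map $s \colon \Gr^{\circ}(P_0) \to X_0 \setminus D_0$ induced by the inclusion $\widetilde{\Gr}^{\circ}(P_0) \hookrightarrow \prod_{\alpha} \widetilde{\Gr}^{\circ}(P_{\alpha})$, $p_{\edg, \edg'}^{P_{\alpha}} = 1$ for $\alpha \ne 0$, is a section of this bundle: every element of $\bT_{\Gamma'' \setminus \Gamma_0}^{\bC}$ has $\tau_{\edg} = 1$ on the four edges $\edg(a,b), \edg(b,c), \edg(c,d), \edg(a,d)$ bounding $P_0$, so by \eqref{eq:T^forest} it fixes the slice $\{ p_{\edg, \edg'}^{P_{\alpha}} = 1 \mid \alpha \ne 0 \}$ while acting on the remaining factors $\widetilde{\Gr}^{\circ}(P_{\alpha})$ ($\alpha \ne 0$) so that this slice meets each $\bT_{\Gamma'' \setminus \Gamma_0}^{\bC}$-orbit exactly once, i.e.\ the composite of $s$ with the projection in \eqref{eq:torus-bundle} is the identity on $\Gr^{\circ}(P_0)$. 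Since a principal torus bundle carrying an algebraic section is trivial --- the trivialization being $(x, g) \mapsto g \cdot s(x)$ --- we conclude $X_0 \setminus D_0 \cong \Gr^{\circ}(P_0) \times \bT_{\Gamma'' \setminus \Gamma_0}^{\bC}$.

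It then remains to identify the two factors. Because $P_0$ is a quadrilateral, the tree $\Gamma_0$ has four leaves, so $\Gr(P_0) = \Gr(2, \bC^{\Gamma_0}) \cong \Gr(2,4)$, and hence $\Gr^{\circ}(P_0) \cong \Gr^{\circ}(2,4)$. For the fibre, $\bT_{\Gamma'' \setminus \Gamma_0}^{\bC} \cong (\bCx)^{\Gamma'' \setminus \{ \edg(a,b), \edg(b,c), \edg(c,d), \edg(a,d) \}}$; since $\Gamma''$ is obtained from the $(2n-3)$-edge dual tree of a triangulation by contracting one interior edge, $\# \Gamma'' = 2n-4$, so $\bT_{\Gamma'' \setminus \Gamma_0}^{\bC} \cong (\bCx)^{2n-8} = (\bCx)^{2(n-4)}$, which yields the claimed isomorphism.

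The only genuinely non-formal point --- though one essentially settled already in the discussion preceding the corollary --- is the verification that the normalization $p_{\edg, \edg'}^{P_{\alpha}} = 1$ ($\alpha \ne 0$) selects exactly one representative of each $\bT_{\Gamma'' \setminus \Gamma_0}^{\bC}$-orbit; this relies on the freeness of the $\bT_{\Gamma''}^{\bC}$-action and on the inductive (forest) description of $X_0$ together with the explicit embedding $X_0 \hookrightarrow \bP(\bigwedge^2 \bC^n)$ from \pref{pr:GIT_complement}. With that in hand, the corollary is immediate.
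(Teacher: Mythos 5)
Your argument is correct and is essentially the paper's own: the paper likewise deduces the corollary from the freeness of the torus action, the bundle structure \eqref{eq:torus-bundle}, and the section induced by setting $p^{P_{\alpha}}_{\edg,\edg'}=1$ for $\alpha\ne 0$, concluding triviality of the principal bundle. The identifications of base and fibre ($\Gr^{\circ}(P_0)\cong\Gr^{\circ}(2,4)$ and $\#\Gamma''=2n-4$, hence $(\bCx)^{2n-8}$) also match.
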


\begin{remark}
From the argument in 
\cite[Section 5]{MR2609019},
the central fiber $X_0$ admits a small resolution 
$\pi \colon \widetilde{X}_0 \to X_0$ such that
$\widetilde{X}_0$ is a tower of projective planes over 
$\Gr(P_0)$.
The map $\pi$ is isomorphism on $X_0 \setminus D_0$, 
and the torus bundle structure \eqref{eq:torus-bundle}
is given by restricting the tower structure to the open subset
$X_0 \setminus D_0 \subset \widetilde{X}_0$.
\end{remark}

\section{Potential functions}


Let $(X, \omega)$ be a symplectic manifold, and 
fix a compatible almost complex structure $J$.
For a (relatively) spin 
Lagrangian submanifold $L$ 
the cohomology group $H^*(L; \Lambda_0)$
has a structure of a filtered $A_\infty$-algebra
\cite{MR2553465}
\begin{equation}
  \frakm_k \colon H^*(L; \Lambda_0)^{\otimes k}
  \longrightarrow H^*(L; \Lambda_0),
  \quad
  k = 0, 1, 2, \dots
\end{equation}
over the Novikov ring
\begin{align}
 \Lambda_0 = \lc
  \sum_{i=0}^\infty a_i T^{\lambda_i}
  \relmid a_i \in \bC, \ \lambda_i \in \bR_{\ge 0}, \ 
   \lim_{i \to \infty} \lambda_i = \infty \rc
\end{align}
defined by `counting' $J$-holomorphic disks
$(D^2 , \partial D^2) \to (X, L)$. 
A solution to the Maurer-Cartan equation
\begin{align}
 \sum_{k=0}^\infty \frakm_k(b, \dots, b) \equiv 0
  \mod \PD([L])
\end{align}
is called a weak bounding cochain,
where $\PD ([L])$ is the Poincar\'e dual
of the fundamental class $[L]$.
The potential function is a map
$
 \po : \scM(L) \to \Lambda_0
$
from the space $\scM(L)$ of weak bounding cochains 
defined by
\begin{equation}
 \sum_{k=0}^\infty \frakm_k(b, \dots, b) = \po(b) \cdot \PD([L]).
\end{equation}
For $b \in H^1(L; \sqrt{-1} \bR) \subset H^*(L; \Lambda_0)$
satisfying the Maurer-Cartan equation, 
the potential function is naively given by
\begin{align}
  \po (b) &= \sum_{\begin{subarray}{c}
   {\beta \in \pi_2(X,L),}\\
   {\mu_L(\beta) = 2}
   \end{subarray}}
  n_{\beta}(L) z_{\beta}(b),\\
  z_{\beta}(b) &= \hol_{b}(\partial \beta) 
  T^{\int_{\beta} \omega},
  \label{eq:z_beta}
\end{align}
where $\mu_L$ is the Maslov index, 
$\hol_{b}(\partial \beta)$ is the holonomy  of $b$
regarded as a flat $U(1)$-connection on $L$
 along the boundary $\partial \beta$, 
and $n_{\beta}(L)$ is the ``number'' of pseudo-holomorphic
disks in the class $\beta$ bounded by $L$
defined by
\begin{equation}
  \ev_* [\overline{\scM}_1(X, L; \beta)]
  = n(\beta) [L].
\end{equation}

Cho and Oh \cite{MR2282365}
and
Fukaya, Oh, Ohta and Ono \cite{MR2573826}
computed the potential functions for Lagrangian torus orbits 
in toric manifolds.
Combining this with Theorem \ref{th:toric_deg1},
one can compute the potential function
of Lagrangian torus fibers 
$L(\bsu) = L_{\Gamma}(\bsu)$ 
of the completely integrable system $\Psi_{\Gamma}$
on $\Gr(2,n)$.
Let 
\begin{align} \label{eq:ell_i}
 \ell_i(\bsu) = \langle \bsv_i, \bsu \rangle - \tau_i \ge 0
\end{align}
be the defining inequalities of $\Delta_{\Gamma}$
given in 
\eqref{eq:triangle_ineq1},
\eqref{eq:triangle_ineq2},
\eqref{eq:triangle_ineq3}$;$
\begin{align}
 \Delta_{\Gamma}
  = \{ \bsu \in \bR^{2n-4} \mid \ell_i(\bsu) \ge 0, \ i = 1, \dots, m \}.
\end{align}
Recall that
a holomorphic disk in the toric variety $X_{\Gamma}$ 
of Maslov index two bounded by a Lagrangian torus orbit 
intersect  transversally
a unique toric divisor at one point.
Let $\beta_i \in \pi_2(\Gr(2,n), L(\bsu))$ denote the class of 
a pseudo-holomorphic disk 
which is deformed into that in $X_{\Gamma}$ intersecting a toric divisor corresponding to the codimension one face 
$\{\ell_i(\bsu) = 0 \}$ of $\Delta_{\Gamma}$.

\begin{theorem}[{\cite[Theorem 8.1]{MR3211821}}] \label{th:potential}
For any $\bsu \in \Int \Delta_{\Gamma}$, 
one has an inclusion
$H^1(L(\bsu); \Lambda_0) \subset \mathcal{M}(L(\bsu))$,
and the potential function of $L(\bsu)$ is given by
\begin{align} 
 \po_{\Gamma} (L(\bsu), \bsx) 
  &= \sum_{i=1}^m z_{\beta_j}(\bsu, \bsx),
    \label{eq:po} \\
  z_{\beta_i}(\bsu, \bsx) 
  &= e^{\langle \bsv_i, x \rangle} T^{\ell_i(\bsu)}
    \label{eq:po2}
\end{align}
for 
$\bsx = (x_{ij})_{\edg(i,j) \in \Prn \Gamma} 
\in H^1(L(\bsu); \Lambda_0) 
\cong \Lambda_0^{2n-4}$.
\end{theorem}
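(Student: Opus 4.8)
The plan is to deduce the formula from the corresponding computation on the toric degeneration $X_{\Gamma}$. First I would invoke Theorem~\ref{th:toric_deg} together with Theorem~\ref{th:toric_deg1}: using the gradient--Hamiltonian flow $\phi \colon \Gr(2,n) \to X_{\Gamma}$ one transports the almost complex structure $J_{\bst}$ from a nearby smooth fiber $X_{\bst}$ back to $\Gr(2,n)$, and for each $\bsu \in \Int \Delta_{\Gamma}$ and each class $\beta$ of Maslov index two one obtains an evaluation-compatible diffeomorphism of moduli spaces $\overline{\scM}_1(\Gr(2,n), L_{\Gamma}(\bsu); \beta) \cong \overline{\scM}_1(X_{\Gamma}, \mu_{\bT_{\Gamma}}^{-1}(\bsu); \beta)$. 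Since this identification respects the evaluation maps on homology, the open Gromov--Witten invariants agree: $n_{\beta}(L_{\Gamma}(\bsu)) = n_{\beta}(\mu_{\bT_{\Gamma}}^{-1}(\bsu))$, and likewise the obstruction theory is inherited, so that $H^1(L(\bsu); \Lambda_0) \subset \mathcal{M}(L(\bsu))$ follows from the toric case.

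Second, I would quote the computation of potential functions for Lagrangian torus orbits in compact toric manifolds due to Cho--Oh \cite{MR2282365} and Fukaya--Oh--Ohta--Ono \cite{MR2573826}. Because $X_{\Gamma}$ is Gorenstein Fano and admits a small resolution by \cite[Section 8]{MR3211821}, one is precisely in the setting where the Maslov index two classes $\beta$ with $n_{\beta} \ne 0$ are exactly the basic disk classes $\beta_i$, one for each facet $\{ \ell_i = 0 \}$ of $\Delta_{\Gamma}$, each with $n_{\beta_i} = 1$; here one uses that a Maslov index two holomorphic disk in a toric Fano meets a single toric divisor transversally at one point, and that the small resolution rules out contributions of Maslov index zero components supported on the singular locus (this is the content of \cite[Section 9]{MR2609019}, whose conclusion for us is Theorem~\ref{th:toric_deg1} and Corollary~\ref{cr:toric_deg2}).

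Third, I would evaluate the two factors of $z_{\beta_i}$ in \eqref{eq:z_beta}. The symplectic area is $\int_{\beta_i} \omega = \ell_i(\bsu)$, the affine distance from $\bsu$ to the $i$-th facet of $\Delta_{\Gamma}$, since the gradient--Hamiltonian flow preserves symplectic areas and $X_{\Gamma}$ carries the moment polytope $\Delta_{\Gamma}$; and the holonomy of $\bsx \in H^1(L(\bsu); \sqrt{-1}\bR)$ along $\partial \beta_i$ contributes $e^{\langle \bsv_i, \bsx \rangle}$, where $\bsv_i$ is the primitive inward normal to that facet. Summing over $i$ gives \eqref{eq:po}--\eqref{eq:po2}. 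To make this completely explicit one only needs that the facets of $\Delta_{\Gamma}$ are exactly those cut out by the triangle inequalities \eqref{eq:triangle_ineq1}--\eqref{eq:triangle_ineq3}, which is the Corollary following \eqref{eq:bending_coord}; the normals $\bsv_i$ and constants $\tau_i$ are then read off directly from those inequalities.

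The step I expect to be the main obstacle is the second one: transferring the toric open Gromov--Witten computation across the degeneration. One must ensure that the identification of moduli spaces is not merely set-theoretic but compatible with the Kuranishi structures and virtual fundamental chains used to define $n_{\beta}$, and that no stable disk in $\Gr(2,n)$ degenerates onto the singular locus of $X_{\Gamma}$ in a way invisible on the toric side. This is exactly why the Fano condition and the small resolution are essential, and it is precisely what \cite[Proposition 9.16]{MR2609019} (our Theorem~\ref{th:toric_deg1}) provides; once it is in hand, the remaining bookkeeping of normals and areas is routine.
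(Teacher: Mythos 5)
Your proposal is correct and follows essentially the same route as the paper, which states this result as a quotation of \cite[Theorem 8.1]{MR3211821} and justifies it exactly by combining the Cho--Oh and Fukaya--Oh--Ohta--Ono toric computation with the moduli-space identification of Theorem \ref{th:toric_deg1}, the latter being available because $X_\Gamma$ is Fano and admits a small resolution. The points you flag as delicate (compatibility of the identification with the virtual counts, and ruling out degenerations onto the singular locus) are precisely what the cited \cite[Proposition 9.16]{MR2609019} is invoked for.
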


By setting 
$y_{ij} = T^{u_{ij}}e^{x_{ij}}$ for  $\edg(i,j) \in \Prn \Gamma$
and $q = T^{\lambda}$,
we have a Laurent polynomial
\begin{equation}
  W_{\Gamma} \colon 
  (\bG_m)^{2n-4} \longrightarrow \bA^1
\end{equation}
in $\bsy = (y_{ij})_{\edg(i,j)\in \Prn \Gamma}$
defined by
\begin{align}
 \po_\Gamma(L(\bsu), \bsx)
  &= W_\Gamma(\bsy, q).
\end{align}
For $1 \le i < j \le n$, define a new variable $y(i,j)$
corresponding to $u(i,j)$ by
\begin{equation}
  y(i,j) = \begin{cases}
    y_{i, i+1}^{1/2}, & j=i+1 < n+1,\\
    q / ( \prod_{k=1}^{n-1} y_{k, k+1})^{1/2}, 
      & (i,j) = (1,n), \\ 
    y_{ij} / ( \prod_{k=i}^{j-1} y_{k, k+1})^{1/2} ,
      & |i-j| \ge 2.
  \end{cases}
\end{equation}
Then $W_{\Gamma}$ is given by
\begin{equation}
  W_{\Gamma} = \sum \left( \frac{y(i,j)y(j,k)}{y(i,k)} + 
  \frac{y(i,j)y(i,k)}{y(j,k)} + \frac{y(i,k)y(j,k)}{y(i,j)} \right),
\end{equation}
where the sum is taken over all triangles in the triangulation.

\begin{example}
Recall that
the polytope $\Delta_{\Gamma_{\cat}}$ 
corresponding to  the caterpillar $\Gamma_{\cat}$
is given by
\eqref{eq:GC}.
Then the potential function is given by
\begin{multline} \label{eq:po_caterpillar}
  W_{\Gamma_\cat} 
  = \frac{y_{1, 3}}{y_{1,2}} + \frac{y_{1,4}}{y_{1,3}} + \dots
  + \frac{y_{1, n-1}}{y_{1, n-2}} + \frac{q}{y_{1, n-1}} \\
  + \frac{y_{1, 2} y_{1, 3}}{\prod_{k=1}^2 y_{k, k+1}}
  + \frac{y_{1, 3} y_{1, 4}}{\prod_{k=1}^3 y_{k, k+1}} + \dots 
  + \frac{y_{1, n-2} y_{1, n-1}}{\prod_{k=1}^{n-2} y_{k, k+1}}
  + \frac{q y_{1, n-1}}{\prod_{k=1}^{n-1} y_{k, k+1}} \\
  + \frac{y_{1,2} y_{2,3}}{y_{1,3}}
  + \frac{y_{1,3} y_{3,4}}{y_{1,4}} + \dots 
  + \frac{y_{1, n-2} y_{n-2, n-1}}{y_{1, n-1}}
  + \frac{y_{1, n-1} y_{n-1, n}}{q} .
\end{multline}
\end{example}

\begin{proposition}  \label{pr:geom-lift}
Let $\Gamma$ and $\Gamma'$ be two triangulation
related by a Whitehead move in a quadrilateral with vertices $a,b, c,d$ 
$(1 \le  a <b < c < d \le n)$.
Then the corresponding potential functions 
$W_{\Gamma}$, $W_{\Gamma'}$ are related by
the geometric lift
\begin{equation}
  y_{ac}y_{bd} = 
  \frac{y_{ab} y_{bc} y_{cd} y_{ad}}
         { y_{ab} y_{cd} + y_{ad} y_{bc} / \prod_{i=b}^{c-1} y_{i, i+1}}
\label{eq:geom-lift}
\end{equation}
of the piecewise linear transformation \eqref{eq:PL-transform}
in the sense of \cite{Berenstein-Zelevinsky_TPM}.
\end{proposition}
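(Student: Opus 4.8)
The plan is to pass from the action coordinates $y_{ij}$ to the ``bending'' variables $y(i,j)$, in which both the coordinate change and the potential functions take a cleaner form, then to localize the asserted identity to the quadrilateral of the Whitehead move, reducing it to the case of $\Gr(2,4)$, where it becomes a short calculation.

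First I would rewrite \eqref{eq:geom-lift} in the variables $y(i,j)$. Put $A = \prod_{k=a}^{b-1} y_{k,k+1}$, $B = \prod_{k=b}^{c-1} y_{k,k+1}$, $C = \prod_{k=c}^{d-1} y_{k,k+1}$, so that $y(a,b) = y_{ab}/A^{1/2}$, $y(b,c) = y_{bc}/B^{1/2}$, $y(c,d) = y_{cd}/C^{1/2}$, $y(a,d) = y_{ad}/(ABC)^{1/2}$, $y(a,c) = y_{ac}/(AB)^{1/2}$ and $y(b,d) = y_{bd}/(BC)^{1/2}$. A direct substitution shows that, in these variables, \eqref{eq:geom-lift} is equivalent (after cancelling the common factor $B\,(AC)^{1/2}$) to the ``untwisted'' relation
\[
  y(a,c)\,y(b,d) = \frac{y(a,b)\,y(b,c)\,y(c,d)\,y(a,d)}{y(a,b)\,y(c,d) + y(a,d)\,y(b,c)}.
\]
Taking valuations of this relation (products become sums and $X+Y$ becomes $\min$) reproduces exactly \eqref{eq:PL-transform2}, hence \eqref{eq:PL-transform} via \eqref{eq:bending_coord}; this already shows that \eqref{eq:geom-lift} is a geometric lift of \eqref{eq:PL-transform} in the sense of \cite{Berenstein-Zelevinsky_TPM}, so only the invariance $W_{\Gamma'} = W_{\Gamma}$ remains.

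Next I would localize that invariance. In the triangle expansion $W_{\Gamma} = \sum(\cdots)$, the Whitehead move only trades the two triangles $(a,b,c)$, $(a,c,d)$ of $\Gamma$ — which share the diagonal $\edg(a,c)$ — for the two triangles $(a,b,d)$, $(b,c,d)$ of $\Gamma'$ — which share $\edg(b,d)$; all remaining triangles, hence all remaining monomials, are common to $W_{\Gamma}$ and $W_{\Gamma'}$. Since \eqref{eq:geom-lift} fixes $y_{ij}$ for every $\edg(i,j) \in \Prn \Gamma''$, it fixes $y(i,j)$ for every edge of $\Gamma''$, in particular for the four sides $\edg(a,b), \edg(b,c), \edg(c,d), \edg(a,d)$ of the quadrilateral. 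Hence $W_{\Gamma'} = W_{\Gamma}$ is equivalent to
\[
  \frac{pq}{e} + \frac{pe}{q} + \frac{qe}{p} + \frac{er}{s} + \frac{es}{r} + \frac{rs}{e}
  = \frac{pf}{s} + \frac{ps}{f} + \frac{sf}{p} + \frac{qr}{f} + \frac{qf}{r} + \frac{rf}{q},
\]
with $p = y(a,b)$, $q = y(b,c)$, $r = y(c,d)$, $s = y(a,d)$, $e = y(a,c)$, $f = y(b,d)$ and $ef\,(pr+qs) = pqrs$; this is exactly the statement for $\Gr(2,4)$, with $\Gamma, \Gamma'$ the two triangulations of a square.

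Finally I would prove this last identity. Eliminating $e = pqrs\,(pr+qs)^{-1}f^{-1}$, both sides become Laurent polynomials in $f$ of the shape $\alpha f + \beta f^{-1}$, and equating the coefficients of $f$ and of $f^{-1}$ reduces the identity to the two elementary equalities
\[
  (pq+rs)(pr+qs) = (p^2+s^2)qr + (q^2+r^2)ps, \qquad
  (ps+qr)(pr+qs) = (p^2+q^2)rs + (r^2+s^2)pq,
\]
each of which follows by expanding. The only step that demands care is the localization: one must check that the bending coordinates attached to the edges of $\Gamma''$ (especially the four sides of the quadrilateral) are genuinely untouched by \eqref{eq:geom-lift}, so that $W_{\Gamma}$ and $W_{\Gamma'}$ agree monomial by monomial away from the quadrilateral, and one must match the six triangle monomials to the correct edges of the quadrilateral. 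I do not anticipate any genuine difficulty beyond this bookkeeping; the algebra in the other steps is short.
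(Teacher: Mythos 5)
Your proposal is correct and follows essentially the same route as the paper's proof: pass to the variables $y(i,j)$, observe that all monomials of $W_\Gamma$ and $W_{\Gamma'}$ outside the quadrilateral coincide and are fixed by \eqref{eq:geom-lift}, and verify the remaining six-term identity by grouping the monomials according to their power of the diagonal variable. The paper organizes the final computation by factoring the coefficient of $y(a,c)$ (resp. $y(b,d)$) as $(y_1y_3+y_2y_4)(y_1y_4+y_2y_3)/y_1y_2y_3y_4$ (resp. $(y_1y_3+y_2y_4)(y_1y_2+y_3y_4)/y_1y_2y_3y_4$), which makes the substitution transparent, but this is the same calculation as your coefficient comparison in $f^{\pm 1}$.
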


\begin{proof}
Setting
$y_1 = y(a,b)$, $y_2 = y(b,c)$, $y_3 = y(c,d)$, $y_4 = y(a,d)$ and
$y = y(a,c)$, $y' = y(b,d)$, 
the potential functions corresponding 
to $\Gamma$ and $\Gamma'$ can be written as
\begin{align}
  W_{\Gamma} &= \frac{y_1y}{y_2} + \frac{y_2y}{y_1} + \frac{y_1y_2}{y} +
  \frac{y_3y}{y_4} + \frac{y_4y}{y_3} + \frac{y_3y_4}{y} + F(\bsy) \\
  &= \frac{(y_1y_3+y_2y_4)(y_1y_4+y_2y_3)}{y_1y_2y_3y_4} \cdot y
    + \frac{y_1y_2+y_3y_4}{y} + F(\bsy), \\
  W_{\Gamma'} 
  &= \frac{(y_1y_3+y_2y_4)(y_1y_2+y_3y_4)}{y_1y_2y_3y_4} \cdot y'
    + \frac{y_1y_4+y_2y_3}{y'} + F(\bsy),
\end{align}
for a Laurent polynomial $F(\bsy)$ independent of $y, y'$.
\eqref{eq:PL-transform2} implies that 
the coordinate change 
\eqref{eq:geom-lift} is given by
\begin{equation}
  y' = \frac 1y \cdot \frac{y_1y_2y_3y_4}{y_1y_3+y_2y_4},
\end{equation}
which transforms $W_{\gamma}$ into $W_{\Gamma'}$.
\end{proof}


\section{Cluster algebras}

The homogeneous coordinate ring $\bC[\Gr(2,n)]$
is generated by $\{ p_{ij} \}_{1 \le i < j \le n}$
with Pl\"ucker relations \eqref{eq:pluecker}.\footnote{%
The Grassmannian $\Gr(2,n) = \Gr(2, \bC^n)$ in this section is
canonically identified with its dual Grassmannian
$\Gr(n-2, (\bC^n)^*)$, which appears in the B-model side.
Hence the indices of the Pl\"ucker coordinates are
labels of vertices of $P$.}
It is a prototypical example
of a \emph{cluster algebra defined by a quiver},
also known as \emph{a skew-symmetric cluster algebra of geometric type}.
The notion of cluster algebras
is introduced in \cite{MR1887642}.
The cluster algebra structure on the homogeneous coordinate ring
of $\Gr(2,n)$ is established in \cite{MR2004457},
which is generalized to $\Gr(k, n)$ in \cite{MR2205721}.
It is also the cluster algebra
associated with a disk with $n$ marked points on the boundary,
which is a special case of  a cluster algebra
associated with a bordered surface with marked points
\cite{MR2233852,MR2130414,MR2448067}.

A \emph{quiver} $Q = (Q_0, Q_1, s, t)$
consists of a set $Q_0$ of \emph{vertices},
a set $Q_1$ of \emph{arrows}, and
two maps $s, t \colon Q_1 \to Q_0$
sending an arrow
to its \emph{source} and \emph{target} respectively.

A disk with $n$ marked points is homeomorphic
to the reference polygon $P$ with $n$ sides.
With each triangulation $\Gamma$ of the reference polygon $P$,
we associate a quiver $Q_\Gamma$
as shown in \pref{fg:triangulation2}.
\begin{figure}
\centering
\includegraphics[width=.5 \linewidth, bb=0 0 259 199]{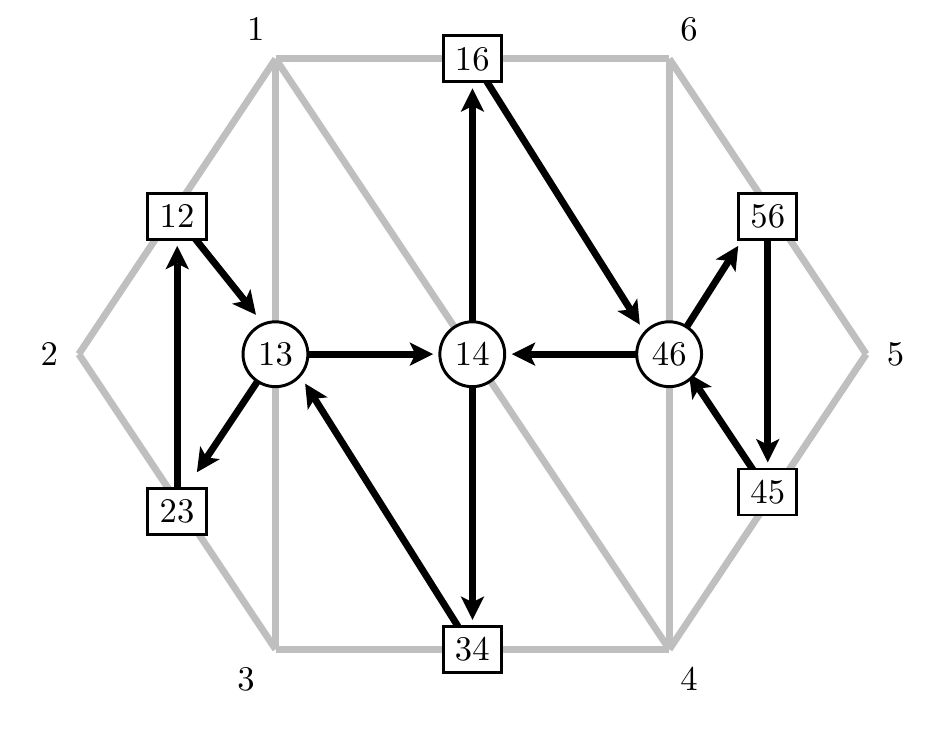}
\caption{A quiver}
\label{fg:triangulation2}
\end{figure}
The arrows in $Q_{\Gamma}$ are oriented 
in such a way that each cycle contained in a triangle 
of the triangulation is oriented clockwise.
Boxed vertices on the boundary
corresponding to edges of $P$ are \emph{frozen},
and circled vertices in the interior
corresponding to diagonals in $\Gamma$ are \emph{mutable}.
For a mutable vertex $v$ in the quiver $Q_\Gamma$,
the mutated quiver $\mu_v(Q_\Gamma)$ is constructed in three steps:
\begin{enumerate}
 \item
For each path $u \to v \to w$ of length two
passing through $v$,
add a new arrow $u \to w$.
 \item
Reverse all arrows with source or target $v$.
 \item
Annihilate pairs of arrows $a, b$
with $s(a) = t(b)$ and $t(a) = s(b)$,
in such a way that no oriented 2-cycle
(i.e., a path of length two with the same source and the target)
remains.
\end{enumerate}
One can easily see that
Whitehead moves
of triangulations correspond to
mutations of associated quivers.

We name mutable vertices as $v_1, \ldots, v_{n-3}$,
and frozen vertices as $v_{n-2}, \ldots, v_{2n-3}$.
With each vertex $v_i$,
we associate a variable $\rx_i$,
which is called a \emph{cluster variable} if $i=1, \ldots, n-3$ and
a \emph{frozen} (or \emph{coefficient}) \emph{variable}
if $i=n-2, \ldots, 2n-3$.
The sequence $\bfx = (\rx_1, \ldots, \rx_{n-3})$
is called a \emph{cluster}.
The pair $(\bfx, Q_\Gamma)$ of a cluster and a quiver is called
a \emph{labeled seed}.
Under the mutation $\mu_v$ of the quiver $Q_\Gamma$
at the vertex $v$,
the labeled seed is transformed as
$(\bfx, Q_\Gamma) \mapsto (\bfx', \mu_v(Q_\Gamma))$,
where $\rx_w' = \rx_w$ for $v \ne w$ and
\begin{align} \label{eq:mutation}
 \rx'_v \rx_v = \prod_{s(a) = v} \rx_{t(a)} + \prod_{t(a) = v} \rx_{s(a)}.
\end{align}
The \emph{cluster algebra}
is the $\bZ[\rx_{n-2}, \ldots, \rx_{2n-3}]$-subalgebra
of the \emph{ambient field}
$\bQ(\rx_1, \ldots, \rx_{2n-3})$
generated by cluster variables
in all the seeds
obtained from the initial seed
$(\bfx, Q_\Gamma)$ by any sequence of mutations.
One can easily see that
the cluster transformation \eqref{eq:mutation}
for the Whitehead move
interchanging the diagonals $d_{ik}$ and $d_{jl}$
gives exactly the Pl\"ucker relation \eqref{eq:pluecker}.
It follows that the cluster algebra in this case
is the homogeneous coordinate ring
of $\Gr(2,n)$.

\section{Landau--Ginzburg mirrors}

The mirror of $\Gr(2,n)$ is identified
with the Landau--Ginzburg model
\begin{align} \label{eq:MR2}
 \lb \Xv, \ 
 W = \sum_{i=1}^n \frac{p_{i,i+2}}{p_{i,i+1}} q^{\delta_{i,n-1}}
  \colon \Xv \to 
  \bA^1
  \rb
\end{align}
by Marsh and Rietsch
\cite{1307.1085},
where 
$\delta_{i, n-1}$ is the Kronecker delta, and
$\Xv := \Gr(2,n) \setminus D$
is the complement of an anti-canonical divisor
\begin{align}
 D = \{ p_{12}=0 \} \cup \{ p_{23}=0 \} \cup \cdots \cup \{ p_{n-1,n}=0 \}
  \cup \{ p_{1,n}=0 \}.
\end{align}
Here $(p_{ij})_{1 \le i<j \le n}$ is the Pl\"ucker coordinate on $\Gr(2,n)$
and $q = T^{\lambda}$
is an element of the quotient field $\Lambda$
of the Novikov ring $\Lambda_0$.
The arrow $W$ in \pref{eq:MR2} is a morphism of algebraic varieties
over $\Lambda$.
The Landau--Gizburg model \pref{eq:MR2} is a special case of \cite{MR2397456},
where Landau--Ginzburg mirrors
of general flag varieties are introduced.

An open subspace of this Landau--Ginzburg model
is given earlier in
\cite{MR1439892,
MR1619529,
MR1756568}:
Consider a quiver $Q = (Q_0, Q_1, s, t)$ of the form
\begin{equation} \label{eq:ladder}
\begin{alignedat}{17}
  \ldbox{q} &&&& \ldbox{\dfrac{p_{1,n}}{p_{12}}} \\
  & \nwarrow && \swarrow && \nwarrow &&&&&&&&&&& \\
  && \ldbox{\dfrac{p_{n-1,n}}{p_{1,n-1}}} &&&& 
       \ldbox{\dfrac{p_{1,n-1}}{p_{12}}} \\
  &&& \nwarrow && \swarrow && \nwarrow &&&&&&&&& \\
  &&&& \ldbox{\dfrac{p_{n-2,n-1}}{p_{1,n-2}}} &&&& 
             \ldbox{\dfrac{p_{1,n-2}}{p_{12}}} \\
  &&&&& \nwarrow && \swarrow && \nwarrow &&&&&&& \\
  &&&&&& \ldbox{\dndots} &&&& \ldbox{\dndots} &&&& 
                \ldbox{1} && \\
  &&&&&&& \nwarrow && \swarrow && \nwarrow && \swarrow \\
  &&&&&&&& \ldbox{\dfrac{p_{34}}{p_{13}}} &&&& 
                     \ldbox{\dfrac{p_{13}}{p_{12}}} \\
  &&&&&&&&& \nwarrow && \swarrow &&&&&& \\
  &&&&&&&&&& \ldbox{\dfrac{p_{23}}{p_{12}}} \\
\end{alignedat},
\end{equation}
where vertices are Laurent monomials 
in the Pl\"ucker coordinate.
It is shown in \cite[Proposition 5.9]{1307.1085}
that the restriction of the Landau--Ginzburg potential
\eqref{eq:MR2}
to the torus in $\Gr(2,n)$ defined by
$p_{i,i+1} \ne 0$ for $i=1, \ldots, n-1$ and
$p_{1,i} \ne 0$ for $i = 1, \ldots, n-1$
is given by
\begin{align} \label{eq:BCKv}
 W = \sum_{a \in Q_1} \frac{t(a)}{s(a)}.
\end{align}

For each triangulation $\Gamma$ of the reference polygon,
define an open embedding 
$\iota_{\Gamma} \colon  U_{\Gamma} \coloneqq (\bG_m)^{\Prn \Gamma}
\hookrightarrow \check{X}$
by
\begin{equation}
  y_{ij} = 
  \begin{cases}
    \displaystyle{ q \frac{p_{1n}}{p_{in}},} \quad 
      & \text{$i=2, 3, \dots, n-1$ and $j=n$,}\\[10pt]
    \displaystyle{\frac{p_{j, j+1}}{p_{ij}},} 
      & \text{otherwise}.
  \end{cases}
  \label{eq:emb_U}
\end{equation}

\begin{remark}
Applying \eqref{eq:emb_U} formally
to the case $(i,j) = (1,n)$, we obtain
$y_{1n} = q$, which is consistent with 
the fact that $\psi_{1n} = \lambda$ is constant.
\end{remark}

\begin{theorem} \label{th:cluster_transf}
\begin{enumerate}
\item
For each triangulation $\Gamma$ of the reference polygon,
the potential function $W_{\Gamma}$ is the restriction 
of the Marsh-Rietsch superpotential \eqref{eq:MR2};
\begin{equation}
  W_{\Gamma} = \iota_{\Gamma}^* W.
\end{equation}
\item
Let $\Gamma$ and $\Gamma'$ be two triangulation
related by a Whitehead move in a quadrilateral with 
vertices $a,b, c, d$   $(1 \le  a <b < c < d \le n)$.
Then the transformation \eqref{eq:geom-lift} is equivalent
to the Pl\"ucker relation
\begin{equation}
  p_{ac}p_{bd} = p_{ab}p_{cd} + p_{ad}p_{bc}.
  \label{eq:pluecker_rel}
\end{equation}
under the coordinate change \eqref{eq:emb_U}.
\end{enumerate}
\end{theorem}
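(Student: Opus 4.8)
The plan is to verify both parts by direct substitution using the explicit formulas already assembled in the excerpt, exploiting the combinatorial description of paths in the dual tree. For part (1), I would fix a triangulation $\Gamma$ and use the formula
\begin{equation}
  W_{\Gamma} = \sum_{\text{triangles } ijk} \left( \frac{y(i,j)y(j,k)}{y(i,k)} + \frac{y(i,j)y(i,k)}{y(j,k)} + \frac{y(i,k)y(j,k)}{y(i,j)} \right),
\end{equation}
together with the definition of $y(i,j)$ in terms of $y_{ij}$ and $q$. The key point is that each of the three ratios attached to a triangle $ijk$, when rewritten via $y(i,j) = y_{ij}/(\prod_{k\in I(i,j)} y_{k,k+1})^{1/2}$ (and the boundary/diagonal special cases), collapses to a \emph{single} Laurent monomial in the $y_{ij}$ and $q$ with no square roots left: the half-integral exponents cancel because each interior edge of the path structure is traversed an even number of times around a triangle. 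Then I would match each such monomial, via the substitution \eqref{eq:emb_U}, with one summand $t(a)/s(a)$ of the Marsh--Rietsch potential \eqref{eq:MR2}--\eqref{eq:BCKv}. Concretely, I expect the three terms of a triangle $ijk$ to correspond to the three Pl\"ucker-monomial quotients of the form $p_{j,j+1}/p_{ij}$ etc., and the full sum over all $n-2$ triangles to reassemble exactly $\sum_{i=1}^n p_{i,i+2}/p_{i,i+1}\, q^{\delta_{i,n-1}}$. A clean way to organize this is to treat the caterpillar case first, where \eqref{eq:po_caterpillar} can be compared termwise with the ladder quiver \eqref{eq:ladder}, and then invoke part (2) to propagate the identity to an arbitrary triangulation by a sequence of Whitehead moves (using that any two triangulations are connected by such moves).

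For part (2), the strategy is to start from the geometric-lift relation \eqref{eq:geom-lift},
\begin{equation}
  y_{ac}y_{bd} = \frac{y_{ab} y_{bc} y_{cd} y_{ad}}{y_{ab} y_{cd} + y_{ad} y_{bc}/\prod_{i=b}^{c-1} y_{i,i+1}},
\end{equation}
substitute the formula \eqref{eq:emb_U} for each $y_{ij}$ appearing, clear denominators, and check that the resulting identity among Pl\"ucker monomials is precisely \eqref{eq:pluecker_rel}, i.e. $p_{ac}p_{bd} = p_{ab}p_{cd} + p_{ad}p_{bc}$, possibly after multiplying through by a common monomial in the $p_{j,j+1}$ and in $q$. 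The bookkeeping is whether the index $a$ equals $1$ or $d$ equals $n$, since \eqref{eq:emb_U} treats $j=n$ specially (inserting a factor $q\,p_{1n}/p_{in}$); I would handle the generic case $2\le a<b<c<d\le n-1$ first, where every $y_{ij}=p_{j,j+1}/p_{ij}$, and then treat the boundary cases $d=n$ (and $c=n$, giving $y_{cd}$ or $y_{ad}$ of the special form) separately, verifying that the extra $q$ factors balance — this is consistent with the remark that $y_{1n}=q$ and $\psi_{1n}=\lambda$ is constant.

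The main obstacle I anticipate is \emph{not} the algebra of either verification individually but the square-root bookkeeping in part (1): showing rigorously that each triangle term is an honest Laurent monomial requires tracking, for a fixed triangle $ijk$ in $\Gamma$, exactly which leaves $\epsilon(\ell,\ell+1)$ lie in $I(i,j)$, $I(j,k)$, $I(i,k)$, and using the nesting relation $I(i,j)\sqcup I(j,k)=I(i,k)$ (for $i<j<k$) so that, e.g., in the ratio $y(i,j)y(j,k)/y(i,k)$ the product $\prod_{I(i,j)}\prod_{I(j,k)}=\prod_{I(i,k)}$ makes all half-powers disappear. Once that is set up, matching monomials to the Marsh--Rietsch summands is essentially a relabeling exercise, and it is cleanest to reduce the general triangulation to the caterpillar via part (2), so that part (1) only needs to be checked by hand in the single explicit case \eqref{eq:po_caterpillar} versus \eqref{eq:ladder}. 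I would therefore prove (2) first, then (1) for the caterpillar, then (1) in general by induction on the number of Whitehead moves separating $\Gamma$ from $\Gamma_{\cat}$, using Proposition \ref{pr:geom-lift} to see that $\iota_\Gamma^*W$ transforms the same way $W_\Gamma$ does under a flip.
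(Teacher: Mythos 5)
Your proposal is correct and follows essentially the same route as the paper: verify part (2) by rewriting \eqref{eq:geom-lift} in reciprocal form and substituting \eqref{eq:emb_U}, check part (1) by hand for the caterpillar by comparing \eqref{eq:po_caterpillar} with the ladder quiver \eqref{eq:ladder}, and then propagate part (1) to an arbitrary triangulation via Whitehead moves using Proposition \ref{pr:geom-lift} together with part (2). The triangle-by-triangle monomial analysis in your first paragraph is an unnecessary detour that you yourself correctly set aside in favor of the caterpillar-plus-flips argument.
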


\begin{proof}
In the case of caterpillar $\Gamma_{\cat}$, 
it is straightforward to see from 
\eqref{eq:po_caterpillar} (or \eqref{eq:GC}) and \eqref{eq:ladder} 
that the Landau--Ginzburg potential
\eqref{eq:BCKv}
is identified with the potential function
\eqref{eq:po}
under the coordinate change
\eqref{eq:emb_U}.
One can also easily check that 
the coordinate change \eqref{eq:geom-lift}, 
which can be written as 
\begin{equation}
  \frac{1}{y_{ac}y_{bd}} 
  = \frac{1}{y_{ab} y_{cd} \prod_{i=b}^{c-1} y_{i,i+1}}
  + \frac{1}{y_{ad}y_{bc}},
\end{equation}  
is equivalent to the Pl\"ucker relation \eqref{eq:pluecker_rel}.
This implies that $U_{\Gamma}$ and $U_{\Gamma'}$ are
glued together in $\check{X} \subset \Gr(2,n)$.
Since any triangulation is related to the caterpillar
by a sequence of Whitehead moves,
Proposition \ref{pr:geom-lift} prove the first statement 
for any $\Gamma$.
\end{proof}

\begin{remark}
The union $\bigcup_{\Gamma} U_{\Gamma}$ 
does not cover the whole $\check{X}$ in general.
In the case of $\Gr(2,4)$, 
the complement of the open subset
$\bigcup_{\Gamma} U_{\Gamma}$
is given by
\begin{equation}
  \check{X} \setminus \bigcup_{\Gamma} U_{\Gamma}
  = \{ [p_{ij}] \in \check{X} \mid p_{13} = p_{24} = 0 \}
  \cong (\bG_m)^2.
\end{equation}
In this case,
the superpotential $W$ has two critical points 
with zero critical value, which are contained in 
this complement.
We expect that singular Lagrangian fibers
of the Lagrangian torus fibration
interpolating $\Psi_\Gamma$ for different $\Gamma$
correspond to points in this complement
under homological mirror symmetry
just as in \cite[Section 8]{MR3439224}.
See also \cite{MR3601889,1704.07213,1801.07554}.
\end{remark}

\section{Wall-crossing formula}

Let $\Phi \colon X \to B$ be a Lagrangian torus fibration
on a symplectic manifold $(X, \omega)$ 
possibly with singular fibers.

\begin{definition}
A Lagrangian fiber $L(\bsu) = \Phi^{-1}(\bsu)$ is said to be
\emph{potentially obstructed} if it bounds a pseudo-holomorphic 
disk of Maslov index zero.
The set of $\bsu \in B$ with potentially obstructed fiber $L(\bsu)$ 
is called a \emph{wall}.
\end{definition}

We assume that any wall has codimension one, and 
any Lagrangian fiber $L(\bsu)$ satisfies the 
following conditions(\cite[Assumptions 3.2,  3.8]{MR2386535}):
\begin{itemize}
\item 
there are no non-constant holomorphic sphere $v \colon \bP^1 \to X$ with 
$c_1(TX)\cdot [v] \le 0$;
\item
holomorphic disks of Maslov index two in $(X, L(\bsu))$ are regular;
\item
all simple (non multiply covered) non-constant holomorphic disks 
in $(X, L(\bsu))$ of Maslov index zero are regular,
and the associated evaluation maps 
at boundary marked points
are transverse to each other
and to the evaluation maps
at boundary marked points
of holomorphic disks
of Maslov index 2.
\end{itemize}
Let $U_0, U_1 \subset B$ be two chambers separated by a wall
on which each fiber bounds a unique non-constant 
pseudo-holomorphic disk
of Maslov index zero.
Let $\alpha \in \pi_2(X, L(\bsu))$ be the class of such a disk.

\begin{proposition}[Auroux {\cite[Proposition 3.9]{MR2386535}}]
For $\bsu_0 \in U_0$ and $\bsu_1 \in U_1$, 
the functions $z_{\beta}$ given in \eqref{eq:z_beta} 
defined for $L(\bsu_0)$ and $L(\bsu_1)$ are related by
\begin{equation}
  z_{\beta} \longmapsto z_{\beta} 
  h(z_{\alpha})^{[\partial \alpha] \cdot [\partial \beta]}
  \label{eq:wallcrossing_formula}
\end{equation}
for a function $h(z_{\alpha}) = 1 + O(z_{\alpha})$. 
\end{proposition}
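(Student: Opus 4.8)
The plan is to follow Auroux's original argument \cite{MR2386535} closely, adapting it to the present setting where the hypotheses recorded above guarantee that all the relevant moduli spaces behave as in the toric-like case. The key point is to understand how the count of Maslov-index-two disks bounded by $L(\bsu)$ jumps as $\bsu$ crosses the wall, and to package this jump into the transformation \eqref{eq:wallcrossing_formula} of the functions $z_\beta$. First I would fix a path in $B$ from $\bsu_0 \in U_0$ to $\bsu_1 \in U_1$ crossing the wall transversally at a single point $\bsu_\ast$, and note that over the wall the fiber $L(\bsu_\ast)$ bounds a unique simple Maslov-zero disk in the class $\alpha$ (up to the automorphisms of the domain), with its boundary class $[\partial\alpha] \in H_1(L(\bsu_\ast);\bZ)$ well defined and locally constant on each chamber after the identification of nearby fibers.

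The core of the argument is a gluing/degeneration analysis: as $\bsu$ approaches the wall from $U_0$, certain holomorphic disks of Maslov index two break into a configuration consisting of a Maslov-two disk in class $\beta - k\alpha$ together with $k$ copies of the Maslov-zero disk in class $\alpha$ attached along the boundary, and conversely such broken configurations can be glued back to honest Maslov-two disks on the $U_1$ side. The transversality assumptions in the bulleted list (regularity of Maslov-two disks, regularity of simple Maslov-zero disks, and transversality of the boundary evaluation maps, together with the absence of holomorphic spheres of non-positive Chern number) ensure that these gluings are unobstructed and that no other bubbling — in particular no sphere bubbles and no higher Maslov-index disk bubbles — can occur. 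Counting the gluing parameters and summing over $k$ produces, for each $\beta$, a contribution proportional to $h(z_\alpha)^{[\partial\alpha]\cdot[\partial\beta]}$, where the exponent $[\partial\alpha]\cdot[\partial\beta]$ records the intersection number of boundary classes on the torus $L(\bsu)$ — this is precisely the combinatorial data that controls how many of the $k$ bubbles can be inserted and with what sign, and $h(z_\alpha) = 1 + O(z_\alpha)$ is the generating function $\sum_k m_k z_\alpha^k$ of these bubble insertions (with $m_0 = 1$ because at $k=0$ one recovers the original disk). Matching this with \eqref{eq:z_beta} and the definition $z_{\beta}(b) = \hol_b(\partial\beta)\,T^{\int_\beta\omega}$, the exponent of $T$ shifts by $k\int_\alpha\omega$ and the holonomy factor by $\hol_b(\partial\alpha)^k$, which together are exactly $z_\alpha^k$; this is why the substitution takes the multiplicative form \eqref{eq:wallcrossing_formula}.

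The main obstacle I expect is the bookkeeping in the gluing step: one must show that the relevant compactified moduli space $\overline{\scM}_1(X, L(\bsu);\beta)$ is, near the wall, cut out cleanly with boundary strata exactly the broken configurations described, so that the wall-crossing comparison of the two fundamental-chain pushforwards under the evaluation map is governed solely by these strata. Controlling orientations and signs so that the bubble-insertion series assembles into a single power series $h(z_\alpha)$ independent of $\beta$ (rather than a $\beta$-dependent mess) is the delicate point, and this is where one leans hardest on the transversality hypotheses and on the fact — guaranteed here by the structure of $\Psi_t$, which has only one wall — that there is a single Maslov-zero class $\alpha$ to track. Once the single-wall case is set up, the statement is local along the wall and follows by a direct comparison, exactly as in \cite[Proposition 3.9]{MR2386535}; I would simply cite that argument for the sign and orientation verifications rather than reproduce them.
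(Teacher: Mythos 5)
The paper gives no proof of this proposition: it is quoted directly from Auroux \cite[Proposition 3.9]{MR2386535}, with only the added remark that it also follows from the isomorphism of filtered $A_\infty$-algebras under change of almost complex structure in \cite{MR2553465}. Your sketch is a faithful outline of Auroux's own bubbling/gluing argument (breaking of Maslov-two disks along the boundary of the Maslov-zero disk, with multiplicity governed by $[\partial\alpha]\cdot[\partial\beta]$) and, like the paper, ultimately defers the analytic, sign, and orientation details to that reference, so it is consistent with the paper's treatment.
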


This wall-crossing formula also follows from the isomorphism of filtered $A_\infty$-algebras
associated with the change of almost complex structures
given in \cite{MR2553465}.

Now we recall an example of the wall-crossing formula 
given by Auroux in
\cite[Section 5]{MR2386535} and 
\cite[Section 3.1]{MR2537081}.

\begin{example} \label{eg:Auroux}
Consider the $\bCx$-action on
\begin{equation}
  Y = \{ (x_1, x_2, x_3) \in \bC^3 \mid x_1 x_2 = 1+ x_3 \} 
  \cong \bC^2 = \bC_{x_1} \times \bC_{x_2}
\label{eq:Y}
\end{equation}
defined by
\begin{equation}
  \tau \cdot (x_1,x_2,x_3) = (\tau x_1, \tau^{-1}x_2, x_3), \quad
  \tau \in \bCx.
\end{equation}
Then the projection 
$f \colon Y \to \bC$, $(x_1,x_2,x_3) \mapsto x_3$
gives the GIT quotient with respect to the $\bCx$-action.
Equip $Y$ with an $S^1 (\subset \bCx)$-invariant
symplectic form, and let $\mu_{S^1} \colon Y \to \bR$
be the moment map of the $S^1$-action.
Define a Lagrangian torus fibration on $Y$ by
\begin{equation}
  \Phi \colon Y \to \bR_{\ge 0} \times \bR,
  \quad
  x= (x_1,x_2,x_3) \mapsto \lb \abs{x_3}, \mu_{S^1}(x) \rb,
\end{equation}
and write its fibers as
\begin{equation}
  T_{R,r} = \Phi^{-1}(R,r) =
  \{ x \in Y \mid 
  |x_3| = R, \,
  \mu_{S^1}(x) = r,
  \}.
  \label{eq:T_rR}
\end{equation}
This fibration has a unique singular fiber $T_{1,0}$ over $(1,0)$,
which is a two-torus with one circle pinched.
For $r \ne 0$, each fiber $T_{1,r}$ over the line $R=1$
intersects a coordinate axis of $Y$ viewed as  
$\bC_{x_1} \times \bC_{x_2}$ at a circle, 
and thus 
it bounds a holomorphic disk  of Maslov index zero, 
which has the form 
$D^2 \times \{0\}$ or $\{0\} \times D^2$
in  $\bC_{x_1} \times \bC_{x_2} \cong Y$.
Hence the wall of $\Phi$  is given by $R=1$.
Let $\alpha \in \pi_2(Y, T_{1,r})$ denote the class of 
disks of the form $D^2 \times \{0\}$.
Lagrangian fibers over the chambers $R > 1$ and $R<1$
are said to be of \emph{Clifford type} and
of \emph{Chekanov type}, respectively.
A Clifford type fiber $T_{r, R}$ 
can be deformed into a torus of the form
$S^1(r_1) \times S^1(r_2) \subset \bC_{x_1} \times \bC_{x_2}$,
which bounds holomorphic disks of Maslov index two
of the forms 
$D^2(r_1) \times \{ \mathrm{pt} \}$ 
and $\{ \mathrm{pt} \} \times D^2(r_2)$.
Let $\beta_1, \beta_2 \in \pi_2(Y, T_{r,R})$ be the classes
of these holomorphic disks.
On the other hand, a Chekanov type torus $T_{r, R}$ 
bounds a family of 
holomorphic disks of Maslov index two, 
which are
sections of $f \colon Y\to \bC$ over the disk $D^2(R)$
enclosed by the image 
$f(T_{r,R}) = \{ x_3 \in \bC \mid |x_3| = R \}$
of $T_{r,R}$.
Let $\beta_3$ denote its homotopy class.
Then the wall-crossing formula is given by
\begin{equation}
  z_{\beta_3} =
  \begin{cases}
    z_{\beta_2}(1+z_{\alpha}) \quad 
   & (r>0),\\
   z_{\beta_1}(1+z_{\alpha}^{-1})
   \quad 
  & (r<0).
\end{cases}
\label{eq:Auroux_wallcrossing}
\end{equation}
Since $z_{\alpha} = z_{\beta_1}/z_{\beta_2}$, 
the transformations \eqref{eq:Auroux_wallcrossing} 
on $r>0$ and $r<0$ are identical.
\end{example}

Let us go back to the case of the Grassmannian $\Gr(2,n)$.
Suppose we have two triangulation $\Gamma, \Gamma'$ of 
the reference polygon related by a Whitehead move
in a quadrilateral with vertices
$1\le a < b < c < d \le n$,
and let $\Gamma''$ be the subdivision given by common diagonals
in $\Gamma$ and $\Gamma'$.
Note that $\Psi_{\Gamma}$ and $\Psi_{\Gamma'}$ are written as
\begin{equation}
  \Psi_{\Gamma} =
  ((\psi_{ij})_{\edg(i,j) \in \Prn \Gamma''}, \psi_{ac}),
  \quad
  \Psi_{\Gamma'} =
  ((\psi_{ij})_{\edg(i,j) \in \Prn \Gamma''}, \psi_{bd}).
\end{equation}
Since 
$\varphi_{ac} = \psi_{ac} - \frac 12 \sum_{i=a}^{c-1} \psi_{i, i+1}$ 
and 
$\varphi_{bd} = \psi_{bd} - \frac 12 \sum_{i=b}^{d-1} \psi_{i, i+1}$ 
Poisson commute with
all other $\psi_{ij}$, $\edg(i,j) \in \Prn \Gamma''$, 
the function
\begin{equation}
  \psi_t = (1-t) (\varphi_{ac})^2 - t (\varphi_{bd})^2
\end{equation}
also Poisson commutates with the functions $\psi_{ij}$
for any $t \in [0,1]$:
\begin{equation}
  \{ \psi_t, \psi_{ij} \} = 0.
\end{equation}
Hence we obtain the following.

\begin{proposition}
Let $\Gamma$, $\Gamma'$ be two triangulations of $P$ 
as above.
Then 
\begin{equation}
  \Psi_t = \bigl( 
  (\psi_{ij})_{\edg(i,j) \in \Prn \Gamma''},
  \psi_{t} \bigr),
  \quad t \in [0,1]
\end{equation}
is a one-parameter family of completely integrable systems 
on $\Gr(2,n)$ connecting $\Psi_{\Gamma}$ and 
$\Psi_{\Gamma'}$.
\end{proposition}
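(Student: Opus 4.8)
The plan is to verify separately the three ingredients of the statement: that $\Psi_t$ consists of $2n-4$ Poisson-commuting functions, that its differential is generically of full rank, and that $\Psi_0$ and $\Psi_1$ recover $\Psi_\Gamma$ and $\Psi_{\Gamma'}$ up to a coordinate change on the base. The first is essentially bookkeeping. Since $\Gamma$ and $\Gamma'$ share $n-4$ diagonals, $\Gamma''$ has $n-4$ interior edges, so $\#\Prn\Gamma'' = 2n-5$ and $\Psi_t$ has $2n-4 = \dim_{\bC}\Gr(2,n)$ components; the functions $\psi_{ij}$ with $\edg(i,j)\in\Prn\Gamma''$ Poisson commute among themselves because they are part of the integrable system $\Psi_\Gamma$ (equivalently $\Psi_{\Gamma'}$), and $\{\psi_t,\psi_{ij}\}=0$ is precisely the computation made just before the statement. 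One should also note that $\varphi_{ac}^2$ and $\varphi_{bd}^2$ are polynomials in the entries of the Hermitian matrices $\mu_{G(a,c)}$ and $\mu_{G(b,d)}$ (unlike $\psi_{ac}=\lambda_1^{(a,c)}$, which is only Lipschitz), so $\psi_t$ is a smooth function on $\Gr(2,n)$, jointly smooth in $(x,t)$; this is why the squares appear, and it is what makes $\Psi_t$ a genuine one-parameter family.

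The substantive step, and the one I expect to be the \emph{main obstacle}, is generic functional independence. On the dense open locus where $(\psi_{ij})_{\edg(i,j)\in\Prn\Gamma''}$ is a submersion I would pass to the symplectic reduction by the Hamiltonian torus these functions generate; since $\varphi_{ac}$ and $\varphi_{bd}$ Poisson commute with them, they descend to the reduced space, which is two-dimensional and, via the Gelfand--MacPherson correspondence together with \eqref{eq:GC-bending}, is identified with the polygon space of spatial quadrilaterals with the (now fixed) side lengths $\varphi_{ab},\varphi_{bc},\varphi_{cd},\varphi_{ad}$ --- a copy of $\bP^1$, cf. \eqref{eq:GIT_X_0}. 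The key point is that $d(\varphi_{ac}^2)$ and $d(\varphi_{bd}^2)$ are \emph{linearly independent} on a dense open subset of this reduced surface: the bending flows along the two diagonals of a spatial quadrilateral do not Poisson commute, since bending along $ac$ changes the length of $bd$, so $\{\varphi_{ac}^2,\varphi_{bd}^2\}\not\equiv 0$ and hence the zero locus of $d(\varphi_{ac}^2)\wedge d(\varphi_{bd}^2)$ is nowhere dense. At any point of the complementary set, $d\psi_t = (1-t)\,d(\varphi_{ac}^2) - t\,d(\varphi_{bd}^2)$ can vanish only if $(1-t,-t)=(0,0)$, which never happens for $t\in[0,1]$; so $\psi_t$ is non-constant on the reduced surface. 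Combined with the submersivity of $(\psi_{ij})_{\edg(i,j)\in\Prn\Gamma''}$ this shows that $d\Psi_t$ has rank $2n-4$ on a dense open subset of $\Gr(2,n)$, proving complete integrability for every $t$.

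For the endpoints, \eqref{eq:GC-bending} gives $\varphi_{ac} = \psi_{ac} - \tfrac12\sum_{k=a}^{c-1}\psi_{k,k+1}$ with all summands other than $\psi_{ac}$ lying in $(\psi_{ij})_{\edg(i,j)\in\Prn\Gamma''}$, so $\Psi_0 = \bigl((\psi_{ij})_{\edg(i,j)\in\Prn\Gamma''},\varphi_{ac}^2\bigr)$ factors as $\theta\circ\Psi_\Gamma$, where $\theta$ sends $\bigl((w_{ij}),s\bigr)$ to $\bigl((w_{ij}),(s-\tfrac12\sum_{k=a}^{c-1}w_{k,k+1})^2\bigr)$. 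Since $s-\tfrac12\sum_k w_{k,k+1} = \varphi_{ac} = \tfrac12(\lambda_1^{(a,c)}-\lambda_2^{(a,c)}) \ge 0$ on $\Delta_\Gamma = \Psi_\Gamma(\scO_\lambda)$, the map $\theta$ is injective there, so $\Psi_0$ and $\Psi_\Gamma$ have the same fibres; the same argument with $\varphi_{bd}$ identifies the fibres of $\Psi_1$ with those of $\Psi_{\Gamma'}$. This is the precise sense in which $\Psi_t$ connects $\Psi_\Gamma$ and $\Psi_{\Gamma'}$. The only place where care is needed is the passage to the two-dimensional reduction and the identification of the required independence with the non-commutativity of the two bending Hamiltonians; granting that, the rest is routine.
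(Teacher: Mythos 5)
Your proposal is correct, and its core --- the observation that $\varphi_{ac}$ and $\varphi_{bd}$ Poisson commute with every $\psi_{ij}$, $\edg(i,j) \in \Prn\Gamma''$, hence so does $\psi_t$ --- is exactly the paper's argument: the paper's entire ``proof'' is the bracket computation displayed immediately before the proposition, followed by ``Hence we obtain the following.'' What you add, and the paper leaves implicit, are the other two ingredients of complete integrability: generic functional independence, which you obtain by descending to the two-dimensional reduced space of spatial quadrilaterals and noting that $\{\varphi_{ac}^2,\varphi_{bd}^2\}\not\equiv 0$ (so the two differentials are generically independent there and $(1-t)\,d(\varphi_{ac}^2)-t\,d(\varphi_{bd}^2)$ cannot vanish for $t\in[0,1]$), and the identification of the fibres of $\Psi_0$, $\Psi_1$ with those of $\Psi_\Gamma$, $\Psi_{\Gamma'}$ via the injectivity of squaring on the range $\varphi_{ac}\ge 0$. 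Your side remark that the squares are what make $\psi_t$ smooth (since $\varphi_{ac}^2 = \frac14\bigl(2\tr(\mu_{G(a,c)}^2)-(\tr\mu_{G(a,c)})^2\bigr)$ is polynomial in matrix entries while $\lambda_1^{(a,c)}$ is not) is a genuine point that the paper does not spell out. In short: same approach where the paper gives one, with the omitted verifications correctly supplied.
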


\begin{theorem} \label{th:wall-crossing}
Let $\Gamma$ and $\Gamma'$ be two triangulations as above.
For $t \in (0,1)$, the wall-crossing formula 
\eqref{eq:wallcrossing_formula} for $\Psi_t$ is equivalent to
the coordinate change \eqref{eq:geom-lift} 
(and hence, to the Pl\"ucker relation \eqref{eq:pluecker_rel}).
\end{theorem}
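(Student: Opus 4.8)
The plan is to reduce the statement to the case of $\Gr(2,4)$ treated in Section 9 by degenerating $\Gr(2,n)$ to the central fiber $X_0$ of the family \eqref{eq:partial_deg} attached to the common subdivision $\Gamma''$. First I would use the gradient--Hamiltonian flow for $f_{\Gamma''}$, together with its extension across the singular locus due to Harada and Kaveh \cite{MR3425384}, to identify $(\Gr(2,n), \Psi_t)$ with $(X_0, \Psi_t^0)$ over the interiors of the moment polytopes; here $\Psi_t$ deforms to an integrable system $\Psi_t^0$ on $X_0$ exactly as $\Psi_\Gamma$ does, because $\psi_t = (1-t)(\varphi_{ac})^2 - t(\varphi_{bd})^2$ is a function of $\varphi_{ac}$ and $\varphi_{bd}$, both of which survive the degeneration. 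By \pref{cr:toric_deg2} this identification is compatible with the moduli spaces of Maslov index two disks and the evaluation maps on homology, so the functions $z_\beta$ of \eqref{eq:z_beta}, and hence the potential functions, agree on the two sides, the weights $T^{\int_\beta \omega}$ matching since the moment polytope $\Delta_\Gamma$ (i.e.\ the collection of action coordinates) is unchanged.

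Next I would localize the wall-crossing to the quadrilateral $P_0$. By \pref{cr:complement} the complement $X_0 \setminus D_0$ is isomorphic to $\Gr^{\circ}(2,4) \times \bT_{\Gamma'' \setminus \Gamma_0}^{\bC} \cong \Gr^{\circ}(2,4) \times (\bCx)^{2(n-4)}$, and via \eqref{eq:GIT_X_0} and \eqref{eq:torus-bundle} the function $\psi_t^0$ descends to a function on the $\Gr^{\circ}(2,4)$-factor only, while the remaining components of $\Psi_t^0$ become, under \eqref{eq:bending_coord}, the standard toric moment map on the $(\bCx)^{2(n-4)}$-factor. Thus $\Psi_t$ restricts to the product of a Lagrangian torus fibration $\Phi^{\Gr}$ on $\Gr^{\circ}(2,4)$ with the trivial torus fibration on $(\bCx)^{2(n-4)}$; the wall of $\Psi_t$ is the preimage of the wall of $\Phi^{\Gr}$, every non-constant Maslov index zero disk lies in a slice $\Gr^{\circ}(2,4) \times \{\mathrm{pt}\}$, and the class $\alpha$ of such a disk pairs nontrivially only with the boundary classes of the two Maslov index two disks associated with $y_{ac}$ and $y_{bd}$. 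Consequently the wall-crossing transformation \eqref{eq:wallcrossing_formula} fixes all $y_{ij}$ with $\edg(i,j) \in \Prn \Gamma''$ and acts only on the pair $(y_{ac}, y_{bd})$, with $y_{ab}, y_{bc}, y_{cd}, y_{ad}$ entering as parameters.

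It then remains to compute the wall-crossing for $\Phi^{\Gr}$ on $\Gr^{\circ}(2,4)$. I would check that $\Phi^{\Gr}$, which has the single wall $\{(1-t)(\varphi_{ac})^2 - t(\varphi_{bd})^2 = \text{const}\}$, satisfies Auroux's hypotheses \cite[Assumptions 3.2, 3.8]{MR2386535} — using that $\Gr(2,4)$ is Fano of index $4$, that its Maslov index two disks are regular, and that the unique Maslov index zero disk family is cut out transversally — and then identify a neighbourhood of the wall with the local model $Y = \{x_1 x_2 = 1 + x_3\}$ of \pref{eg:Auroux}, matching $\varphi_{ac}, \varphi_{bd}$ (hence $\psi_t$) with $|x_3|$ and $\mu_{S^1}$, exactly as in the direct treatment of $\Gr(2,4)$ in Section 9. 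Auroux's formula \eqref{eq:Auroux_wallcrossing} gives $z_{\beta_3} = z_{\beta_2}(1 + z_\alpha)$ with $z_\alpha = z_{\beta_1}/z_{\beta_2}$; rewriting this in the coordinates \eqref{eq:emb_U} via \pref{th:cluster_transf}(1) yields precisely the geometric lift \eqref{eq:geom-lift}, which by \pref{th:cluster_transf}(2) coincides with the Pl\"ucker relation \eqref{eq:pluecker_rel}.

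The main obstacle I expect is the disk-counting bookkeeping under the degeneration: one must show that no Maslov index zero disk is created or destroyed in passing from $\Gr(2,n)$ to $X_0$, and that all such disks, together with the Maslov index two disks entering the wall-crossing, are supported in the $\Gr^{\circ}(2,4)$-factor of $X_0 \setminus D_0$ — equivalently, that the $A_\infty$-theoretic wall-crossing is governed entirely by the one-dimensional polygon space $\Gr(P_0) \GIT \bT_{\Gamma_0}$. This requires upgrading \pref{cr:toric_deg2}, which is stated for Maslov index two, to control Maslov index zero configurations as well, using the small resolution of $X_0$ from \cite{MR2609019} to exclude bubbling into $D_0$, and verifying the regularity and transversality assumptions of \cite{MR2386535} throughout the family $\Psi_t$ for $t \in (0,1)$.
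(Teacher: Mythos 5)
Your proposal follows essentially the same route as the paper: degenerate to the central fiber $X_0$ of the family attached to the common subdivision $\Gamma''$, use Corollary \ref{cr:complement} to localize the wall-crossing to the $\Gr^{\circ}(2,4)$ factor (Auroux's local model $Y$), and identify $z_{\tilde\beta_3} = z_{\tilde\beta_2}(1+z_{\tilde\alpha})$ with the coordinate change \eqref{eq:geom-lift}. The one caveat is that the paper never claims $\Psi_t^0$ is literally a product fibration; it deforms each individual fiber into a product torus $T_{r,R}\times T'$ by a Hamiltonian isotopy supported near the relevant level set and controls Maslov indices through that isotopy via the projection to the $x_3$-plane, which is the same localization you describe carried out fiberwise.
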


We prove this theorem in the next two sections.

\section{Wall-crossing formula on $\Gr(2,4)$}

In this section we prove Theorem \ref{th:wall-crossing}
in the case of $\Gr(2,4)$.

Let $\Gamma, \Gamma'$ be the two triangulations
of a quadrilateral $P$ given by diagonals $d_{13}$, $d_{24}$, 
respectively.
Then the corresponding potential functions 
\begin{align}
  W_{\Gamma} &= \frac{y_{13}}{y_{12}} + \frac{y_{13}}{y_{23}}
    + \frac{y_{12}y_{23}}{y_{13}} + \frac{qy_{13}}{y_{12}y_{23}y_{34}}
    + \frac{y_{13}y_{34}}{q} + \frac{q}{y_{13}}, \\
  W_{\Gamma'} &= \frac{y_{24}}{y_{23}} + \frac{y_{24}}{y_{23}}
    + \frac{y_{23}y_{34}}{y_{24}} + \frac{qy_{24}}{y_{12}y_{23}y_{34}}
    + \frac{y_{12}y_{24}}{q} + \frac{q}{y_{24}}
\end{align}
are related by the coordinate change
\eqref{eq:geom-lift}, which is given by 
\begin{equation}
  \frac{1}{y_{13}y_{24}} = \frac{1}{q y_{23}}
  \left( 1 + \frac{q}{y_{12}y_{34}} \right)
\label{eq:cluster_transf}
\end{equation}
in this case.

Recall that the image of the moment map
$\mu_{\bT_{U(4)}} \colon \Gr(2,4) \cong \scO_{\lambda} 
  \to \bR^4$
is an octahedron in 
$\{ \bsr = (r_1, \dots, r_4)  \in \bR^4 \mid 
\sum_{i=1}^4 r_i = \lambda \} \cong \bR^3$
defined by
\begin{equation}
  0 \le r_i \le  
  \sum_{j \ne i} r_j , \qquad 
  i = 1, \dots, 4,
\end{equation}
shown in \pref{fg:muTU4},
and critical values of $\mu_{\bT_{U(4)}}$ inside 
the octahedron $\mu_{\bT_{U(4)}}(\scO_{\lambda})$ form three walls
$H_1 \cup H_2 \cup H_3$, where
\begin{align}
  H_1 &= \{\bsr \mid r_1 + r_2 = r_3 + r_4 \} , \\
  H_2 &= \{\bsr \mid r_1 + r_3 = r_2 + r_4 \} , \\
  H_3 &= \{\bsr \mid r_1 + r_4 = r_2 + r_3 \}
\end{align}
(see \cite[Proposition 4.3]{Hausmann-Knutson_PSG}
or \eqref{eq:discriminant_G} below).
\begin{figure}[ht]
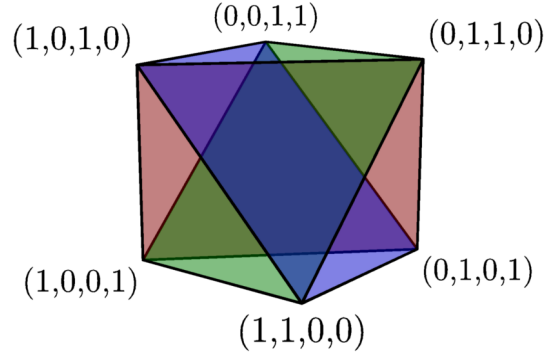

\centering
\begin{asy}
import three;
import polyhedron_js;

size(7cm,0);

polyhedron[] poly1;
poly1[0]=(0,0,1)--(1,0,0)--(1,0,1)--cycle;
poly1[1]=(0,0,1)--(1,0,1)--(0,1,1)--cycle;
poly1[2]=(0,0,1)--(0,1,1)--(0,1,0)--cycle;
poly1[3]=(0,0,1)--(0,1,0)--(1,0,0)--cycle;
poly1[4]=(1,1,0)--(1,0,0)--(1,0,1)--cycle;
poly1[5]=(1,1,0)--(1,0,1)--(0,1,1)--cycle;
poly1[6]=(1,1,0)--(0,1,1)--(0,1,0)--cycle;
poly1[7]=(1,1,0)--(0,1,0)--(1,0,0)--cycle;

path3 h1=(1,0,0)--(1,0,1)--(0,1,1)--(0,1,0)--cycle;
path3 h2=(1,0,0)--(1,1,0)--(0,1,1)--(0,0,1)--cycle;
path3 h3=(0,1,0)--(1,1,0)--(1,0,1)--(0,0,1)--cycle;

filldraw(poly1,new pen[]{gray},op=0);
draw(surface(h1),red+opacity(0.5));
draw(surface(h2),green+opacity(0.5));
draw(surface(h3),blue+opacity(0.5));

label("(0,0,1,1)", (0,0,1),N);
label("(1,0,0,1)", (1,0,0),SW);
label("(1,0,1,0)", (1,0,1),NW);
label("(0,1,1,0)", (0,1,1),NE);
label("(0,1,0,1)", (0,1,0),SE);
label("(1,1,0,0)", (1,1,0),S);

\end{asy}
\caption{The moment polytope of $\mu_{\bT_{U(4)}}$
and the hyperplanes $H_1$ (red),
$H_2$ (green), and $H_3$ (blue)
(rotatable with Acrobat Reader)}
\label{fg:muTU4}
\end{figure}
For each interior point 
$\bsr$ in $\mu_{\bT_{U(4)}}(\scO_{\lambda})$,
the completely integrable system
\begin{equation}
  \Psi_t = ( \psi_{12}, \psi_{23}, \psi_{34}, 
  (1-t)(\varphi_{13})^2 - t (\varphi_{24})^2)
\end{equation}
induces the function
\begin{equation}
  \varphi_t = (1-t)(\varphi_{13})^2 - t (\varphi_{24})^2
  \colon \scM_{\bsr} = \mu_{\bT_{U(4)}}^{-1}(\bsr) / \bT_{U(4)}
  \longrightarrow \bR
\end{equation}
on the polygon space.
Let $B_t = \Psi_t(\Gr(2,4))$ and 
$I_{t, \bsr} = \varphi_t(\scM_{\bsr})$ denote the ranges 
of $\Psi_t$ and $\varphi_t$, respectively.
Since $(\psi_{12}, \psi_{23}, \psi_{34})$ gives the moment map
of the $\bT_{U(4)}$-action on $\scO_{\lambda}$,
we have a natural projection
\begin{equation}
 B_t \longrightarrow \mu_{\bT_{U(4)}}(\scO_{\lambda})
 \subset \bR^3,
 \quad
 (u_1,u_2, u_3, u_4) \longmapsto (u_1, u_2, u_3).
\end{equation}

\begin{lemma} \label{lm:deform_bending}
For each $t \in (0, 1)$ 
and $\bsr \in \Int \mu_{\bT_{U(n)}}(\scO_{\lambda})$,
the map 
$\varphi_t \colon \scM_{\bsr} \to I_{t, \bsr}$ 
is an $S^1$-bundle over the interior $\Int I_{t, \bsr}$, 
and the fibers over boundary points of $I_{t, \bsr}$ are single points.
\end{lemma}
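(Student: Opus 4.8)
The plan is to realize $\scM_{\bsr}$ as a symplectic two-sphere via the bending flow of $\varphi_{13}$, to write $\varphi_t$ in action--angle coordinates for that flow, and to prove that $\varphi_t$ is a perfect Morse function, i.e.\ has exactly two critical points, a nondegenerate maximum and a nondegenerate minimum. Granting this, standard Morse theory on $S^2$ gives the lemma: every noncritical level set of $\varphi_t$ is a single circle, $\varphi_t$ restricts to a locally trivial $S^1$-bundle over the interior of its image $I_{t,\bsr}$, and the fibers over the two endpoints of $I_{t,\bsr}$ are exactly the two critical points. I would first assume $\bsr$ lies off the walls $H_1\cup H_2\cup H_3$, so that $\scM_{\bsr}$ is a smooth $S^2$, and handle the wall case at the end.

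\textbf{Coordinates.} Cutting the spatial quadrilateral along the diagonal $d_{13}$ into two triangles with side lengths $(r_1,r_2,p)$ and $(r_3,r_4,p)$, where $p=\varphi_{13}\in[\ell_-,\ell_+]$ with $\ell_-=\max(|r_1-r_2|,|r_3-r_4|)$ and $\ell_+=\min(r_1+r_2,r_3+r_4)$, the Kapovich--Millson bending flow along $d_{13}$ makes $\scM_{\bsr}\cong S^2$ toric for the associated Hamiltonian $S^1$-action, with action--angle coordinates $(p,\theta)\in(\ell_-,\ell_+)\times S^1$ on the complement of the two poles. The law of cosines for the two triangles glued with dihedral angle $\theta$ gives $\varphi_{24}^2=C(p)-D(p)\cos\theta$, where $C$ is smooth, $D(p)>0$ on the open interval, and $D$ vanishes like $\sqrt{\ell_\pm-p}$ at the endpoints (there exactly one of the two triangles degenerates, because $\bsr$ is off the walls); hence $\varphi_t=(1-t)p^2-tC(p)+tD(p)\cos\theta$ on the open dense part. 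Off the walls one also checks directly that $\varphi_{24}^2$, and therefore $\varphi_t$, has nonzero differential at the two poles of the $\varphi_{13}$-flow, so no critical point of $\varphi_t$ sits there.

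\textbf{Critical points.} Since $\partial_\theta\varphi_t=-tD(p)\sin\theta$ and $D(p)>0$ in the interior, every critical point of $\varphi_t$ lies on $\theta=0$ or $\theta=\pi$. Putting $s=p^2$ and $A_\mp(s)=C(p)\mp D(p)$ --- so that $A_-,A_+$ are the two values $q_{\min}^2,q_{\max}^2$ that $\varphi_{24}^2$ can take over planar quadrilaterals with sides $r_1,\dots,r_4$ and diagonal $\varphi_{13}=\sqrt s$ --- the equation $\partial_p\varphi_t=0$ along $\theta=0$ (resp.\ $\theta=\pi$) becomes $\tfrac{d}{ds}A_-(s)=\tfrac{1-t}{t}$ (resp.\ $\tfrac{d}{ds}A_+(s)=\tfrac{1-t}{t}$). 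The key claim is that on $(\ell_-^2,\ell_+^2)$ the function $A_-(s)$ is strictly convex and $A_+(s)$ is strictly concave. Together with the boundary behaviour $\tfrac{d}{ds}A_-\to-\infty$ as $s\to\ell_-^2$, $\tfrac{d}{ds}A_-\to+\infty$ as $s\to\ell_+^2$ and the opposite signs for $\tfrac{d}{ds}A_+$, the claim forces each of the two equations to have exactly one solution as $\tfrac{1-t}{t}$ ranges over $(0,\infty)$, producing exactly one critical point on each meridian; the signs $-tA_-''<0$ along $\theta=0$ (with a $\theta$-maximum there) and $-tA_+''>0$ along $\theta=\pi$ (with a $\theta$-minimum there) show that the Hessians are negative, resp.\ positive, definite, so these are a nondegenerate global maximum and a nondegenerate global minimum of $\varphi_t$, and there are no others.

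\textbf{Main obstacle and endgame.} The hard part is the key claim --- the convexity of $q_{\min}^2$ and the concavity of $q_{\max}^2$ as functions of the squared diagonal length. With $P_1(s)=(s-(r_1-r_2)^2)((r_1+r_2)^2-s)$, $P_2(s)=(s-(r_3-r_4)^2)((r_3+r_4)^2-s)$ and $K=r_1^2-r_2^2+r_3^2-r_4^2$ one has
\begin{equation*}
  A_\mp(s)=\frac{\bigl(\sqrt{P_1(s)}\mp\sqrt{P_2(s)}\bigr)^2+K^2}{4s},
\end{equation*}
and the difficulty is that the ``common part'' $\tfrac{P_1+P_2+K^2}{4s}$ changes convexity type with the $r_i$, so it cannot simply be split off from the cross term $\mp\tfrac{\sqrt{P_1P_2}}{2s}$; one must push through the full second-derivative computation, or instead use the Cayley--Menger relation $2sA^2+b(s)A+c(s)=0$ between the two squared diagonals and analyse the curvature of its two branches, or use the parametrization $A=r_1^2+r_4^2-2r_1r_4\cos\gamma$ of the flexing planar quadrilateral by the angle $\gamma$ between the sides of lengths $r_1$ and $r_4$. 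Finally, for $\bsr$ on a wall $H_i$ the same analysis applies, except that one of the $\varphi_{13}$-poles is now a degenerate point and becomes the critical point of $\varphi_t$ playing the role vacated by the corresponding meridian critical point, so again there are exactly two; alternatively this case follows from the generic one by letting $\bsr$ tend to the wall.
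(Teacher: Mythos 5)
Your reduction is sound and is in substance the same reduction the paper makes: writing $\varphi_{24}^2=C(p)-D(p)\cos\theta$ in action--angle coordinates for the bending flow of $\varphi_{13}$ and locating all critical points of $\varphi_t$ on the meridians $\theta=0,\pi$ is equivalent to studying the image of $\varphi=((\varphi_{13})^2,(\varphi_{24})^2)\colon\scM_{\bsr}\to\bR^2$, whose lower and upper boundary graphs are exactly your $A_-(s)=q_{\min}^2$ and $A_+(s)=q_{\max}^2$; your ``key claim'' (convexity of $A_-$, concavity of $A_+$) is precisely the statement that $\varphi(\scM_{\bsr})$ is a convex region, which is the paper's Claim. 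The problem is that you do not prove it. You name it as the main obstacle, display the formula for $A_\mp$, and list three candidate strategies (brute-force second derivatives, the Cayley--Menger relation, the planar-flex parametrization) without carrying any of them out. Since everything else in your argument is routine Morse theory once this claim is granted, the proof as written has a genuine gap at its only nontrivial point. For comparison, the paper closes it by eliminating the square roots to show that $\partial(\varphi(\scM_{\bsr}))$ lies on an explicit cubic curve $F(v_1,v_2)=0$, computing the discriminant of $F$ in $\bsr$ (whose zero locus is exactly $\partial(\mu_{\bT_{U(4)}}(\scO_\lambda))\cup H_1\cup H_2\cup H_3$), and invoking the fact that the compact component of a smooth real plane cubic is a convex oval; the wall cases are then settled by explicitly factoring $F$ into a hyperbola and a line, or into coordinate axes and a line.

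A secondary weakness is your treatment of $\bsr$ on the walls, which the lemma does not exclude (the $H_i$ meet the interior of the moment octahedron). There $\scM_{\bsr}$ acquires nodal singular points, the fiber $\varphi_{13}^{-1}(0)$ can be a line segment rather than a point, and $s=p^2$ fails to be a coordinate near $p=0$, so ``the same analysis applies'' and ``let $\bsr$ tend to the wall'' do not suffice: a limit of circles could a priori degenerate to a segment, which is exactly what must be ruled out. The paper's branched-double-cover formulation ($\scM_{\bsr}\to\varphi(\scM_{\bsr})$ is a double cover branched over the boundary, so the preimage of a chord is a circle and of a boundary point is a point) handles all these degenerations uniformly once convexity of the image is known in each case; if you keep the Morse-theoretic packaging you need a separate argument at the singular points and at $p=0$. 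You should also note, as the paper implicitly does, that on the walls the region is convex but not strictly convex, and one must check that the lines $(1-t)v_1-tv_2=c$ with $t\in(0,1)$ are never parallel to the straight edges of $\varphi(\scM_{\bsr})$ (which have slopes $-1$, $0$, or $\infty$), so that the endpoint fibers are still single points.
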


\begin{proof}
We first note that the $S^1$-fibration 
$\varphi_{13} \colon \scM_{\bsr} \to I_{0, \bsr}$
(resp. $\varphi_{24}$) is not homeomorphic to 
the toric moment map on $\bP^1$ 
exactly when $\min \varphi_{13} = 0$
(resp. $\min \varphi_{24} = 0$),
or equivalently, $r_1=r_2$ and $r_3=r_4$
(resp. $r_1=r_4$ and $r_2=r_3$);
in this case, the fiber $\varphi_{13}^{-1}(0)$ consists of 
``broken lines'' 
$\bsxi = ( \xi_1, \dots, \xi_4)$ satisfying 
$\xi_1 + \xi_2 = \xi_3 +\xi_4 =0$,
and it is diffeomorphic to a line segment.
We consider the map 
\begin{equation}
   \varphi = ( (\varphi_{13})^2, (\varphi_{24})^2) \colon
  \scM_{\bsr} \longrightarrow \bR^2,
\end{equation}
and let $(v_1, v_2)$ be the standard coordinate on $\bR^2$.
Then the boundary of the image $\varphi (\scM_{\bsr})$
contains a line segment in a coordinate axis
if $\min \varphi_{13} = 0$ or $\min \varphi_{24} = 0$
(see Figure \ref{fg:bending_image3} and 
Figure \ref{fg:bending_image5}).
\begin{figure}[h]
  \centering
  \begin{minipage}{6cm}
    \centering
    \includegraphics[bb=0 0 79 75]{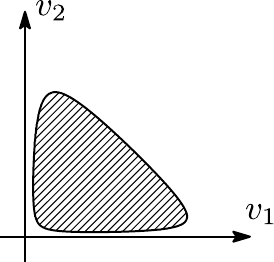}
    \caption{$\varphi (\scM_{\bsr})$ for generic $\bsr$}
    \label{fg:bending_image1}
  \end{minipage}
  \hspace{1cm}
  \begin{minipage}{5cm}
    \centering
    \includegraphics[bb=0 0 79 75]{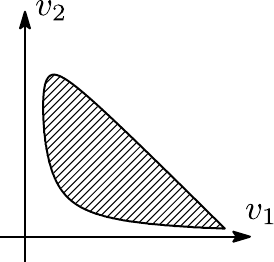}
    \caption{$\varphi (\scM_{\bsr})$ for $\bsr \in H_1$}
    \label{fg:bending_image2}
  \end{minipage}
  \\[5mm]
  \begin{minipage}{5cm}
    \centering
    \includegraphics[bb=0 0 78 75]{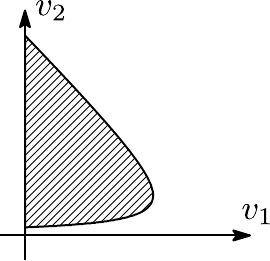}
    \caption{$\varphi (\scM_{\bsr})$  in the case of $\bsr \in H_2 \cap H_3$}
    \label{fg:bending_image3}
  \end{minipage}
  \hspace{1cm}
  \begin{minipage}{5cm}
    \centering
    \includegraphics[bb=0 0 79 75]{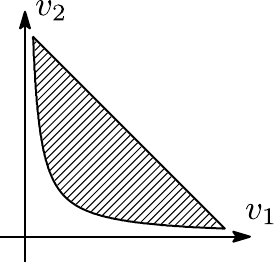}
    \caption{$\varphi (\scM_{\bsr})$  in the case of $\bsr \in H_1 \cap H_3$}
    \label{fg:bending_image4}
  \end{minipage}
  \\[5mm]
    \includegraphics[bb=0 0 78 75]{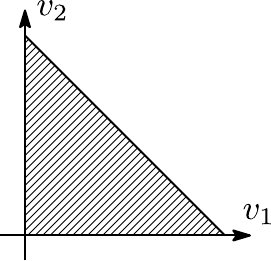}
    \caption{$\varphi (\scM_{\bsr})$ in the case of 
                $\bsr \in H_1 \cap H_2 \cap H_3$}
    \label{fg:bending_image5}
\end{figure}

\begin{claim} \label{cl:convex}
For any $\bsr$ and $t \in (0,1)$, the intersection 
\begin{equation}
 \varphi(\scM_{\bsr}) \cap
 \{ (v_1, v_2) \in \bR^2 \mid (1-t) v_1 - t v_2 = c \} 
\end{equation}
is a line segment for $c \in \Int I_{t, \bsr}$, and
is a single point in $\partial \varphi(\scM_{\bsr})$
when $c \in \partial I_{t, \bsr}$.
\end{claim}

Assuming Claim \ref{cl:convex},
Lemma \ref{lm:deform_bending} follows from the fact that
$\varphi \colon \scM_{\bsr} \to \varphi(\scM_{\bsr})$
is a double cover which branches along the boundary 
$\partial \varphi(\scM_{\bsr})$.
\end{proof} 

\begin{proof}[Proof of Claim \ref{cl:convex}.]
Fix $v_1 > 0$ and consider a quadrilateral $\bsxi \in \scM_{\bsr}$
with $\varphi_{13}(\bsxi) = \sqrt{v_1}$.
\begin{figure}[h]
  \centering
    \includegraphics[bb=0 0 92 79]{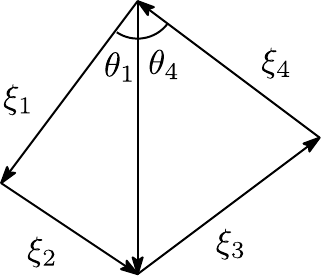}
  \caption{A diagonal $\xi_1 + \xi_2$ in a quadrilateral
              $\bsxi \in \scM_{\br}$.}
  \label{fg:quadrilateral1}
\end{figure}
Then the angles $\theta_1$ (resp. $\theta_4$)  
between the side $\xi_1$ (resp.  $-\xi_4$) 
and the diagonal $\xi_1+\xi_2$
connecting the first and the third vertices
are given by
\begin{equation}
  \cos \theta_1 = \frac{v_1 + r_1^2 - r_2^2}{2r_1\sqrt{v_1}},
  \quad
  \cos \theta_4 = \frac{v_1 + r_4^2 - r_3^2}{2r_4 \sqrt{v_1}}
  \label{eq:cos}
\end{equation}
(see \pref{fg:quadrilateral1}).
Since $\varphi_{24}$ restricted to the level set $\varphi_{13}^{-1}(\sqrt{v_1})$
takes its maximum and minimum when 
$\bsxi$ is contained in a plane,
the range of $(\varphi_{24})^2 |_{\varphi_{13}^{-1}(\sqrt{v_1})}$ is 
\begin{multline} \label{eq:sin_cos}
  r_1^2+r_4^2-2r_1r_4
  (\cos \theta_1 \cos \theta_4 + \sin \theta_1 \sin \theta_4)
  \\
  \le v_2 \le 
  r_1^2+r_4^2-2r_1r_4
  (\cos \theta_1 \cos \theta_4 - \sin \theta_1 \sin \theta_4).
\end{multline}
Equality in \pref{eq:sin_cos} holds
if and only if
\begin{align}
 \cos \theta_1 \cos \theta_4 \pm \sin \theta_1 \sin \theta_4 
 = \frac{r_1^2+r_4^2-v_2}{2 r_1 r_4},
\end{align}
which, combined with \eqref{eq:cos}, gives
\begin{align} \label{eq:sin}
 \pm \sin \theta_1 \sin \theta_4 = \frac{r_1^2+r_4^2-v_2}{2 r_1 r_4}
  - \frac{v_1 + r_1^2 - r_2^2}{2 r_1 \sqrt{v_1}}
  \cdot \frac{v_1 + r_4^2 - r_3^2}{2 r_4 \sqrt{v_1}}.
\end{align}
By taking the square of the both sides of \pref{eq:sin}
and using
\begin{align}
 \sin^2 \theta_1 \sin^2 \theta_4
  &= \lb 1 - \cos^2 \theta_1 \rb \lb 1 - \cos^2 \theta_4 \rb,
\end{align}
we see that
\begin{multline}
  F(v_1, v_2) \coloneqq
  v_1^2v_2 + v_1v_2^2 - (r_1^2+r_2^2+r_3^2+r_4^2)v_1v_2 \\
  + (r_1^2-r_4^2)(r_2^2-r_3^2)v_1
  + (r_1^2-r_2^2)(r_4^2-r_3^2)v_2 \\
  + (r_1^2-r_2^2+r_3^2-r_4^2)(r_1^2r_3^2-r_2^2r_4^2)
\end{multline}
gives the defining equation 
for the boundary of the image 
$\varphi(\scM_{\bsr}) \subset \bR^2$.
The discriminant of $F(v_1,v_2)$ is given by
\begin{multline}
 r_1^2 r_2^2 r_3^2 r_4^2
 (r_1+r_2+r_3+r_4)^2
 (-r_1+r_2+r_3+r_4)^2 \\
 (r_1-r_2+r_3+r_4)^2
 (r_1+r_2-r_3+r_4)^2 \\
 (r_1+r_2+r_3-r_4)^2
 (r_1+r_2-r_3-r_4)^2 \\
 (r_1-r_2+r_3-r_4)^2
 (r_1-r_2-r_3+r_4)^2,
\label{eq:discriminant_G}
\end{multline}
which means that the discriminant locus is 
$\partial (\mu_{\bT_{U(4)}}(\scO_{\lambda}))
\cup H_1 \cup H_2 \cup H_3$.

When $\bsr$ does not lie
on the discriminant locus,
then $\partial (\varphi(\scM_{\br}))$
is the positive real part
of a smooth plane cubic curve.
A smooth real plane cubic curve has either
\begin{itemize}
\item
one non-compact connected component, or
\item
one non-compact connected component
and one compact connected component.
\end{itemize}
Only the latter can happen in our case,
and the boundary $\partial (\varphi(\cM_{\bsr}))$
of $\varphi(\cM_{\bsr})$ is the compact connected component
of the cubic curve.
When $\bsr$ lies on exactly one wall,
say $H_1$,
then the resulting cubic curve has one node,
and $\varphi(\cM_{\bsr})$ is the closure
of the compact connected component
of the complement of the nodal cubic curve
(see Figure \ref{fg:bending_image2}).
In either case, $\varphi(\scM_{\br})$ is strictly convex.

Next we consider the case where $\bsr$ lies on exactly two walls.
If $\br$ lies in  $H_2$ and $H_3$,
one has $r_1 = r_2 \ne r_3 = r_4$,
so that
\begin{align}
 F(v_1, v_2)
  &= v_1 \lb (r_1^2 - r_3^2)^2 + v_1 v_2 - 2 (r_1^2 + r_3^2) v_2 + v_2^2 \rb
\end{align}
and $\varphi(\cM_{\bsr})$ is bounded
by a hyperbola and the $v_2$-axis.
Similarly, if $\br$ lies in $H_1 \cap H_2$, 
the image $\varphi (\scM_{\bsr})$ is bounded by a hyperbola and 
the $v_1$-axis.
When $\br$ lies in $H_1 \cap H_3$, 
one has $r_1 = r_3 \ne r_2 = r_4$, and thus 
\begin{equation}
F(v_1, v_2) = (v_1 + v_2 - 2(r_1^2 + r_2^2))(v_1 v_2 - (r_1^2 - r_2^2)^2),
\end{equation}
which means that $\varphi(\scM_{\br})$ is bounded by a hyperbola
and a line as in Figure \ref{fg:bending_image4}.

When $\br$ lies on all the three walls $H_1$, $H_2$ and $H_3$,
then one has
$r_1 = r_2 = r_3 = r_4$,
so that
\begin{align}
 F(v_1, v_2)
  &= v_1 v_2 ( v_1 + v_2 - 4 r_1^2 ),
\end{align}
and $\varphi(\cM_{\bsr})$ is a triangle.

In all these cases,
the line $\{ (v_1, v_2) \mid (1-t) v_1 - t v_2 = c \}$ intersects
$\varphi(\cM_{\bsr})$ in a line segment or a single boundary point.
\end{proof}

For a fixed interior point $\bsr$ in $\mu_{\bT_{U(4)}} (\scO_{\lambda})$,
we identify the symplectic reduction 
$\scM_{\bsr}= \mu_{\bT_{U(4)}}^{-1} (2 \bsr) / \bT_{U(4)}$ with 
the GIT quotient $\Gr(2,4) \GIT \bT_{U(4)}^{\bC}$.
Recall that $\Gr(2,4) \GIT \bT_{U(4)}^{\bC}$ 
is embedded into $\bP^2$ by
\begin{equation}
  \Gr(2,4) \GIT \bT_{U(4)}^{\bC} \hookrightarrow \bP^2,
  \quad
  [p_{ij}] \mapsto 
  [p_{12}p_{34} : p_{13}p_{24}: p_{14}p_{23}],
  \label{eq:embedding_Mr}
\end{equation}
and its image is given by
\begin{equation}
  \{ [\zeta_1:\zeta_2: \zeta_3] \in \bP^2 \mid
  \zeta_2 = \zeta_1 + \zeta_3 \}
  \cong \bP^1.
\end{equation}
Let $D'$ be a divisor in $\Gr(2,4)$ defined by
\begin{equation}
  D' = \{ p_{12} = 0 \} \cup \{p_{23}=0 \} \cup \{ p_{34} = 0 \} ,
\end{equation}
which is contained in an anti-canonical divisor 
\begin{equation}
  D = \{ p_{12} = 0 \} \cup \{p_{23}=0 \} \cup 
    \{ p_{34} = 0 \} \cup \{ p_{14} = 0 \}.
  \label{eq:anti-canonical_Gr(2,4)}
\end{equation}
Recall that the defining equation for $Y \subset \bC^3$ 
given in \eqref{eq:Y} is
\begin{equation}
  x_1 x_2 = 1 + x_3.
\end{equation}
Then $\Gr(2,4) \setminus D'$ is 
identified with $Y \times (\bCx)^2$ by
\begin{equation}
  [p_{12} : p_{13} : p_{14} : p_{23} : p_{24} : p_{34}]
  = [1: x_1: s_1^{-1} s_2 x_3 : s_1: s_2x_2: s_2],
  \label{eq:complement_Gr(2,4)}
\end{equation}
or equivalently,
\begin{equation}
  (x_1, x_2, x_3, s_1, s_2) = \left(
  \frac{p_{13}}{p_{12}}, \frac{p_{24}}{p_{34}}, 
  \frac{p_{14}p_{23}}{p_{12}p_{34}},
  \frac{p_{23}}{p_{12}}, \frac{p_{34}}{p_{12}}
  \right).
\end{equation}
We equip $Y \times (\bCx)^2$ with the symplectic structure
induced from that on $\Gr(2,4)$.
The $\bT_{U(4)}$-action 
\begin{equation}
  \tau \cdot [p_{ij}]_{1 \le i < j \le 4} = [ \tau_i \tau_j p_{ij}],
  \quad
  \tau = \diag (\tau_1, \dots , \tau_4) \in \bT_{U(4)}
\end{equation}
on $\Gr(2,4)$ induces a $\bT_{U(4)}$-action on 
$Y \times (\bCx)^2$ given by
\begin{equation}
 \tau \cdot (x_1, x_2, x_3, s_1, s_2) = 
 \left( \frac{\tau_3}{\tau_2} x_1, \frac{\tau_2}{\tau_3} x_2, x_3, 
         \frac{\tau_3}{\tau_1}s_1, \frac{\tau_3 \tau_4}{\tau_1 \tau_2}s_2 \right).
  \label{eq:torus_action0}
\end{equation}
Then the projection
$Y \times (\bCx)^2 \to \bC_{x_3}$ to the $x_3$-plane
is identified with the restriction to $\Gr(2,4) \setminus D'$
of the GIT quotient of $\Gr(2,4)$
by the $\bT_{U(4)}$-action:
\begin{equation}
  \begin{matrix}
    Y \times (\bCx)^2 & \hookrightarrow
    & \Gr(2,4)^{\mathrm{ss}} \\
    \downarrow & & \downarrow \\
    \bC & \hookrightarrow
    & \Gr(2,4) \GIT \bT_{U(4)}^{\bC}
  \end{matrix},
\end{equation}
where the inclusion 
$\bC \hookrightarrow \Gr(2,4) \GIT \bT_{U(4)}^{\bC}$ 
is given by $x_3 = \zeta_3 / \zeta_1$.

\begin{lemma} \label{lm:max_13}
For each $\bsr$, the bending Hamiltonians
$
 \varphi_{13} \colon \scM_{\bsr} \to \bR
$
and
$
 \varphi_{24} \colon \scM_{\bsr} \to \bR
$
take their maximums
at the points
$\zeta_1 = 0$ and $\zeta_3 = 0$
respectively
under the inclusion 
\eqref{eq:embedding_Mr}.
\end{lemma}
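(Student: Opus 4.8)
The plan is to rewrite the two bending Hamiltonians on $\scM_{\bsr}$ as explicit functions of the Pl\"ucker coordinates, so that their maxima can simply be read off. Fix a representative $Z = (z_k, w_k)_{k=1}^{4}$ in $\mu_{U(2)}^{-1}(\lambda \bsone_2) \cap \mu_{\bT_{U(4)}}^{-1}(2\bsr)$ of a point of $\scM_{\bsr}$, and set $\xi_k = \nu(z_k, w_k) \in \bR^3$ with $\nu$ the Hopf map \eqref{eq:Hopf}; then $|\xi_k| = (|z_k|^2 + |w_k|^2)/4 = r_k$, one has $\sum_k \xi_k = 0$ by \eqref{eq:ONB}, and, under the Gelfand--MacPherson isomorphism, the bending Hamiltonians $\varphi_{13}$ and $\varphi_{24}$ on $\scM_{\bsr}$ become $|\xi_1 + \xi_2|$ and $|\xi_2 + \xi_3|$, respectively. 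The first step is the elementary identity
\begin{equation}
 \langle \xi_i, \xi_j \rangle = r_i r_j - \frac{1}{8} |p_{ij}|^2 ,
\end{equation}
which follows by expanding $\langle \nu(z_i,w_i), \nu(z_j,w_j) \rangle$ and substituting $|p_{ij}|^2 = |z_i|^2 |w_j|^2 + |z_j|^2 |w_i|^2 - 2 \mathrm{Re}(z_i \overline{w}_i \overline{z}_j w_j)$ together with $(|z_i|^2 + |w_i|^2)(|z_j|^2 + |w_j|^2) = 16 r_i r_j$; conceptually it records that $\nu$ is the Hopf fibration combined with the identity $\cos 2\theta = 1 - 2 \sin^2 \theta$. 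Since $|p_{ij}|^2$ is invariant under the residual $\bT_{U(4)}$- and $U(2)$-actions, it descends to a function on $\scM_{\bsr}$, and so does this identity.

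Using $\xi_1 + \xi_2 = -(\xi_3 + \xi_4)$ one obtains, as functions on $\scM_{\bsr}$,
\begin{equation}
 \varphi_{13}^2 = (r_1 + r_2)^2 - \frac{1}{4} |p_{12}|^2 = (r_3 + r_4)^2 - \frac{1}{4} |p_{34}|^2 ,
\end{equation}
and likewise $\varphi_{24}^2 = (r_2 + r_3)^2 - \frac{1}{4} |p_{23}|^2 = (r_1 + r_4)^2 - \frac{1}{4} |p_{14}|^2$. Since $|p_{ij}|^2 \ge 0$, the first equality gives $\varphi_{13} \le \min\{r_1 + r_2, r_3 + r_4\}$, with equality at a point of $\scM_{\bsr}$ exactly when $p_{12} = 0$ or $p_{34} = 0$ there, that is, exactly on the locus $\zeta_1 = p_{12} p_{34} = 0$. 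Under the embedding \eqref{eq:embedding_Mr} the image of $\scM_{\bsr}$ is the line $\{\zeta_2 = \zeta_1 + \zeta_3\} \cong \bP^1$, on which $\{\zeta_1 = 0\}$ is the single point $[0 : 1 : 1]$; since this point belongs to $\scM_{\bsr}$, the upper bound is attained and $\varphi_{13}$ takes its maximum exactly at $\zeta_1 = 0$. Running the identical argument with the index pairs $(2,3)$ and $(1,4)$ in place of $(1,2)$ and $(3,4)$ shows that $\varphi_{24}$ takes its maximum exactly on $\zeta_3 = p_{14} p_{23} = 0$, that is, at $[1 : 1 : 0]$.

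The only non-formal ingredients are the identity $\langle \xi_i, \xi_j \rangle = r_i r_j - \frac{1}{8}|p_{ij}|^2$ and the remark that $|p_{ij}|^2$ descends to $\scM_{\bsr}$; granting these, the maxima are read off. The one point to keep track of is that for $\bsr$ in the interior of $\mu_{\bT_{U(4)}}(\scO_{\lambda})$ the GIT quotient $\scM_{\bsr} \cong \Gr(2,4) \GIT \bT_{U(4)}^{\bC}$ is the whole $\bP^1$, so that the points $\{\zeta_1 = 0\}$ and $\{\zeta_3 = 0\}$ genuinely lie in it; this is what turns the inequalities $\varphi_{13}^2 \le \min\{(r_1+r_2)^2, (r_3+r_4)^2\}$ and $\varphi_{24}^2 \le \min\{(r_2+r_3)^2, (r_1+r_4)^2\}$ into statements locating where the maxima are attained.
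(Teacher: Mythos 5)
Your proof is correct, but it takes a genuinely different route from the paper's. The paper argues geometrically: it takes for granted that $\max \varphi_{13} = \min\{r_1+r_2, r_3+r_4\}$ (from the known range of the bending Hamiltonian), picks a maximizer, observes that equality in $|\xi_1 + \xi_2| \le |\xi_1| + |\xi_2|$ forces $\xi_1 \parallel \xi_2$, hence $(z_1,w_1) \propto (z_2,w_2)$ and $p_{12} = 0$ (or symmetrically $p_{34}=0$), so the maximizer lands on $\zeta_1 = 0$. You instead establish the exact identity $\langle \xi_i, \xi_j \rangle = r_i r_j - \tfrac18 |p_{ij}|^2$ coming from the Hopf map, deduce $\varphi_{13}^2 = (r_1+r_2)^2 - \tfrac14|p_{12}|^2 = (r_3+r_4)^2 - \tfrac14|p_{34}|^2$, and read off both the value of the maximum and its exact locus. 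I checked the identity (it is right, including the normalizations $|\xi_k| = r_k$ on $\mu_{\bT_{U(4)}}^{-1}(2\bsr)$ and the $U(2)$- and $\bT_{U(4)}$-invariance of $|p_{ij}|$), and the logical step ``equality iff $p_{12}=0$ or $p_{34}=0$ iff $\zeta_1=0$'' is sound: whichever of the two loci is consistent with $|\xi_1+\xi_2| = |\xi_3+\xi_4|$ is the nonempty one. Your argument buys strictly more than the paper's: it re-derives the range $[\max\{|r_1-r_2|,|r_3-r_4|\}, \min\{r_1+r_2,r_3+r_4\}]$ rather than assuming it, it proves the converse direction (the maximum is attained \emph{only} on $\zeta_1=0$), and the formula $\varphi_{ij}^2 = (r_i+r_j)^2 - \tfrac14|p_{ij}|^2$ is a reusable piece of bookkeeping. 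The paper's argument is shorter once the range of $\varphi_{13}$ is taken as known, and avoids any computation with the Hopf map.
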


\begin{proof}
Assume $r_1+r_2 \le r_3+r_4$ so that 
$\max \varphi_{13} = r_1 + r_2$.
Take a point 
$[z_i, w_i]_i \in \Gr(2,4) = \Mat_{4 \times 2}(\bC) \GIT_{\lambda} U(2)$ 
such that $\varphi_{13}$ attains its maximum
at $\bsxi = (\nu(z_i, w_i))_{i=1, \dots, 4} \in \scM_{\bsr}$,
where $\nu$ is the Hopf fibration given in \eqref{eq:Hopf}.
Since the side vectors 
$\xi_1 = \nu(z_1, w_1)$ and $\xi_2 = \nu(z_2, w_2)$
of $\bsxi$ have the same direction,
$(z_1, w_1)$ and $(z_2, w_2)$ are proportional,
which implies that
\begin{equation}
  p_{12} = \det \begin{pmatrix}
    z_1 & w_1 \\ z_2 & w_2 \end{pmatrix}
  = 0.
\end{equation}
Hence we obtain $\zeta_1 = p_{12}p_{34} = 0$.
In the case where $r_1+r_2 \ge r_3+r_4$, 
we have $p_{34}=0$ at $[z_i, w_i]_i \in \Gr(2,4)$ such that 
$\max \varphi_{13}$ is attained at 
$\bsxi = (\nu(z_i, w_i))_i \in \scM_{\bsr}$, 
which leads to the same conclusion.
The proof for $\varphi_{24}$ is similar.
\end{proof}

\begin{remark}
The points $\zeta_1 = 0$ and $\zeta_3 = 0$
corresponds to $x_3 = \infty$ and $x_3 = 0$ respectively.
The point $\zeta_2 = p_{13} p_{24} = 0$
corresponding to $x_3 = -1$ is given by
quadrangles
satisfying $\xi_1 \parallel \xi_3$ or $\xi_2 \parallel \xi_4$,
i.e., trapezoids.
If $\bsr$ is general,
then there is only one trapezoid,
which must be planar
and thus lies on the boundary of
\pref{fg:bending_image1}.
\end{remark}

\begin{proposition}
For $t \in (0,1)$, the discriminant locus of 
the completely integrable system $\Psi_t$ 
inside $\Int B_t$ is
\begin{align}
  &\left\{  (u_1, u_2, u_3, u_4) \left| 
  \begin{array}{l}
  \bsr = ( \frac{u_1}2, \frac{u_2}2, \frac{u_3}2,
  \lambda - \frac 12(u_1 + u_2 + u_3)) \in H_2 \\
  u_4 = (1-t) ( r_1 - r_2)^2 - t (r_1 - r_4)^2
  \end{array}
  \right. \right\} \\
  &=
  \{ (2r_1, 2r_2, 2r_3, (1-t) ( r_1 - r_2)^2 - t (r_1 - r_4)^2)
  \mid \bsr = (r_i) \in H_2 \}.
\end{align}
\end{proposition}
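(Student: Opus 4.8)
The plan is to analyze $\Psi_t$ fiberwise over the octahedral base of its first three components. Indeed $(\psi_{12},\psi_{23},\psi_{34})$ is, up to the relation $\sum\psi_{i,i+1}=2\lambda$, the moment map $\mu_{\bT_{U(4)}}$, while $\psi_t$ is $\bT_{U(4)}$-invariant and restricts on each reduced space $\scM_{\bsr}=\mu_{\bT_{U(4)}}^{-1}(2\bsr)/\bT_{U(4)}$ to $\varphi_t=(1-t)\varphi_{13}^2-t\varphi_{24}^2\colon\scM_{\bsr}\to\bR$. First I would record the reduction of criticality: for $\bsr\in\Int\mu_{\bT_{U(4)}}(\scO_{\lambda})$ and $p\in\mu_{\bT_{U(4)}}^{-1}(2\bsr)$, the point $p$ is critical for $\Psi_t$ if and only if either (i) $p$ is a critical point of $\mu_{\bT_{U(4)}}$, which by \cite[Proposition 4.3]{Hausmann-Knutson_PSG} (equivalently, by the discriminant computation \eqref{eq:discriminant_G}) happens exactly when $\bsr$ lies on one of the walls $H_1\cup H_2\cup H_3$ and $[p]$ is a corresponding collinear quadrilateral $\bsxi_j^{*}$; or (ii) $p$ is a regular point of $\mu_{\bT_{U(4)}}$ whose image in $\scM_{\bsr}$ is a critical point of $\varphi_t$. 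In case (i) one has $\rank\,d\Psi_t\le\rank\,d\mu_{\bT_{U(4)}}+1\le 3<4$, so $p$ is genuinely critical; in case (ii) the $\bT_{U(4)}$-action is locally free near $p$, so $\scM_{\bsr}$ is smooth there and $d\Psi_t$ is surjective precisely when $d\varphi_t\ne 0$.

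Next I would show the case-(ii) contributions land in $\partial B_t$. By Lemma \ref{lm:deform_bending}, $\varphi_t\colon\scM_{\bsr}\to I_{t,\bsr}$ is an $S^1$-bundle over $\Int I_{t,\bsr}$ with single-point fibers over $\partial I_{t,\bsr}$, so every critical point of $\varphi_t$ on the smooth locus lies over an endpoint of $I_{t,\bsr}$. Since $B_t=\{(\bsr,u_4)\mid \bsr\in\mu_{\bT_{U(4)}}(\scO_{\lambda}),\ u_4\in I_{t,\bsr}\}$ and $\bsr\mapsto I_{t,\bsr}$ varies continuously with $\min\varphi_t<\max\varphi_t$ over the interior, the portion of $\partial B_t$ lying over $\Int\mu_{\bT_{U(4)}}(\scO_{\lambda})$ is exactly $\{u_4=\max\varphi_t(\bsr)\}\cup\{u_4=\min\varphi_t(\bsr)\}$; hence the case-(ii) critical points, as well as any critical point lying over $\partial\mu_{\bT_{U(4)}}(\scO_{\lambda})$, are sent by $\Psi_t$ into $\partial B_t$. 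It remains to locate the images of the collinear quadrilaterals $\bsxi_j^{*}$.

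This is the crux, and I would treat it through the branched double cover $\varphi=(\varphi_{13}^2,\varphi_{24}^2)\colon\scM_{\bsr}\to K:=\varphi(\scM_{\bsr})$ of the convex region appearing in the proof of Claim \ref{cl:convex}, for which $\varphi_t=\ell_t\circ\varphi$ with $\ell_t(v_1,v_2)=(1-t)v_1-tv_2$ strictly increasing in $v_1$ and strictly decreasing in $v_2$ for $t\in(0,1)$. Writing the collinear quadrilateral explicitly on each wall: on $H_1$ it has $\varphi_{13}=r_1+r_2=\max\varphi_{13}$ and $\varphi_{24}=|r_2-r_3|=\min\varphi_{24}$, so $\varphi(\bsxi_1^{*})$ is the corner of $K$ maximizing $v_1$ and minimizing $v_2$, i.e.\ the maximum of $\ell_t$ on $K$; on $H_3$ it has $\varphi_{13}=|r_1-r_2|=\min\varphi_{13}$ and $\varphi_{24}=r_2+r_3=\max\varphi_{24}$, so $\varphi(\bsxi_3^{*})$ is the corner of $K$ minimizing $v_1$ and maximizing $v_2$, i.e.\ the minimum of $\ell_t$; in both cases $\Psi_t(\bsxi_j^{*})\in\partial B_t$. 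On $H_2$, by contrast, $r_1+r_3=r_2+r_4$ forces $|r_1-r_4|=|r_2-r_3|$, the collinear quadrilateral has $\varphi_{13}=|r_1-r_2|=\min\varphi_{13}$ and $\varphi_{24}=|r_2-r_3|=\min\varphi_{24}$, so $\varphi(\bsxi_2^{*})$ is the corner of $K$ minimizing both $v_1$ and $v_2$; since $K$ is two-dimensional and $\ell_t$ is strictly monotone in each variable, $\ell_t(\varphi(\bsxi_2^{*}))$ lies strictly between $\min_K\ell_t$ and $\max_K\ell_t$, hence in $\Int I_{t,\bsr}$. Therefore $\Psi_t(\bsxi_2^{*})=\bigl(2r_1,2r_2,2r_3,(1-t)(r_1-r_2)^2-t(r_1-r_4)^2\bigr)\in\Int B_t$, and conversely every point of this shape (with $\bsr\in H_2\cap\Int\mu_{\bT_{U(4)}}(\scO_{\lambda})$) arises this way. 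Assembling the three steps, and noting that over the lower-dimensional strata where $\bsr$ lies on two or three walls the only critical values falling in $\Int B_t$ are still of this form, gives the proposition.

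The step I expect to be the main obstacle is this corner analysis together with the structural facts it rests on: pinning down the collinear configurations on each wall and confirming they exhaust the critical locus of $\mu_{\bT_{U(4)}}$ over a wall point (via \eqref{eq:discriminant_G} and \cite[Proposition 4.3]{Hausmann-Knutson_PSG}), computing $\varphi_{13}$ and $\varphi_{24}$ there, and matching them to the right extreme corner of $K$ so as to decide, using $t\in(0,1)$, whether $\ell_t$ is extremal. The degenerate multi-wall strata need a little extra care but do not alter the final set.
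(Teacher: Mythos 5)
Your argument is essentially the paper's: reduce along $\mu_{\bT_{U(4)}}$ to the polygon spaces, identify the critical points of $\Psi_t$ over interior $\bsr$ with the collinear configurations (nontrivial torus stabilizers) sitting at the three nodes of $\partial\varphi(\scM_{\bsr})$, and note that the $H_1$ and $H_3$ nodes are the maximizer and minimizer of $\ell_t=(1-t)v_1-tv_2$ on $\varphi(\scM_{\bsr})$ while only the lower-left ($H_2$) node can give a value in $\Int I_{t,\bsr}$ — your explicit dichotomy for criticality and the disposal of the case-(ii) critical points is a slightly more careful version of what the paper leaves implicit. The one soft inference is that the lower-left corner's $\ell_t$-value is interior merely because $K$ is two-dimensional and $\ell_t$ is strictly monotone in each variable: a point minimizing both coordinates can still extremize a mixed-sign linear form on a planar region, so one really needs the wedge of $K$ at that node to straddle the slope $(1-t)/t$ (which it does here, cf.\ Figure \ref{fg:singular_fiber}); the paper is equally brief on this point.
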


\begin{proof}
Let 
\begin{equation}
  L_t(\bsu) = \{ x \mid
  (\psi_{12}(x), \psi_{23}(x), \psi_{34}(x), \psi_t(x))
  = (u_1, u_2, u_3, u_4)  \}
\end{equation}
be a Lagrangian fiber of $\Psi_t$ 
over an interior point 
$\bsu = (u_1, u_2, u_3, u_4)  \in \Int B_t$.
Since the moment map of
the $\bT_{U(4)}$-action on $\scO_{\lambda}$ is given by
\begin{equation}
\mu_{\bT_{U(4)}} = (\psi_{12}, \psi_{23}, \psi_{34}, 
  2\lambda - (\psi_{12} + \psi_{23} + \psi_{34}) ),
\end{equation}
the fiber
$L_t(\bsu)$ lies in the level set $\mu_{\bT_{U(4)}}^{-1}(2 \bsr)$ 
for 
\begin{equation}
  \bsr = (u_1/2, u_2/2, u_3/2, \lambda - ( u_1 + u_2 + u_3 )/2),
\end{equation}
and it is mapped to a level set of $\varphi_t$ under 
the symplectic reduction
\begin{equation}
\pi \colon \mu_{\bT_{U(4)}}^{-1}(2 \bsr) \to \scM_{\bsr}.
\end{equation}
Recall that the stabilizer at $x \in \Gr(2,4)$ of the action of 
the maximal torus $\bT_{SU(4)} = \bT_{U(4)} \cap SU(4)$ 
of $SU(4)$ is nontrivial exactly when 
\begin{equation}
  x \in \{ p_{12} = p_{34} = 0 \} \cup
  \{ p_{13} = p_{24} = 0 \} \cup \{ p_{14} = p_{23} = 0 \}.
\end{equation}
In this case, the corresponding point 
$\bsxi = \pi (x) \in \scM_{\bsr}$,
regarded as a  ``quadrilateral''  in $\bR^3$, 
is contained in a straight line.
If $x $ lies in $\{ p_{12} = p_{34} = 0 \}$
(resp. $\{ p_{14} = p_{23} = 0 \}$),
one has $\bsr \in H_1$
(resp. $\bsr \in H_3$), and
\begin{align}
  \varphi \circ \pi (x) &= ((r_1 + r_2)^2, (r_1 - r_4)^2)
  = (\max_{\scM_{\bsr}} (\varphi_{13})^2, 
  \min_{\scM_{\bsr}} (\varphi_{24})^2) \\
  (\text{resp. }
  \varphi \circ \pi (x) &= ((r_1 - r_2)^2, (r_1 + r_4)^2)
  = (\min_{\scM_{\bsr}} (\varphi_{13})^2, 
  \max_{\scM_{\bsr}} (\varphi_{24})^2) \, )
\end{align}
is the ``lower-right'' (resp. ``upper-left'') node of 
$\partial (\varphi (\scM_{\bsr}))$.
Hence the fiber $L_t( \bsu )$ over
$\bsu = (u_1, u_2, u_3, u_4) = (2r_1, 2r_2, 2r_3, c)$
for $c \in \Int I_{t, \bsr}$ is singular
if and only if the line 
$\{ (v_1, v_2) \mid (1-t) v_1 - t v_2 = c \}$
passes through the ``lower-left'' node 
\begin{equation}
  ( (r_1 - r_2)^2, (r_1 - r_4)^2) 
  = (\min_{\scM_{\bsr}} (\varphi_{13})^2, 
  \min_{\scM_{\bsr}} (\varphi_{24})^2)
\end{equation}
of $\partial(\varphi(\scM_{\bsr}))$ 
as in Figure \ref{fg:singular_fiber},
which means that
$\bsr \in H_2$ and 
\begin{equation}
  c = (1-t) ( r_1 - r_2)^2 - t (r_1 - r_4)^2.
\end{equation}
\begin{figure}[h]
  \centering
    \includegraphics[bb=0 0 79 75]{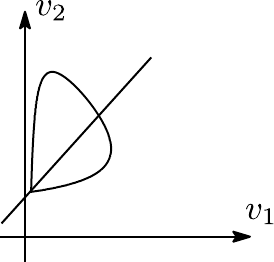}
  \caption{The image in $\partial (\varphi (\scM_{\br}))$ of 
              a singular fiber $L_t(\bsu)$.}
  \label{fg:singular_fiber}
\end{figure}
\end{proof}

Fix $t \in (0,1)$
and let $\bsu = (u_1,u_2,u_3,u_4)$ be a point in $\Int B_t$
which is not on the discriminant of $\Psi_t$,
so that
$
 L_t(\bsu) \coloneqq \Psi_t^{-1}(\bsu)
$
is a Lagrangian torus fiber.
First we assume
\begin{equation} \label{eq:1324}
\varphi_{13}(\bsu) / \varphi_{24}(\bsu) \gg 1,
\end{equation} 
so that
$\pi (L_t(\bsu))$ is a small simple closed curve enclosing the point 
$\zeta_{\max} \in \scM_{\bsr} \cong \bP^1$ at which 
$\max \varphi_t$ is attained.
One can deform $\pi(L_t(\bsu))$ into a level set of $\varphi_{13}$ 
without crossing $x_3 = 0, -1$
by a Hamiltonian flow which sends $\zeta_{\max}$ to $x_3 = \infty$.
We may assume that the Hamiltonian is supported in a small neighborhood 
of the domain bounded by $\pi (L_t(\bsu))$.
Note that if $\bsr$ lies in the wall $H_1$, then one has
$\zeta_{\max} = [0:1:1]$, which corresponds to $x_3 = \infty$.
Hence we may assume in this case that 
the support of the Hamiltonian 
does not contain the point $x_3 = \infty$.
We pull-back the Hamiltonian,
and multiply a cut-off function supported near the level set
$\mu_{\bT_{U(4)}}^{-1}(\bsr)$.
Since it Poisson-commutes with the Hamiltonians of the 
$\bT_{U(4)}$-action near $\mu_{\bT_{U(4)}}^{-1}(\bsr)$,
the induced Hamiltonian flow on $\Gr(2,4)$
sends $L_t(\bsu)$ to a Lagrangian torus fiber $L_{\Gamma}(\bsu')$ of 
the integrable system $\Psi_{\Gamma}$ over 
some point
$\bsu' = (u_{12},u_{23},u_{34},u_{13}) \in \Int \Delta_{\Gamma}$.
We simultaneously deform the locus
$\bigcup_{\bsr} \pi^{-1}( \{ \varphi_t = \max_{\scM_{\bsr}} \varphi_t \})$
to
\begin{equation}
  \{ \varphi_{13} = \max \varphi_{13} \} 
  = \{ p_{12} p_{34} = 0 \} \subset D,
\end{equation}
so the Lagrangian torus fiber does not cross this locus
during the Hamiltonian isotopy
from $L_t(\bsu)$ to $L_{\Gamma}(\bsu')$.
Note that $\Psi_t$ does not have a global action--angle coordinate
because of the existence of the discriminant in $\Int B_t$,
and $\bsu'$ is a local action--angle coordinate
in the region \pref{eq:1324}.

One can further deform $\pi (L_{\Gamma}(\bsu'))$ to 
a circle $\{ x_3 \in \bC \mid |x_3| = R \}$
by a Hamiltonian flow
without crossing $x_3 = 0, -1, \infty$.
The pull-back of the Hamiltonian gives a Hamiltonian flow $\Gr(2,4)$
which sends $L_{\Gamma}(\bsu')$ to a Lagrangian torus of the form
$T_{r,R} \times T'$ for a Clifford type Lagrangian torus 
$T_{r,R} \subset Y$
defined in \eqref{eq:T_rR}
and a two-torus $T' \subset (\bCx)^2$.

Note that $L_{\Gamma}(\bsu')$ (and hence $T_{r, R} \times T'$) 
can be deformed into a toric fiber in the toric variety $X_{\Gamma}$.
The anti-canonical divisor $D$ can be deformed into 
the toric divisor in $X_{\Gamma}$,
and thus the Maslov index of a holomorphic disk
in $(\Gr(2,4), L_{\Gamma}(\bsu'))$ 
(and hence that in $(\Gr(2,4), T_{r,R} \times T')$)
is twice the intersection number with $D$
(if the Lagrangian torus does not intersect $D$).
Then, by considering the projection $Y \times (\bCx)^2 \to \bC$,
no Lagrangian torus bounds holomorphic disks of 
non-positive Maslov index through the Lagrangian isotopy
from $L_t(\bsu)$ to $T_{r, R} \times T'$,
where we consider a compatible almost complex structure which induces 
the standard complex structure on $\bC$ under 
the symplectic reduction.
We also note that each holomorphic disk  in $\Gr(2,4) \setminus D'$
of Maslov index two bounded by $L_t(\bsu)$ 
intersects the divisor $\{ p_{14} = 0\} \subset D$ at one point, 
and hence it descends to a disk in $Y$ of the same Maslov index.
For $\beta \in \pi_2(Y, T_{r,R})$, 
let $\tilde{\beta} \in \pi_2(\Gr(2,4), L_t(\bsu))$ denote its lift 
induced from an inclusion 
$Y \cong Y \times \{ \mathrm{pt} \} \subset Y \times (\bCx)^2$.


\begin{lemma} \label{lm:beta12}
The boundaries
$\partial \tilde\beta_1, \partial \tilde\beta_2 \in \pi_1(L_t(\bsu))$ 
of the lifts of $\beta_1, \beta_2 \in \pi_2(Y, T_{r,R})$ 
defined in Example \ref{eg:Auroux}
are represented by Hamiltonian $S^1$-orbits of
$\psi_{13} - \psi_{23}$ and
$\psi_{13} - \psi_{12} - \psi_{23} - \psi_{34}$, respectively.
The symplectic areas
of $\tilde\beta_1, \tilde\beta_2$ are given by
\begin{align}
  \omega (\tilde\beta_1) 
  & = u_{12} - (u_{12} + u_{23} - u_{13}) = u_{13} - u_{23}, 
  \label{eq:symp_area1} \\
  \omega (\tilde\beta_2) 
  & = u_{13} - (u_{12} + u_{23} + u_{34}) + \lambda
  \label{eq:symp_area2}.
\end{align}
\end{lemma}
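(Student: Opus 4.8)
The plan is to handle the two assertions in turn: first identify the boundary classes by a homotopy computation in the chart $\Gr(2,4)\setminus D'\cong Y\times(\bCx)^2$ of \eqref{eq:complement_Gr(2,4)}, and then deduce the symplectic areas \eqref{eq:symp_area1}, \eqref{eq:symp_area2} by recognising $\tilde\beta_1$ and $\tilde\beta_2$ as two of the basic toric disk classes $\beta_j$ of Theorem \ref{th:potential} on $X_\Gamma$, where $\Gamma$ is the triangulation of the square by the diagonal $d_{13}$, so that $\Psi_\Gamma=(\psi_{12},\psi_{23},\psi_{34},\psi_{13})$.

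First I would fix the Hamiltonian isotopy constructed above, which carries $L_t(\bsu)$ to the product torus $T_{r,R}\times T'\subset Y\times(\bCx)^2$ and then to the fiber $L_\Gamma(\bsu')$ of $\Psi_\Gamma$; under it the $S^1$-orbits of $\psi_{12},\psi_{23},\psi_{34},\psi_{13}$ form an integral basis of $\pi_1(L_t(\bsu))\cong\bZ^4$. By Auroux's description in Example \ref{eg:Auroux}, $\partial\beta_1$ and $\partial\beta_2$ are the loops in $T_{r,R}$ along which $\arg x_1$, respectively $\arg x_2$, makes one full turn while all other coordinates are held fixed; hence $\partial\tilde\beta_1$ and $\partial\tilde\beta_2$ admit the same description as loops on $T_{r,R}\times T'$ (now also holding the $(\bCx)^2$-coordinates $s_1,s_2$ fixed). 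It then remains to expand these two loops in the basis above. The flows of $\psi_{12},\psi_{23},\psi_{34}$ are restrictions of the $\bT_{U(4)}^{\bC}$-action and are read off from \eqref{eq:torus_action0}; for the flow of $\psi_{13}$ I would use the identity $\psi_{13}=\varphi_{13}+\frac{1}{2}(\psi_{12}+\psi_{23})$ of \eqref{eq:GC-bending}, together with the fact that $\varphi_{13}$ Poisson-commutes with the torus action and descends, on $\scM_{\bsr}\cong\bP^1$, to the standard rotation of $x_3$ fixing $x_3=\infty$ (the maximum of $\varphi_{13}$, by Lemma \ref{lm:max_13}) and $x_3=0$ (its minimum, by the Remark following that lemma). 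Combining these flows and solving the resulting linear system over $\bZ$ yields $\partial\tilde\beta_1=[\psi_{13}-\psi_{23}]$ and $\partial\tilde\beta_2=[\psi_{13}-\psi_{12}-\psi_{23}-\psi_{34}]$, which is the first assertion.

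For the areas, I would first note that $\tilde\beta_i$ is a class of Maslov index two bounded by $L_t(\bsu)\cong L_\Gamma(\bsu')$ with no sphere bubble: $\Gr(2,4)$ is Fano, and as arranged in the discussion preceding the lemma no Lagrangian torus in the isotopy from $L_t(\bsu)$ to $T_{r,R}\times T'$ bounds a disk of non-positive Maslov index. In $\Gr(2,4)$ a class of Maslov index two is determined by its boundary in $\pi_1(L_\Gamma(\bsu'))$, since $\pi_2(\Gr(2,4))=\bZ[\bP^1]$ and $c_1(T\Gr(2,4))\cdot[\bP^1]=4$. Therefore $\tilde\beta_i$ is exactly the basic toric disk class $\beta_j$ of Theorem \ref{th:potential} whose boundary equals $\partial\tilde\beta_i$. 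Among the facets of $\Delta_\Gamma$ obtained from the triangle inequalities \eqref{eq:triangle_ineq1}--\eqref{eq:triangle_ineq3} for the triangles $\{1,2,3\}$ and $\{1,3,4\}$ (with $\psi_{14}=\lambda$), the one with conormal $\psi_{13}-\psi_{23}$ is $\{u_{13}-u_{23}\ge0\}$ and the one with conormal $\psi_{13}-\psi_{12}-\psi_{23}-\psi_{34}$ is $\{\lambda+u_{13}-u_{12}-u_{23}-u_{34}\ge0\}$; hence $\omega(\tilde\beta_1)=u_{13}-u_{23}$, which can be written $u_{12}-(u_{12}+u_{23}-u_{13})$, and $\omega(\tilde\beta_2)=u_{13}-(u_{12}+u_{23}+u_{34})+\lambda$, which are \eqref{eq:symp_area1} and \eqref{eq:symp_area2}. (Alternatively, $\tilde\beta_i$ has the same symplectic area as the Clifford disk $\beta_i$ in $Y$, and one could instead compute $\omega(\tilde\beta_i)$ from Auroux's conic-bundle picture and translate back via \eqref{eq:GC-bending}.) I expect the main obstacle to be the homotopy bookkeeping in the second paragraph: identifying the bending flow of $\varphi_{13}$ with the $x_3$-rotation and correctly tracking the half-integer windings of $\arg x_1$ and $\arg x_2$ that the term $\frac{1}{2}(\psi_{12}+\psi_{23})$ forces on the $\psi_{13}$-orbit, all with consistent orientations; checking that the chain of Hamiltonian isotopies preserves the homotopy and area data is a secondary but necessary point.
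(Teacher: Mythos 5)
Your second half (the areas) is sound and is essentially the paper's mechanism: both arguments reduce to the Cho--Oh formula $\omega(\tilde\beta_i)=\ell_i(\bsu)$ for basic Maslov-index-two disk classes in the toric degeneration, and your observation that a Maslov-index-two class in $\pi_2(\Gr(2,4),L)$ is determined by its boundary (since $\pi_2(\Gr(2,4))\cong\bZ$ is generated by a class of Maslov index $8$) is a clean way to pin down $\tilde\beta_i$ once its boundary class is known. The problem is upstream, in the first half.

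The gap is in the determination of $\partial\tilde\beta_1,\partial\tilde\beta_2$, which requires knowing the homotopy class of the Hamiltonian $S^1$-orbit of $\psi_{13}$ in the coordinates $(x_1,x_2,x_3,s_1,s_2)$; the route you propose --- write $\psi_{13}=\varphi_{13}+\tfrac12(\psi_{12}+\psi_{23})$ via \eqref{eq:GC-bending} and read off the $\varphi_{13}$-flow from the reduced space $\scM_{\bsr}\cong\bP^1$ --- does not deliver this. Two concrete problems. First, the input is off: by \pref{lm:max_13} and the Remark after it, $\zeta_3=0$ (i.e.\ $x_3=0$) is where $\varphi_{24}$ attains its \emph{maximum}; the minimum of $\varphi_{13}=|\xi_1+\xi_2|$ is attained at a planar quadrilateral with $\xi_1,\xi_2$ (or $\xi_3,\xi_4$) anti-parallel, which for generic $\bsr$ does not lie over $x_3=0$ (it does only when $\bsr\in H_3$), so the reduced $\varphi_{13}$-flow is not the standard rotation fixing $0$ and $\infty$. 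Second, and more seriously, the reduced flow on $\bP^1$ only records the $x_3$-winding of the orbit, while the class in $\pi_1(L)\cong\bZ^4$ also requires the windings in the $x_1,x_2,s_1,s_2$ directions, i.e.\ the lift of the bending flow to $\Gr(2,4)$; and since neither $\varphi_{13}$ nor $\tfrac12(\psi_{12}+\psi_{23})$ generates a $2\pi$-periodic flow on its own, the half-integer windings you flag cannot be tracked summand by summand. This is precisely the obstacle the paper removes by degenerating to the central fiber $X_\Gamma$ of \eqref{eq:toric_deg(2,4)}: there $\psi_{13}^0=\tfrac{\lambda}{2}\bigl(1-|p_{34}|^2/\sum|p_{ij}|^2\bigr)$, so its flow is the linear rotation $p_{34}\mapsto e^{-\sqrt{-1}\theta}p_{34}$, which in the coordinates $(x_1,x_2,x_3,s_1,s_2)$ becomes \eqref{eq:torus_action1}, and the expansion of $\partial\tilde\beta_1,\partial\tilde\beta_2$ in the basis of torus orbits is then immediate; the conclusion is transported back to $(X_t,L_t(\bsu))$ afterwards. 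Without this degeneration (or some other explicit determination of the $\psi_{13}$-orbit class), your ``linear system over $\bZ$'' has no coefficients to solve for, so the proof is incomplete at its central step.
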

 
\begin{proof}
Theorem \ref{th:toric_deg1} allows us to  
consider holomorphic disks in the central fiber $X_{\Gamma}$ of the toric degeneration
\begin{equation}
 \begin{array}{cll}
  \frakX_{\Gamma} & \!\!\! = \{ (\bsp, t) 
  \in \bP (\textstyle{\bigwedge^2 \bC^4}) \times \bC
  \mid p_{13}p_{24} = t p_{12}p_{34} + p_{14}p_{23} \} \\
  \downarrow \\
  \bC
 \end{array}
  \label{eq:toric_deg(2,4)}
\end{equation}
associated with $\Gamma$,
instead of those in $\Gr(2,4)$.
We recall a construction of the family 
$\Psi_{\Gamma}^t = (\psi_{ij}^t)_{\edg (i,j) \in \Prn \Gamma}$ 
of completely integrable systems connecting $\Psi_{\Gamma}$ 
and the toric moment map. 
Extend the actions of  $\bT_{U(4)}$ and $G(1,3) \subset U(4)$ 
on $\Gr(2,4)$ 
to those on $\bP(\bigwedge^2 \bC^4)$
in an obvious way,
and let 
\begin{align}
  \tilde\mu_{\bT_{U(4)}} (\bsp)
  & = (\tilde\psi_{12}(\bsp) , \tilde\psi_{23}(\bsp), 
         \tilde\psi_{34}(\bsp), \tilde\psi_{14}(\bsp)) \\
  & = \frac{\lambda}{2\sum |p_{ij}|^2} 
        \left( \sum_{j \ne 1} |p_{1j}|^2, \sum_{j \ne 2} |p_{2j}|^2, 
        \sum_{j \ne 3} |p_{3j}|^2 , \sum_{j \ne 4} |p_{4j}|^2
        \right),  \\
  \tilde\mu_{G(1,3)}(\bsp) 
  &= \frac{\lambda}{2 \sum |p_{ij}|^2}
  \begin{pmatrix} 
    \sum_{j \ne 1} |p_{1j}|^2 
      & p_{23} \overline{p_{13}} + p_{24} \overline{p_{14}} \\
    p_{13} \overline{p_{23}} + p_{14} \overline{p_{24}}
      & \sum_{j \ne 2} |p_{2j}|^2
  \end{pmatrix}
\end{align}
denote the moment  maps of these actions.
Then we obtain an extension
$\tilde\psi_{13} \colon \bP(\bigwedge^2 \bC^4) \to \bR$
of $\psi_{13}$,
which associates to 
$\bsp$
the maximum eigenvalues of $\tilde\mu_{G(1,3)}(\bsp)$.
The family of integrable systems $\Psi_{\Gamma}^t$ is given by 
 the restrictions $\psi_{ij}^t = \tilde\psi_{ij}|_{X_t}$,
$\edg(i,j) \in \Prn \Gamma$ 
to each fiber $X_t = f_{\Gamma}^{-1}(t)$.
Note that Poisson commutativity of $\Psi_{\Gamma}^t$
follows from the fact that 
the actions of  $\bT_{U(4)}$ and 
$G(1,3) \subset U(4)$ on $\bP (\bigwedge^2 \bC^2)$
preserve each fiber $X_t$.

Using the defining equation
\begin{equation}
  p_{13}p_{24} = p_{14}p_{23}
\end{equation}
of the central fiber $X_{\Gamma}$,
we have
\begin{align}
  \psi_{13}^0 
  &= \frac{\lambda}{2 \sum |p_{ij}|^2} ( |p_{12}|^2 + |p_{13}|^2 
    + |p_{14}|^2 + |p_{23}|^2 + |p_{24}|^2) \\
  &= \frac{\lambda}{2} \left( 1 - \frac{|p_{34}|^2}{\sum |p_{ij}|^2} \right),
\end{align}
which implies that Hamiltonian $S^1$-action of 
$\psi^0_{13}$ is given by 
\begin{equation}
  e^{\sqrt{-1} \theta} \cdot \bsp = 
  [p_{12}: p_{13}: p_{14}: p_{23}: p_{24}: e^{-\sqrt{-1}\theta} p_{34}].
\end{equation}
We consider a deformation family
\begin{equation}
  \frakY_{\Gamma} = \{ (x_1,x_2,x_3, t) \in \bC^3 \times \bC \mid 
  x_1 x_2 = t + x_3 \}
  \to \bC_t
\end{equation}
of $Y$ induced from the toric degeneration \eqref{eq:toric_deg(2,4)},
whose central fiber is given by
\begin{equation}
  Y_{\Gamma} = \{ (x_1, x_2, x_3) \in \bC^3 \mid x_1 x_2 = x_3 \}.
\end{equation}
The complement $X_{\Gamma} \setminus D'_{\Gamma}$ 
of the divisor
\begin{equation}
  D'_{\Gamma} = \{ p_{12} = 0 \} \cup \{p_{23}=0 \} \cup \{ p_{34} = 0 \}
  \label{eq:toric_divisor}
\end{equation}
on $X_{\Gamma}$ is identified with
$Y_{\Gamma} \times (\bCx)^2$,
on which the Hamiltonian $S^1$-action of $\psi^0_{13}$
is given by 
\begin{equation}
  e^{\sqrt{-1}\theta} (x_1, x_2, x_3, s_1,s_2) = 
  (x_1, e^{\sqrt{-1}\theta}x_2, e^{\sqrt{-1}\theta}x_3, s_1, 
  e^{- \sqrt{-1}\theta} s_2).
  \label{eq:torus_action1}
\end{equation}
Since the lifts
 $\tilde\beta_1$, $\tilde\beta_2$,
regarded as relative homotopy classes in 
$Y_{\Gamma} \times (\bCx)^2$, 
are represented by holomorphic disks of the form 
$(x_2, s_1, s_2)=\text{const.}$ and 
$(x_1, s_1, s_2)=\text{const.}$, respectively,
it follows from \eqref{eq:torus_action0} and \eqref{eq:torus_action1}
that
$\partial \tilde\beta_1$, $\partial \tilde\beta_2$ are represented by
the following Hamiltonian $S^1$-orbits
\begin{align}
  (e^{\sqrt{-1}\theta} x_1, x_2, e^{\sqrt{-1}\theta}x_3, s_1, s_2) 
  & \longleftrightarrow \psi_{13}^0 - \psi_{23}^0, 
    \label{eq:bdry_beta1}\\
  (x_1, e^{\sqrt{-1}\theta} x_2, e^{\sqrt{-1}\theta}x_3, s_1, s_2) 
  & \longleftrightarrow  
     \psi_{13}^0 + \psi_{14}^0 \notag \\
  & \phantom{ \longleftrightarrow }   
  = \psi_{13}^0 - \psi_{12}^0 -\psi_{23}^0 -\psi_{34}^0 + \text{const.},
    \label{eq:bdry_beta2}
\end{align} 
respectively.
Here we recall the formula \cite[Theorem 8.1]{MR2282365}
for symplectic area of holomorphic disks Maslov index 2 in 
a toric manifold.
Suppose that a holomorphic disk
$w \colon (D^2, \partial D^2) \to (X_{\Gamma}, L_0(\bsu))$
of Maslov index 2 
intersects a toric divisor $D_{\bsv}$ corresponding to
the facet
\begin{equation}
  \{ \bsu \in \Delta_{\Gamma} \mid 
  \ell(\bsu) = \langle \bsv, \bsu \rangle - \tau = 0 \}
\end{equation}
of the moment polytope.
Then 
\begin{equation}
  [ w(\partial D^2)] = \bsv \in H^1(L_0(\bsu); \bZ) \cong \bZ^2,
\end{equation}
and the symplectic ares of $w$ is given by
\begin{equation}
  \int_{D^2} w^* \omega = \ell (\bsu).
  \label{eq:area_formula}
\end{equation}
Comparing \eqref{eq:bdry_beta1} and \eqref{eq:bdry_beta2}
with the defining inequalities
\begin{equation}
\begin{alignedat}{9}
  \ldbox{\lambda} &&&& \ldbox{u_{12}+u_{23}+u_{34}-\lambda} &&&& \ldbox{0} \\
  & \uge && \dge && \uge && \dge \\
  && \ldbox{u_{13}} &&&& u_{12}+u_{23}-u_{13} \\
  &&& \uge && \dge && \\
  &&&& \ldbox{u_{12}} 
\end{alignedat}
\end{equation}
of the moment polytope $\Delta_{\Gamma}$,
it follows that $\tilde\beta_1$ and $\tilde\beta_2$ intersect 
toric divisors corresponding to the facets of $\Delta_{\Gamma}$
defined by
\begin{align}
  \ell_1 (\bsu) &= u_{12} - (u_{12} + u_{23} - u_{13}) = u_{13} - u_{23}, 
    \label{eq:disk1} \\
  \ell_2(\bsu)  & = u_{13} - (u_{12} + u_{23} + u_{34} - \lambda),
    \label{eq:disk2}
\end{align}
respectively.
By topological reason, the results 
$\omega (\tilde\beta_i) = \ell_i (\bsu)$ 
proved in $(X_{\Gamma}, L_0(\bsu))$ 
is true also in $(X_t, L_t(\bsu))$ for $t > 0$.
\end{proof}

Note that the defining functions \eqref{eq:disk1} and \eqref{eq:disk2}
for $\Delta_{\Gamma}$ correspond to the following triangle inequalities
\begin{align}
  u(2,3) - u(1,2) &\le u(1,3), \\
  u(3,4) - u(1,4) &\le u(1,3)
\end{align}
in the coordinates defined in \eqref{eq:bending_coord},
respectively.

Next we assume that  $\bsu \in \Int B_t$ satisfies
$
 \varphi_{13}(\bsu) / \varphi_{24}(\bsu) \ll 1.
$
Then the image $\pi (L_t(\bsu)) \subset \scM_{\bsr}$ can be 
deformed into a level set of $\varphi_{24}$,
which is a simple closed curve enclosing the point $\zeta_3 = 0$,
and thus $L_t(\bsu)$ is deformed into a fiber $\Psi_{\Gamma'}(\bsu'')$
of the other completely integrable system $\Psi_{\Gamma}$
for some $\bsu'' = (u_{12},u_{23},u_{34},u_{24})$.
Since the point $\zeta_3=0$ corresponds to the origin in the $x_3$-plane,
the fiber $L_{\Gamma'}(\bsu'')$  
can be deformed into $T_{r,R} \times T'$
for a Chekanov type Lagrangian torus $T_{r,R} \subset Y$ and 
a two-torus $T'$ in $(\bCx)^2$.

\begin{lemma} \label{lm:beta3}
The boundary $\partial \tilde\beta_3 \in \pi_1(L_t(\bsu))$ 
of the lift of the class $\beta_3 \in \pi_2(Y, T_{r,R})$
defined in Example \ref{eg:Auroux}
is represented by a Hamiltonian $S^1$-orbit of
$\psi_{24}$, and the symplectic area
of $\tilde\beta_3$ is given by
\begin{equation}
  \omega (\tilde\beta_3) = \lambda - u_{24}.
\end{equation}
\end{lemma}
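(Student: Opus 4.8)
The plan is to re-run the proof of Lemma~\ref{lm:beta12}, replacing the toric degeneration attached to $\Gamma$ by the one attached to $\Gamma'$. Applying Theorem~\ref{th:toric_deg1} to $\Gamma'$, I would pass to the central fiber $X_{\Gamma'} = f_{\Gamma'}^{-1}(0)$ of the degeneration $f_{\Gamma'}\colon\frakX_{\Gamma'}\to\bC$, where
\[
 \frakX_{\Gamma'} = \lc (\bsp, t) \in \bP(\textstyle{\bigwedge^2}\bC^4) \times \bC \relmid p_{13}p_{24} = p_{12}p_{34} + t\, p_{14}p_{23} \rc ,
\]
which interpolates between $\Gr(2,4)$ (at $t=1$) and $X_{\Gamma'}$ (at $t=0$), together with the family $\Psi_{\Gamma'}^t = (\psi^t_{ij})_{\edg(i,j)\in\Prn\Gamma'}$ obtained, exactly as for $\Gamma$, by restricting to the fibers of $\frakX_{\Gamma'}$ the extensions to $\bP(\textstyle{\bigwedge^2}\bC^4)$ of the $\psi_{ij}$ built from the moment maps of the $\bT_{U(4)}$- and $G(2,4)$-actions.

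I would first compute $\psi^0_{24}$ on $X_{\Gamma'}$. Running the eigenvalue computation carried out for $\psi^0_{13}$ in the proof of Lemma~\ref{lm:beta12}---now with $G(2,4)\cong U(2)$ acting on the second and third rows, and with the central-fiber relation $p_{13}p_{24} = p_{12}p_{34}$ in place of $p_{13}p_{24} = p_{14}p_{23}$---gives
\[
 \psi^0_{24} = \frac{\lambda}{2}\lb 1 - \frac{|p_{14}|^2}{\sum_{i<j}|p_{ij}|^2} \rb ,
\]
so the Hamiltonian $S^1$-action of $\psi^0_{24}$ is $e^{\sqrt{-1}\theta}\cdot\bsp = [p_{12} : p_{13} : e^{-\sqrt{-1}\theta}p_{14} : p_{23} : p_{24} : p_{34}]$. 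Dividing the defining equation of $\frakX_{\Gamma'}$ by $p_{12}p_{34}$ produces a degeneration $\frakY_{\Gamma'} = \lc (x_1,x_2,x_3,t)\in\bC^3\times\bC \relmid x_1 x_2 = 1 + t\, x_3 \rc \to \bC_t$ of $Y$ whose central fiber is $Y_{\Gamma'} = \lc (x_1,x_2,x_3)\in\bC^3 \relmid x_1 x_2 = 1 \rc \cong \bCx_{x_1}\times\bC_{x_3}$; and, just as for $\Gamma$, the complement of $D'_{\Gamma'} = \lc p_{12}=0 \rc\cup\lc p_{23}=0 \rc\cup\lc p_{34}=0 \rc$ in $X_{\Gamma'}$ is identified with $Y_{\Gamma'}\times(\bCx)^2$. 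Transporting the above $S^1$-action through \eqref{eq:complement_Gr(2,4)} shows that $\psi^0_{24}$ generates $(x_1,x_2,x_3,s_1,s_2)\mapsto(x_1,x_2,e^{-\sqrt{-1}\theta}x_3,s_1,s_2)$. Now the class $\beta_3$ of Example~\ref{eg:Auroux} is a section of $f\colon Y\to\bC$ over $D^2(R)$, and it degenerates to a section over $D^2(R)$ of the projection $f\colon Y_{\Gamma'}\to\bC$ onto $\bC_{x_3}$; hence its lift $\tilde\beta_3$ is represented by a holomorphic disk on which $(x_1,s_1,s_2)$ is constant while $x_3$ sweeps out $D^2(R)$. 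Its boundary is therefore, up to orientation, a Hamiltonian $S^1$-orbit of $\psi^0_{24}$, and deforming back to $\Gr(2,4)$ gives the first assertion.

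For the symplectic area, the disk representing $\tilde\beta_3$ meets $\lc p_{14}=0 \rc$ exactly once and transversally (at $x_3=0$), so $\tilde\beta_3$ is the Maslov-index-two disk of $X_{\Gamma'}$ hitting the toric divisor $\lc p_{14}=0 \rc$; in particular its boundary class is $\pm$ the generator of $\pi_1$ dual to the action coordinate $u_{24}$, so by the toric area formula \eqref{eq:area_formula} (\cite[Theorem~8.1]{MR2282365}) $\omega(\tilde\beta_3)$ equals the value at $\bsu''$ of the facet-defining function of $\Delta_{\Gamma'}$ with that gradient. Inspecting the inequalities \eqref{eq:triangle_ineq1}--\eqref{eq:triangle_ineq3} for $\Gamma'$, the only such facet is the one given by \eqref{eq:triangle_ineq2} for the triangle $(1,2,4)$, namely $u_{14} - u_{24}\ge 0$, which reads $\lambda - u_{24}\ge 0$ since $u_{1n} = \psi_{1n} = \lambda$ is constant. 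Thus $\omega(\tilde\beta_3) = \lambda - u_{24}$ in $(X_{\Gamma'},L_0(\bsu''))$, and by the same topological reason invoked at the end of the proof of Lemma~\ref{lm:beta12} it holds in $(X_t, L_t(\bsu))$ for $t > 0$ as well. The one genuinely computational ingredient is the closed form for $\psi^0_{24}$; it runs line-for-line parallel to the computation of $\psi^0_{13}$, the only delicate point being the index bookkeeping---the complementary pair $\{1,4\}$ of rows, and the fact that $u_{1n}$ stands for the constant $\psi_{1n}=\lambda$ rather than the leaf Hamiltonian---when matching $\tilde\beta_3$ to the right facet of $\Delta_{\Gamma'}$.
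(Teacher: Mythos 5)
Your proposal is correct and follows essentially the same route as the paper: pass to the central fiber of the toric degeneration $\frakX_{\Gamma'}$ with relation $p_{13}p_{24}=p_{12}p_{34}$, compute $\psi^0_{24}=\frac{\lambda}{2}\bigl(1-|p_{14}|^2/\sum|p_{ij}|^2\bigr)$ so that its Hamiltonian circle rotates $x_3$ on $Y_{\Gamma'}\times(\bCx)^2$, identify $\tilde\beta_3$ with the Maslov-index-two disk hitting the toric divisor of the facet $\lambda-u_{24}\ge 0$, and invoke the Cho--Oh area formula together with deformation invariance. The only cosmetic difference is that you read off the facet from the triangle inequalities \eqref{eq:triangle_ineq2} for the triangle $(1,2,4)$, while the paper writes out the Gelfand--Cetlin-type pattern for $\Delta_{\Gamma'}$; these give the same answer.
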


\begin{proof}
We consider the central fiber $X_{\Gamma'}$ of the toric degeneration 
\begin{equation}
  \frakX_{\Gamma'} =\{ (\bsp, t) 
  \in \bP (\textstyle{\bigwedge^2 \bC^4}) \times \bC
  \mid p_{13}p_{24} =  p_{12}p_{34} + t p_{14}p_{23} \}
  \label{eq:toric_deg(2,4)'}
\end{equation}
associated with $\Gamma'$, and 
let $\Psi_{\Gamma'}^0 = (\varphi_{ij}^0)_{\edg(i,j) \in \Prn \Gamma'}$ 
denote the toric moment map on $X_{\Gamma'}$
throughout this proof.
Since $X_{\Gamma'}$ is defined by
\begin{equation}
  p_{13}p_{24} =  p_{12}p_{34} ,
\end{equation}
one has
\begin{align}
  \psi_{24}^0 
  &= \frac{\lambda}{2 \sum |p_{ij}|^2} (|p_{12}|^2+|p_{13}|^2
    +|p_{23}|^2 + |p_{24}|^2+|p_{34}|^2) \\
  &= \frac{\lambda}{2} \left( 1 -  \frac{|p_{14}|^2}{\sum |p_{ij}|^2} \right),
\end{align}
which implies that its Hamiltonian $S^1$-action is 
\begin{equation}
  e^{\sqrt{-1} \theta} \cdot \bsp = 
  [p_{12}: p_{13}: e^{-\sqrt{-1}\theta} p_{14}: p_{23}: p_{24}: p_{34}].
\end{equation}
Consider the family 
\begin{equation}
  \frakY_{\Gamma'} = \{ (x_1,x_2,x_3, t) \in \bC^3 \times \bC \mid 
  x_1 x_2 = 1 + t x_3 \}
\end{equation}
of affine varieties induced from \eqref{eq:toric_deg(2,4)'},
whose central fiber is given by 
\begin{equation}
  Y_{\Gamma'} = \{ (x_1, x_2, x_3) \in \bC^3 \mid x_1 x_2 = 1 \}.
\end{equation}
For a  divisor $D'_{\Gamma'}$ on $X_{\Gamma'}$ 
defined by the same equation as \eqref{eq:toric_divisor}, 
the complement $X_{\Gamma'} \setminus D'_{\Gamma'}$
is identified with $Y_{\Gamma'} \times (\bCx)^2$,
on which the Hamiltonian $S^1$-action 
of $\psi_{24}^0$ is given by
\begin{equation}
  e^{\sqrt{-1}\theta} (x_1, x_2, x_3, s_1,s_2) = 
  (x_1, x_2, e^{-\sqrt{-1}\theta}x_3, s_1, s_2).
\end{equation}
From this and \eqref{eq:torus_action0}, 
the boundary $\partial \tilde\beta_3$,
regarded as a class in the central fiber,
is represented by 
a $S^1$-orbit of $\psi_{24}$.
Since the moment polytope
$\Delta_{\Gamma'}$ is defined by
\begin{equation}
\begin{alignedat}{9}
  \ldbox{\lambda} &&&& \ldbox{u_{12}+u_{23}+ u_{34}-\lambda} &&&& \ldbox{0} \\
  & \uge && \dge && \uge && \dge \\
  && \ldbox{u_{24}} &&&& u_{23}+u_{34}-u_{24} \\
  &&& \uge && \dge && \\
  &&&& \ldbox{u_{23}} 
\end{alignedat},
\end{equation}
the holomorphic disk in $\tilde\beta_3$ intersects the toric divisor
corresponding to the facet of $\Delta_{\Gamma'}$ defined by  
\begin{equation}
  \ell_3(\bsu) = \lambda - u_{24},
\end{equation}
which corresponds to the triangle inequality
\begin{equation}
  u(2,4) \le u(1,2) + u(1,4).
\end{equation}
Lemma \ref{lm:beta3} follows the area formula \eqref{eq:area_formula}
and invariance of symplectic areas under the deformation.
\end{proof}

Finally we take a point $\bsu \in B_t$ on the wall,
and consider the lift $\tilde{\alpha}$
of the class
$\alpha \in \pi_2(Y, T_{r,R})$ of Maslov index zero.
Since $\alpha = \beta_1 - \beta_2$, 
we have the following:

\begin{lemma} \label{lm:alpha}
The boundary
$\partial \tilde\alpha \in \pi_1(L_t(\bsu))$
of the lift of $\alpha$  
is represented by a Hamiltonian $S^1$-orbit of
$\psi_{12}+ \psi_{34}$, and the symplectic area
of $\tilde\alpha$ is given by
\begin{equation}
  \omega (\tilde\alpha) = u_{12} + u_{34} - \lambda.
\end{equation}
\end{lemma}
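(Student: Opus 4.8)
The plan is to deduce the statement directly from Lemma \ref{lm:beta12} together with the relation $\alpha = \beta_1 - \beta_2$ in $\pi_2(Y, T_{r,R})$ recorded at the end of Example \ref{eg:Auroux}: there $z_\alpha = z_{\beta_1}/z_{\beta_2}$, and since $z_{\beta}z_{\beta'} = z_{\beta+\beta'}$ by \eqref{eq:z_beta}, this forces $\alpha = \beta_1 - \beta_2$ as relative homotopy classes. First I would observe that the inclusion $Y \cong Y \times \{\mathrm{pt}\} \hookrightarrow Y \times (\bCx)^2 \cong \Gr(2,4) \setminus D'$ induces a group homomorphism $\pi_2(Y, T_{r,R}) \to \pi_2(\Gr(2,4), L_t(\bsu))$, so that $\tilde\alpha = \tilde\beta_1 - \tilde\beta_2$; applying the boundary homomorphism $\partial \colon \pi_2(\Gr(2,4), L_t(\bsu)) \to \pi_1(L_t(\bsu))$ gives $\partial\tilde\alpha = \partial\tilde\beta_1 - \partial\tilde\beta_2$.

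Next I would use that $L_t(\bsu)$ is a Lagrangian torus fiber of a completely integrable system assembled from commuting Hamiltonian torus actions, so that $\pi_1(L_t(\bsu)) \cong H_1(L_t(\bsu);\bZ)$ is identified with the lattice of periodic Hamiltonian $S^1$-orbits, compatibly with addition of classes; hence $\partial\tilde\beta_1 - \partial\tilde\beta_2$ is represented by the $S^1$-orbit whose Hamiltonian is the difference of the Hamiltonians representing $\partial\tilde\beta_1$ and $\partial\tilde\beta_2$. By Lemma \ref{lm:beta12} these are $\psi_{13} - \psi_{23}$ and $\psi_{13} - \psi_{12} - \psi_{23} - \psi_{34}$, whose difference is $\psi_{12} + \psi_{34}$, giving the first assertion. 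For the area, $\omega$ is additive on relative classes, so $\omega(\tilde\alpha) = \omega(\tilde\beta_1) - \omega(\tilde\beta_2)$, and substituting \eqref{eq:symp_area1} and \eqref{eq:symp_area2} yields $\omega(\tilde\alpha) = (u_{13} - u_{23}) - \big(u_{13} - (u_{12} + u_{23} + u_{34}) + \lambda\big) = u_{12} + u_{34} - \lambda$.

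I do not expect a serious obstacle here: the argument is essentially bookkeeping once $\alpha = \beta_1 - \beta_2$ is in hand. The only points requiring a little care are fixing the orientation convention so that the sign in $\alpha = \beta_1 - \beta_2$ (rather than $\beta_2 - \beta_1$) is the correct one, and recording that the identification of $\pi_1$ of a fiber of $\Psi_t$ with the lattice of Hamiltonian circle orbits — already used implicitly in Lemma \ref{lm:beta12} and Lemma \ref{lm:beta3} — respects the group structure, so that subtraction of classes corresponds to subtraction of the generating Hamiltonians.
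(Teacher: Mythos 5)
Your proposal is correct and follows essentially the same route as the paper: the authors justify the lemma precisely by the relation $\alpha = \beta_1 - \beta_2$ combined with Lemma \ref{lm:beta12}, subtracting the Hamiltonians $(\psi_{13}-\psi_{23}) - (\psi_{13}-\psi_{12}-\psi_{23}-\psi_{34}) = \psi_{12}+\psi_{34}$ and the areas \eqref{eq:symp_area1} and \eqref{eq:symp_area2}. The paper only adds, as an optional cross-check, that $\tilde\alpha$ is represented by a disk with $(x_2,s_1,s_2)$ constant, so that \eqref{eq:torus_action0} directly exhibits its boundary as an orbit of $\psi_{12}+\psi_{34}$; your bookkeeping about additivity of $\partial$, of $\omega$, and of the identification of $\pi_1$ of the fiber with the lattice of Hamiltonian circle orbits is exactly what the paper uses implicitly.
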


One can see this also from the fact that 
the class $\tilde\alpha$ is represented by
a disk of the form $(x_2, s_1, s_2) = \text{const.}$, 
and thus 
\eqref{eq:torus_action0} implies that 
the boundary of the disk is a Hamiltonian $S^1$-orbit 
of $\psi_{12}+\psi_{34}$.

From Lemmas \ref{lm:beta12}, \ref{lm:beta3}, \ref{lm:alpha},
the  functions 
$z_{\beta}(b) = T^{\omega (\beta)} \hol_{b}(\partial \beta)$
for $\beta = \tilde\beta_1, \tilde\beta_2, \tilde\beta_3, \tilde\alpha$ 
are given by
\begin{align}
  z_{\tilde\beta_1} &= \frac{y_{13}}{y_{23}}, \\
  z_{\tilde\beta_2} &= \frac{q y_{13}}{y_{12}y_{23}y_{34}}, \\
  z_{\tilde\beta_3} &= \frac{q}{y_{24}},\\
  z_{\tilde\alpha} &= \frac{y_{12}y_{34}}{q},
\end{align}
where $q = T^{\lambda}$ for the Novikov parameter $T$,
and therefore the coordinate change \eqref{eq:cluster_transf} 
gives 
\begin{align}
  z_{\tilde\beta_3} = z_{\tilde\beta_1} + z_{\tilde\beta_2}
  = z_{\tilde\beta_2}(1 + z_{\tilde\alpha}),
  \label{eq:wallcrossing_Gr}
\end{align}
which coincides with the wall crossing formula
\eqref{eq:Auroux_wallcrossing}.

\begin{remark}
The Lagrangian torus fibers
$L_{\Gamma}(\bsu_0)$ and $L_{\Gamma'}(\bsu'_0)$ 
above the centers
\begin{align}
  \bsu_0 & = ( u_{12}, u_{23}, u_{34}, u_{13}) 
  = \biggl( \frac{\lambda}2, \frac{\lambda}2, 
       \frac{\lambda}2, \frac{3\lambda}4 \biggr)
  \in \Delta_{\Gamma}, \\
  \bsu'_0 & =( u_{12}, u_{23}, u_{34}, u_{24}) 
  = \biggl(\frac{\lambda}2, \frac{\lambda}2, 
       \frac{\lambda}2, \frac{3\lambda}4 \biggr)
  \in \Delta_{\Gamma'}
\end{align}
are monotone
by \cite[Theorem B]{1801.07554}.
The fiber $L_{\Gamma}(\bsu_0)$ 
(resp. $L_{\Gamma'}(\bsu'_0)$) is contained in the
complement $\Gr(2,4) \setminus D$ of 
the anti-canonical divisor $D$ given by
\eqref{eq:anti-canonical_Gr(2,4)}, 
and the image under the projection
$\pi \colon \Gr(2,4) \setminus D \to \bC_{x_3}$
is a simple closed curve enclosing 
$x_3 = 0, -1$ (resp. $x_3 = 0$).
Note that 
$L_{\Gamma}(\bsu_0)$ and $L_{\Gamma'}(\bsu'_0)$ lie 
on the level set $\mu_{\bT_{U(4)}}^{-1}(\lambda/2, \dots, \lambda/2)$,
and the intersection 
$\pi (L_{\Gamma}(\bsu_0)) \cap \pi (L_{\Gamma'}(\bsu'_0))$,
viewed as a subset in the polygon space, consists of two points
corresponding to spatial quadrilaterals such that the configuration
of vertices define a regular tetrahedron.
The fiber $L = L_{\Gamma} (\lambda/2, \dots, \lambda/2)$ of $\Psi_{\Gamma}$
on the boundary point 
\begin{equation}
  (u_{12}, u_{23}, u_{34}, u_{13}) 
  = (\lambda/2, \lambda/2, \lambda/2, \lambda/2)
  \in \partial \Delta_{\Gamma}
\end{equation}
is a Lagrangian $U(2) \cong S^1 \times S^3$
(see \cite[Proposition 2.7]{MR3601889}). 
It is easy to see that $\varphi_{13} =0$ on $L$
and the image $\varphi(L)$ is a line segment connecting
$x_3 = 0$ and $x_3 = -1$ (see Figure \ref{fg:mutation_config}). 
Therefore the Lagrangian torus $L_{\Gamma'}(\bsu'_0)$ together with
the inverse image in $L$ of the line segment connecting 
$x_3 = -1$ and the intersection point 
$\varphi(L_{\Gamma'}(\bsu'_0)) \cap \varphi (L)$
give a higher dimensional mutation configuration 
discussed in \cite[Section 5]{pascaleff2017wall}.
Similarly, the fiber $L'$ of $\Psi_{\Gamma'}$ on
the boundary point
\begin{equation}
  (u_{12}, u_{23}, u_{34}, u_{24}) 
  = (\lambda/2, \lambda/2, \lambda/2, \lambda/2)
  \in \partial \Delta_{\Gamma'}
\end{equation}
is a Lagrangian $U(2)$, 
whose image in the $x_3$-plane is $\varphi (L') = (- \infty, -1]$. 
Thus the pair of $L_{\Gamma}(\bsu_0)$ and 
the inverse image of the line segment connecting $x_3=-1$ and 
$\varphi(L_{\Gamma}(\bsu_0)) \cap \varphi (L')$ gives 
a mutation configuration.
\begin{figure}[h]
  \centering
    \includegraphics[bb=0 0 152 73]{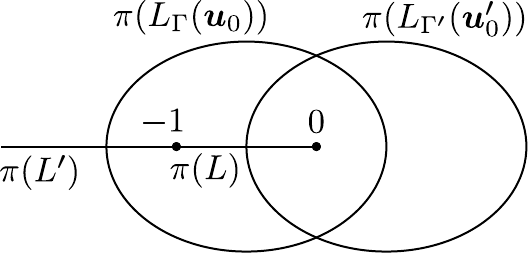}
  \caption{Images of Lagrangian fibers in the $x_3$-plane.}
  \label{fg:mutation_config}
\end{figure}
The Lagrangian tori
$L_{\Gamma}(\bsu_0)$ and $L_{\Gamma'}(\bsu'_0)$
are related by
(the multiplication with a 2-torus of)
a Lagrangian surgery
studied in \cite{MR3701933},
and the wall-crossing formula between them
is obtained from
\cite[Theorem 5.7]{pascaleff2017wall}.
\end{remark}


\section{Wall-crossing formula on general $\Gr(2,n)$}

In this section we complete the proof of Theorem \ref{th:wall-crossing}.

We consider the family
$f_{\Gamma''} \colon \frakX_{\Gamma''} \to \bC^{n-4}$
associated with the subdivision $\Gamma''$
given by common diagonals in $\Gamma$ and $\Gamma'$,
and let 
\begin{equation}
  \Psi^0_{\Gamma} = ((\psi^0_{ij})_{\edg(i,j) \in \Prn \Gamma''}, \psi^0_{ac}),
  \quad
  \Psi^0_{\Gamma'} = ((\psi^0_{ij})_{\edg(i,j) \in \Prn \Gamma''}, \psi^0_{bd})
\end{equation}
be the completely integrable systems on the central fiber 
$X_0 = f_{\Gamma''}^{-1}(\bszero)$
obtained by deforming $\Psi_{\Gamma}$ and $\Psi_{\Gamma'}$,
respectively.
Then $\Psi_t$ is deformed into the
completely integrable system 
\begin{equation}
  \Psi_t^0 = ((\psi_{ij}^0)_{\edg(i,j) \in \Prn \Gamma''}, 
  (1-t) (\varphi_{ac}^0)^2 - t (\varphi_{bd}^0)^2 )
\end{equation}
on $X_0$,
where we set 
$\varphi_{ij}^0 = \psi_{ij}^0 - \frac12 \sum_{k=i}^{j-1} \psi_{k, k+1}^0$,
and thus Corollary \ref{cr:toric_deg2} implies that
we may work on $X_0$.
For an open dense subset $Y^{\circ}$ of $Y$ defined by
\begin{equation}
  Y^{\circ} = \{ (x_1, x_2, x_3) \in Y \mid x_3 \ne 0 \}
  \cong \bC^2 \setminus \{ x_1 x_2 = 1 \},
\end{equation}
the isomorphisms given in Corollary \ref{cr:complement} 
and \eqref{eq:complement_Gr(2,4)}
yields
\begin{equation}
  X_0 \setminus D_0 \cong
  Y^{\circ} \times (\bCx)^2 
  \times \bT_{\Gamma'' \setminus \Gamma_0}^{\bC}
  \cong Y^{\circ} \times (\bCx)^{2n-6}
\end{equation}
such that the restriction to  $X_0 \setminus D_0$ 
of the GIT quotient 
$X_{0}^{\mathrm{ss}} \to X_0 \GIT \bT_{\Gamma''}^{\bC}
\cong \bP^1$ 
is identified with the projection
\begin{equation}
  f \colon Y^{\circ} \times (\bCx)^{2n-6} \to Y^{\circ} 
  \to \bC_{x_3}
\end{equation}
to the $x_3$-plane.
Since $\bT_{\Gamma''}^{\bC}$ 
is the complexification of a torus $\bT_{\Gamma''}$ 
generated by Hamiltonian flows of
$(\psi_{ij}^0)_{\edg(i,j) \in \Gamma''}$,
each Lagrangian torus fiber $L^0_t(\bsu)$ of $\Psi^0_t$ 
is mapped by $f$ to a level set of $\varphi_t$
in a complex 1-dimensional polygon space
$X_0 \GIT \bT_{\Gamma''}^{\bC} \cong \Gr(2,4) \GIT \bT_{U(4)}^{\bC}$,
which implies that $L^0_t(\bsu)$ can be deformed into a 
Lagrangian torus of the form 
$T_{r,R} \times T'$ for some $(r,R) \in \bR \times \bR_{>0}$
and a $(2n-6)$-torus $T'$ in $(\bCx)^{2n-6}$. 
We first assume that $T_{r,R}$ is of Clifford type.
From the argument in the previous section, 
the lifts $\tilde\beta_1, \tilde\beta_2 \in \pi_2(X_0, L^0_t(\bsu))$ of
classes $\beta_1$, $\beta_2$ in $Y$ have symplectic areas
\begin{align}
  \ell_1(\bsu) &= u_{ac} + u_{ab} - u_{bc} - \sum_{i=a}^{b-1} u_{i,i+1},\\
  \ell_2(\bsu) &= u_{ac} + u_{ad} - u_{cd} - \sum_{i=a}^{c-1} u_{i,i+1},
\end{align}
which correspond to the triangle inequalities
\begin{align}
  u(b,c) - u(a, b) &\le u(a, c), \\
  u(c,d) - u(a, d) &\le u(a, c),
\end{align}
respectively. 
On the other hand, in the case where $T_{r,R}$ is of Chekanov type,
the symplectic area of $\tilde\beta_3$ is by
\begin{equation}
  \ell_3(\bsu) = -u_{bd} + u_{ad} + u_{cd} - \sum_{i=a}^{b-1} u_{i,i+1},
\end{equation}
which corresponds to
\begin{equation}
  u(b, d) \le u(c, d) + u(a, d).
\end{equation}
Hence the functions 
$z_{\tilde\beta_i}$ are given by
\begin{align}
  z_{\tilde\beta_1} 
    &= \frac{y_{ab}y_{ac}}{y_{bc} \prod_{i=a}^{b-1} y_{i,i+1}}, \\
  z_{\tilde\beta_2} 
    &= \frac{y_{ad}y_{ac}}{y_{cd} \prod_{i=a}^{c-1} y_{i,i+1}}, \\
  z_{\tilde\beta_3} 
    &= \frac{y_{ab}y_{ad}}{y_{bd} \prod_{i=a}^{b-1} y_{i,i+1}}.
\end{align}
Since $z_{\tilde\alpha}$ corresponding to 
$\alpha = \beta_1 - \beta_2$ is given by
\begin{equation}
  z_{\tilde\alpha} = \frac{z_{\tilde\beta_1}}{z_{\tilde\beta_2}}
  = \frac{y_{ab}y_{cd} \prod_{i=b}^{c-1} y_{i,i+1}}{y_{ad}y_{bc}} ,
\end{equation}
the coordinate change \eqref{eq:geom-lift} is equivalent to
the wall-crossing formula \eqref{eq:wallcrossing_Gr},
which complete the proof.

\bibliographystyle{amsalpha}
\bibliography{bibs}


\noindent
Yuichi Nohara \\
Department of Mathematics, 
School of Science and Technology, \\
Meiji University\\
1-1-1 Higashi-Mita, Tama-ku, Kawasaki-shi, 
Kanagawa 214-8571 , Japan\\
{\em e-mail address}\ : \  nohara@meiji.ac.jp
\ \vspace{0mm} \\

\noindent
Kazushi Ueda \\
Graduate School of Mathematical Sciences, 
The University of Tokyo,\\
3-8-1 Komaba,
Meguro-ku,
Tokyo,
153-8914,
Japan\\
{\em e-mail address}\ : \  kazushi@ms.u-tokyo.ac.jp
\ \vspace{0mm} \\

\end{document}